\newfont {\cyr} {wncyr10} \pagestyle{plain}
\newcommand*{\fancyrefthmlabelprefix}{thm}
\newcommand*{\fancyrefequationlabelprefix}{eq}
\newcommand*{\fancyrefsectionlabelprefix}{sec}
\newcommand*{\fancyrefnotlabelprefix}{not}
\newcommand*{\fancyrefhyplabelprefix}{hyp}
\newcommand*{\fancyreflemmalabelprefix}{lem}
\newcommand*{\fancyrefdeflabelprefix}{def}
\newcommand*{\fancyrefproplabelprefix}{prop}
\newcommand*{\fancyrefcorlabelprefix}{cor}
\renewcommand*{\fancyreftablabelprefix}{tab}
\renewcommand*{\fancyrefcorlabelprefix}{cor}
\newcommand*{\fancyrefchplabelprefix}{chp}
\newcommand*{\fancyrefclaimlabelprefix}{claim}
\newcommand*{\fancyrefclmlabelprefix}{clm}
 \newtheorem{Def}{Definition}[section]
\newtheorem{lemma}[Def]{Lemma}
\newtheorem{proposition}[Def]{Proposition} 
 \newtheorem{Equ}{}
\newcounter{claim}[Def]
\newtheorem*{theorem*}{Main Theorem}
\newtheorem*{corollary*}{Corollary}
\newtheorem*{notation*}{Notation}
 \def \Aut {\mbox {\rm Aut}} \def \Sym {\mbox {\rm Sym}} \def \Mat {\mbox {\rm Mat}}  \def \Alt {\mbox {\rm Alt}}   \def \Syl {\mbox {\rm Syl}}  \def \Q {\mbox {\rm Q}}\def \Out {\mbox {\rm Out}}\def \ov {\overline}
\def \syl{\mathrm {Syl}}
\def \GammaSL{\Gamma\mathrm{SL}} \def \qedc {$\hfill \blacksquare$\newline}
\def \wt{\widetilde}\def \OO {\mathrm O}
\mathchardef\tnode="020E 
\def\arc{
  \hbox{\kern -0.15em
  \vbox{\hrule width 2.5em height 0.6ex depth -0.5 ex}
  \kern -0.33em}}
\def\darc{
  \rlap{\lower0.2ex\arc}{\raise0.2ex\arc}}
\def\tarc{
 \rlap{\rlap{\lower0.4ex\arc}{\raise0.4ex\arc}}{\arc}}
\def\stroke#1{
  \kern 0.05em
  \rlap\arc{{\textstyle{#1}}\atop\phantom\arc}
  \kern -0.22em}
\def\dstroke#1{
  \kern 0.05em
  \rlap\darc{{\textstyle{#1}}\atop\phantom\darc}
  \kern -0.22em}
\def\centerscript#1{
  \setbox0=\hbox{$\tnode$}
  \hbox to \wd0{\hss$\scriptstyle{#1}$\hss}}
\def\node{
  \def\super{}
  \def\sub{}
  \futurelet\next\dolabellednode}
  \let\sp=^
  \let\sb=_
  \def\dolabellednode{%
    \ifx\next\sb\let\next\getsub
    \else
      \ifx\next\sp\let\next\getsuper
      \else\let\next\donode
      \fi
    \fi
    \next}
  \def\getsub_#1{\def\sub{#1}\futurelet\next\dolabellednode}
\def\getsuper^#1{\def\super{#1}\futurelet\next \dolabellednode}
  \def\donode{%
\rlap{$\mathop{\phantom\tnode}\limits_{\centerscript{\sub}}
    ^{\centerscript{\super}}$}\tnode}
\def\varcdn{
  \kern -0.03em\vbox{\kern -0.5ex
  \hbox to \wd0{\hss\vrule width 0.04em depth 5.8ex\hss}
  \kern -0.3ex
  \hbox{$\tnode$}}}
\def\m24{\node\stroke{c}\node\darc\node\arc\node} \def\tc5{\node\arc\node\arc\node\arc\node\dstroke{\sim}\node}
\def\Co1{\node\arc\node\arc\node\dstroke{\sim}\node} 
\newcommand{\varcdnl}[1]{ 
  \kern -0.03em\vbox{\kern -0.5ex
  \hbox to \wd0{\hss\vrule width 0.04em depth 5.8ex\hss}
  \kern -0.3ex
  \hbox{$\tnode^{#1}$}}}
 \def\m24l{\node^1\stroke{c}\node^2\darc\node^3\arc\node^4}
\def\tc5l{\node^1\arc\node^2\arc\node^3\arc\node^4\dstroke{\sim}\node^5}
\def\U43l{\node^1\darc\node^2\darc\node^3} \def\F2l{\node^1\arc\node^2\arc\node^3\stroke{M_{22}}\node^4}
 \def\Co2l{\node^1\arc\node^2\stroke{M_{22}}\node^3}
\def\J4l{\node^1\arc\node^2\stroke{{\widetilde{M_{22}}}}\node^3}
 \def\sm24{\node^2\arc\node^1\dstroke{\sim}\node^0}
\def\sCo1{\node^4\arc\node^2\arc\node^1\dstroke{\sim}\node^0}
\def\nodef{
  \def\super{}
  \def\sub{}
  \futurelet\next\dolabellednodef}
  \let\sp=^
  \let\sb=_
  \def\dolabellednodef{%
    \ifx\next\sb\let\next\getsubf
    \else
      \ifx\next\sp\let\next\getsuperf
      \else\let\next\donodef
      \fi
    \fi
    \next}
\def\getsubf_#1{\def\sub{#1}\futurelet\next\dolabellednodef}
\def\getsuperf^#1{\def\super{#1}\futurelet\next \dolabellednodef}
  \def\donodef{%
\rlap{$\mathop{\phantom\tnodef}\limits_{\centerscript{\sub}}
    ^{\centerscript{\super}}$}\tnodef}
\def\varcdnf{
  \kern -0.03em\vbox{\kern -0.5ex
  \hbox to \wd0{\hss\vrule width 0.04em depth 5.8ex\hss}
  \kern -0.3ex
  \hbox{$\tnodef$}}}
\newcommand{\e}{\end{document}} 
\def\multdef#1#2{\actdef{#1}{#2}}
\def\actdef#1#2#3{%
   \if #3. \def\next{}%
   \else\expandafter\def\csname #1#3\endcsname{{#2{#3}}}%
        \def\next{\multdef{#1}{#2}}
   \fi%
 \next}
\begin{document}

\title {The Local Structure Theorem: The wreath product case}

 \author{Chris Parker}
  \author{Gernot Stroth}

\address{Chris Parker\\
School of Mathematics\\
University of Birmingham\\
Edgbaston\\
Birmingham B15 2TT\\
United Kingdom} \email{c.w.parker@bham.ac.uk}

\address{Gernot Stroth\\
Institut f\"ur Mathematik\\ Universit\"at Halle - Wittenberg\\
Theordor Lieser Str. 5\\ 06099 Halle\\ Germany}
\email{gernot.stroth@mathematik.uni-halle.de}

\maketitle

\begin{center} Dedicated to the memory of Kay Magaard  \end{center}

\begin{abstract} Groups with a large $p$-subgroup, $p$ a prime,  include almost all of the groups of Lie type in characteristic $p$ and so the study of such groups adds to our understanding of the finite simple groups.  In this article we study a special class of  such groups which appear as wreath product cases of the Local Structure Theorem \cite{stru}.
\end{abstract}

\section{Introduction}
Throughout this article $p$ is a   prime and $G$ is a finite group.
We say that $L\le G$ has \emph{characteristic $p$} if $$C_G(O_p(L))\leq O_p(L).$$For $T$ a non-trivial $p$-subgroup of $G$,
the subgroup $N_G(T)$ is called a \emph{ $p$-local} subgroup of $G$. By definition $G$ has \emph{local characteristic $p$} if all $p$-local subgroups of $G$ have characteristic $p$ and $G$ has \emph{parabolic characteristic $p$} if all $p$-local subgroups containing a Sylow $p$-subgroup of $G$ have characteristic $p$.

A group $K$ is called a \emph{${\mathcal{K}}$-group} if all its composition factors are from the known finite simple groups. So, if $K$ is a simple ${\mathcal{K}}$-group, then $K$ is a cyclic group of prime order, an alternating group, a simple group of Lie type or one of the 26 sporadic simple groups.  A group $G$ is a
\emph{${\mathcal{K}}_p$-group}, provided  all subgroups of all $p$-local subgroups of $G$ are ${\mathcal{K}}$-groups.
This paper is
part of a programme to investigate the structure of certain $\mathcal K_p$-groups. See \cite{ov,stru} for  an overview of the project.

Of fundamental importance to the development of the programme  are large subgroups of $G$:  a
$p$-subgroup $Q$  of $G$ is  \emph{large} if
 \begin{itemize}
 \item[(i)] $C_G(Q) \leq Q$; and
 \item[(ii)]  $N_G(U)
\leq N_G(Q)$ for all  $1 \neq U \leq C_G(Q)$. \end{itemize}
\noindent For example, if $G$ is a simple group of Lie type defined in characteristic $p$,  $S\in \Syl_p(G)$ and $Q=O_p(C_G(Z(S)))$,
then $Q$ is a large subgroup of $G$ unless there is some degeneracy in the Chevalley commutator relations which define $G$. This means that $Q$ is a large subgroup of $G$ unless  $G$ is  one of $\Sp_{2n}(2^k)$, $n\geq 2$, $\F_4(2^k)$ or $\G_2(3^k)$. \medskip

If $Q$ is a large subgroup of $G$, then it is easy to see that $O_p(N_G(Q))$ is also a large $p$-subgroup of $G$. Thus we  also assume that
\begin{itemize}
\item[(iii)] $Q = O_p(N_G(Q))$.
\end{itemize}
One of the consequences of $G$ having a large $p$-subgroup is that $G$ has  parabolic characteristic $p$. In fact any $p$-local subgroup of $G$ containing $Q$ is of characteristic $p$ \cite[Lemma 1.5.5 (e)]{stru}.
Further, if  $Q \le S \in \syl_p(G)$, then $Q$ is weakly closed in $S$ with respect to $G$ ($Q$ is the unique $G$-conjugate of $Q$ in $S$)  \cite[Lemma 1.5.2 (e)]{stru}.
A significant part of the programme described in \cite{ov} aims to determine the groups which possess a large $p$-subgroup. This endeavour extends and generalizes earlier work of Timmesfeld and others in the original proof of the classification theorem where groups with a so-called large extraspecial $2$-subgroup were investigated. The state of play at the moment is that the Local Structure Theorem has been completed and published \cite{stru}. To describe this result we need some further notation.

 For
a finite group $L$,   $Y_L$ denotes  the unique  maximal elementary abelian normal $p$-subgroup of $L$ with
$O_p(L/C_L(Y_L)) = 1$. Such a subgroup exists  \cite[Lemma 2.0.1(a)]{ov}. From now on assume that $G$ is a finite $\mathcal K_p$-group, $S$ a
Sylow $p$-subgroup  of $G$ and  $Q$ a large $p$-subgroup of $G$ with $Q\leq S$ and  $Q=O_p(N_G(Q))$.
We define $$\mathcal L_G(S) = \{ L \le G\mid S \le L, O_p(L) \not= 1, C_G(O_p(L)) \leq O_p(L)\}.$$
Under the assumption that $S$ is contained in at least two maximal $p$-local subgroups, for $L \in \mathcal L_G(S)$ with $L \not\le N_G(Q)$, the Local Structure Theorem provides information about  $L/C_L(Y_L)$ and its action on $Y_L$. Given the Local Structure Theorem there are two cases to treat in order to fully understand groups with a large $p$-subgroup. Either there exists $L  \in \mathcal L_G(S)$ with $Y_L \not\leq Q$ or, for all $L \in \mathcal L_G(S)$, $Y_L \leq Q$. Research  in the first case has just started and, for this situation,  this paper addresses  the wreath product scenario  in the Local Structure Theorem \cite[Theorem A (3)]{stru}. This case is separated from the rest because of the special structure of   $L$ and $Y_L$. This structure allows us to use arguments measuring the size of certain subgroups to reduce to three exceptional configurations and  has a distinct flavour from the remaining cases. For instance, the groups which are examples in the wreath product case typically have $Q$ of class 3 whereas in the more typical cases it has class at most 2. The  configurations in the Local Structure Theorem  which are not in the wreath product case and have $Y_L \not \le Q$ will be examined in a separate publication as there are methods which apply uniformly to cover many possibilities at once.
Contributions to the  $Y_L \leq Q$ for all $L \in \mathcal{L}_G(S)$ are the subject of \cite{PPS}.

  For $L \in \mathcal L_G(S)$ with $Q$ not normal in $L$ we set  $$L^\circ = \langle Q^L \rangle,   \ov   {L} = L/C_L(Y_L)\text{ and } V_L=   [Y_L,L^\circ]$$ and use this notation throughout the paper.
Set $q=p^a$.  We recall from \cite[Remark A.25]{stru} the definition of  a \emph{natural wreath $\SL_2(q)$-module} for the group $X$ with respect to $\mathcal K$:
 suppose that $X$ is a group, $V$ is a faithful $X$-module  and $\mathcal K$ is a non-empty $X$-invariant set of subgroups of $X$. Then $V$ is a \emph{natural $\SL_2(q)$-wreath product module} for $X$ with respect to $\mathcal K$ if and only if
$$V = \bigoplus_{K \in \mathcal K}[V,K]\text{ and } \langle \mathcal K \rangle = \bigtimes_{K \in \mathcal K} K,$$
and, for each $K \in \mathcal K$, $K \cong \SL_2(q)$ and $[V,K]$ is the natural $\SL_2(q)$-module for $K$.

We now  describe the wreath  product case in \cite[Theorem A (3)]{stru}. For $L \in \mathcal L_G(S)$ with $L \not \le N_G(Q)$, $L$ is in the \emph{wreath product case} provided
   \begin{itemize}
   \item there exists a unique $\ov L$-invariant  set $\mathcal K$ of subgroups of $\ov L$ such that $V_L  $ is a natural $\SL_2(q)$-wreath product module for $\ov L$ with respect to $\mathcal K$.
       \item   $\ov  {L^\circ}= O^p( \langle  \mathcal K\rangle)\ov Q$   and $Q$ acts transitively on $\mathcal K$ by conjugation.
\item $Y_L = V_L$ or $p = 2$, $|Y_L : V_L| = 2$, $\ov{L^\circ} \cong \SL_2(4)$ or $\Gamma \SL_2(4)$ and $V_L \not\leq Q$.
   \end{itemize}
    We say that $\ov L$ is \emph{properly wreathed} if $|\mathcal K| > 1$.

There are  overlaps between the wreath product case  and some other divisions  in the Local Structure Theorem.

If   $\ov{L^\circ} \cong \SL_2(q)$ with $V_L = Y_L$,  then this situation can be inserted in the linear case of \cite[Theorem A (1)]{stru} by including  $n=2$ is that case. Suppose  that $|\mathcal {K}| = 2$ and $K \cong \SL_2(2)$. If $\ov{Q}$ is a fours group, then, as $\ov Q$ conjugates $\ov{K_1}$ to $\ov {K_2}$, $$\ov{L^\circ} \cong \Omega^+_4(2) \cong \SL_2(2) \times \SL_2(2)$$ and $Y_L$ is the tensor product module. This is an example in the tensor product case of \cite[Theorem A (6)]{stru}.
 We declare $L$  to be in the  \emph{unambiguous wreath product case} if these two \emph{ambiguous} configurations  do not occur.
The ambiguous cases  will be handled in a more general setting in a forthcoming paper mentioned earlier.

 \begin{theorem*}  Suppose that $p$ is a prime, $G$ is  a finite group, $S$ a Sylow
$p$-subgroup of $G$ and $Q\le S $ is a large $p$-subgroup of $G$ with $Q=O_p(N_G(Q))$.
If there exists $L \in \mathcal L_G(S)$  with $L$ in the unambiguous wreath product case and  $V_L \not \le Q$, then $G \cong \Mat(22)$, $\Aut(\Mat(22))$, $\Sym(8)$, $\Sym(9)$ or $\Alt(10)$.
\end{theorem*}

The proof of this theorem splits into four parts. First, in Section 3, we show that in the properly wreathed case we must have $q=|\mathcal K|=2$ and, as $L$  is unambiguous,  $\ov{S} = \ov{Q}\cong \Dih(8)$ and $\ov{L^\circ} \cong \OO_4^+(2)$. If $|\mathcal K|=1$, we show that $\ov{L^\circ} \cong \Gamma \SL_2(4)$  or $\SL_2(4)$ and $V_L$ is the natural module with $|Y_L:V_L|\le 2$, where, if $\ov{L^\circ} \cong \SL_2(4)$,  $|Y_L : V_L| = 2$ holds. In the following three  sections,  we determine the groups corresponding to these three cases. Finally the Main Theorem  follows by combining Propositions~\ref{prop:unambiguous}, \ref{prop:O4}, \ref{prop:S5-1} and \ref{prop:L24}.
\\\\
In   \cite{PPS} the authors proved that the unambiguous wreath product case does not lead to examples if for all $L \in \mathcal{L}_G(S)$ we have $Y_L \leq Q$, with the additional assumption that $G$ is of local characteristic $p$.  In this paper we do not make the assumption that $G$ is of local characteristic $p$.
\\
\\
In the Local Structure Theorem there is also a possibility that $L \in \mathcal{L}_G(S)$ is of  weak wreath type. Any such group is contained in one, which is of unambiguous wreath type. A corollary of our theorem is

\begin{corollary*} Suppose that $p$ is a prime, $G$ is  a finite group, $S$ a Sylow
$p$-subgroup of $G$ and $Q\le S $ is a large $p$-subgroup of $G$ with $Q=O_p(N_G(Q))$. If  $L \in \mathcal{L}_G(S)$ is of weak wreath product type, then either $G$ is  as in the Main Theorem   or $V_L  \le Q$.
\end{corollary*}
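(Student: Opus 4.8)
The plan is to bootstrap the Corollary from the Main Theorem by passing to a larger member of $\mathcal L_G(S)$. If $G$ is isomorphic to one of $\Mat(22)$, $\Aut(\Mat(22))$, $\Sym(8)$, $\Sym(9)$ or $\Alt(10)$ there is nothing to prove, so assume $G$ is not one of these groups; we must show $V_L \le Q$.

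First I would invoke the fact recorded just above the statement: a member of $\mathcal L_G(S)$ of weak wreath product type is contained in some $L^* \in \mathcal L_G(S)$ of unambiguous wreath product type. Along the way one checks that $L^*$ still has $S \le L^*$ with $Q$ not normal in $L^*$, so that $L^{*\circ} = \langle Q^{L^*}\rangle$, $\ov{L^*} = L^*/C_{L^*}(Y_{L^*})$ and $V_{L^*} = [Y_{L^*}, L^{*\circ}]$ are defined and $L^*$ genuinely lies in the wreath product case. Since $G$ is not as in the Main Theorem, the contrapositive of that theorem applied to $L^*$ yields $V_{L^*} \le Q$. Moreover this puts $L^*$ into the sub-case $Y_{L^*} = V_{L^*}$: in the alternative $p = 2$, $|Y_{L^*} : V_{L^*}| = 2$, $\ov{L^{*\circ}} \cong \SL_2(4)$ or $\Gamma\SL_2(4)$ sub-case the condition $V_{L^*} \not\le Q$ is built into the definition, which would again force $G$ among the listed groups.

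Next I would compare the two configurations. From $L \le L^*$ we get $Q^L \subseteq Q^{L^*}$, hence $L^\circ = \langle Q^L \rangle \le L^{*\circ}$, and $Y_{L^*}$ is a normal elementary abelian $p$-subgroup of $L$. Using the explicit description of how $L$, together with its set $\mathcal K$ of wreath factors, sits inside $L^*$ — so that each factor of $\ov L$ is $\ov S$-conjugate to one of the factors of $\ov{L^*}$, and $K \cong \SL_2(q)$ acts naturally on $[Y_L, K]$ for each $K \in \mathcal K$ — one shows $V_L = [Y_L, L^\circ] \le Y_{L^*}$. Combined with the previous paragraph, $V_L \le Y_{L^*} = V_{L^*} \le Q$, which completes the proof.

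\textbf{Main obstacle.} The crux is precisely the containment $V_L \le Y_{L^*}$. The subgroups $Y_L$ and $Y_{L^*}$ are defined intrinsically inside $L$ and $L^*$ and need not be nested a priori, so the inclusion cannot be read off formally; it has to be extracted from the definition of the weak wreath product type and from the way the unambiguous overgroup $L^*$ is produced from $L$ in the Local Structure Theorem, possibly together with the weak closure of $Q$ in $S$ with respect to $G$. Everything else in the argument is a formal consequence of the Main Theorem.
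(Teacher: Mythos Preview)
The paper gives no explicit proof of the Corollary; it simply records, immediately before the statement, that any $L$ of weak wreath product type is contained in some $L^*\in\mathcal L_G(S)$ of unambiguous wreath type, and treats the Corollary as an immediate consequence of applying the Main Theorem to $L^*$. Your proposal is exactly this intended argument spelled out in detail, including the observation that $V_{L^*}\le Q$ forces $Y_{L^*}=V_{L^*}$; the residual containment $V_L\le Y_{L^*}$ that you flag as the crux is something the paper tacitly assumes to come from the construction of $L^*$ in \cite{stru}, so your identification of it as the only non-formal step is accurate.
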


In addition to the notation already introduced, we will use  the following
\begin{notation*}\label{not:not} For $p$ a prime,  $G$ a group with a large $p$-subgroup $Q= O_p(N_G(Q))$ and $L \in \mathcal{L}_G(S)$, we set $Q_L = O_p(L)$ and assume that $V_L \not \le Q$. Define $D= \langle V_L^{N_G(Q)}\rangle (L\cap N_G(Q)) \in \mathcal L_G(S)$. Furthermore,   set
 $$W = \langle (V_L \cap Q)^{D}\rangle,$$  $$U_L = \langle (W \cap Q_L)^L \rangle$$ and $$Z= C_{V_L}(Q).$$
\end{notation*}

Notice that for $L_0=N_L(S \cap C_L(Y_L))$, we have $L= C_L(Y_L) L_0$ and $C_L(Y_L) \le D$. Further  $$Y_{L_0}= Y_L= \Omega_1(Z(O_p(L_0)))$$ by \cite[Lemma 1.2.4 (i)]{stru}. Since $C_L(Y_L)$ normalizes $Q$, $$L^\circ= \langle Q^L\rangle = \langle Q^{C_L(Y_L)L_0} \rangle=  \langle Q^ { L_0}\rangle = L_0^\circ.$$ Therefore, if $L$ is in the unambiguous wreath product case, then so is  $L_0$. Hence we also  assume that $L= L_0$ and so $$Y_L =\Omega_1(Z(Q_L)).$$

\section{Preliminaries}

In this section we present some lemmas which will be used in the forthcoming sections.

\begin{lemma}\label{lem:splitA6} Suppose that $X$ is a group, $E = O_2(X)$ is elementary abelian of order $16$ and $X/E \cong \Alt(6)$ induces the non-trivial irreducible part of the $6$-point permutation module on $E$. Then $X$ splits over $E$.
\end{lemma}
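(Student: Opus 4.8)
The plan is to show that the extension $1 \to E \to X \to \Alt(6) \to 1$ splits by invoking the cohomological criterion: since $E$ is an elementary abelian $2$-group, the extension splits if and only if the corresponding class in $\H^2(\Alt(6), E)$ vanishes, and here $E$ as an $\Alt(6)$-module is the $4$-dimensional $\mathbb{F}_2$-module obtained as the nontrivial irreducible constituent of the $6$-point permutation module (often called the heart of the permutation module, or the natural $\Alt(6) \cong \SL_2(9)$-related module — though over $\mathbb{F}_2$ it is the $4$-dimensional section of the permutation module). So the whole statement reduces to the cohomological fact $\H^2(\Alt(6), E) = 0$ for this particular module $E$.

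First I would identify $E$ precisely: the $\mathbb{F}_2$-permutation module $M$ on $6$ points has a fixed trivial submodule $\langle (1,1,1,1,1,1)\rangle$ and the sum-zero submodule $M_0$ of dimension $5$; since $6 \equiv 0 \pmod 2$ the trivial submodule lies inside $M_0$, and $E \cong M_0 / \langle \mathbf 1 \rangle$ is the $4$-dimensional irreducible. Then I would compute (or quote from the literature on the cohomology of alternating groups, e.g. the known tables for $\H^*(\Alt(n),-)$ with small coefficient modules, or references such as those used elsewhere in this programme) that $\H^1(\Alt(6), E)$ is $1$-dimensional while $\H^2(\Alt(6), E) = 0$. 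With $\H^2 = 0$ in hand, every extension of $\Alt(6)$ by $E$ with this module structure splits, which is exactly the claim.

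An alternative, more self-contained route avoiding explicit cohomology computations is the following. Work inside $X$ and pick a Sylow $3$-subgroup: $\Alt(6)$ has an element of order $5$, and one can first split off a complement to $E$ in the preimage of a Sylow $5$-subgroup (coprime action, so Maschke/Schur–Zassenhaus applies), then a complement in the preimage of a suitable $\Alt(5) \le \Alt(6)$ — but here the relevant cohomology group $\H^2(\Alt(5), E\!\downarrow) $ must again be controlled, and the natural $\SL_2(4)$-module appears inside $E$ restricted to $\Alt(5)\cong \SL_2(4)$. Gluing complements on an amalgam of two $\Alt(5)$'s generating $\Alt(6)$, using that $\H^1$ is small, would then produce a global complement; however this is essentially re-proving the $\H^2$ vanishing by hand.

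The main obstacle is precisely pinning down $\H^2(\Alt(6), E) = 0$: the module $E$ is self-dual and has nonzero $\H^1$, so one cannot argue by a dimension-shifting triviality, and one must either cite a reliable cohomology computation for $\Alt(6)$ acting on its $4$-dimensional $\mathbb{F}_2$-module or carry out the amalgam/gluing argument carefully, tracking the one-dimensional $\H^1$ to see that it does not obstruct assembling the local complements into a global one. Once that vanishing is established, the conclusion that $X$ splits over $E$ is immediate.
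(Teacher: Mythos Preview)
Your cohomological reduction is valid: the extension splits iff the class in $\H^2(\Alt(6),E)$ vanishes, and that group is indeed zero for this module. But this is a different route from the paper, and as you yourself note, it leaves the actual work to an external computation or citation. The paper instead argues directly and self-containedly via Gasch\"utz: it suffices to find a complement to $E$ in a Sylow $2$-subgroup of $X$. To do this, the paper passes to the preimage $R$ of a centre-free $\Sym(4)$ in $X/E$, uses the fixed-point-free action of a Sylow $3$-subgroup $T$ of $R$ on $O_2(R)$ to locate an elementary abelian subgroup $F$ of order $16$ with $F\neq E$, shows $F$ is $N_R(T)$-invariant, and then extracts from $F$ a $\Dih(8)$ complementing $E$ in a Sylow $2$-subgroup. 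What your approach buys is brevity if one is willing to quote cohomology tables; what the paper's approach buys is an elementary argument with no black boxes, which fits the style of the surrounding local analysis and avoids importing machinery for a single splitting fact.
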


\begin{proof} Choose $R \leq X$ such that $R/E \cong \Sym(4)$ and $Z(R) = 1$. Let $T \in \syl_3(R)$. As $T$ acts fixed-point freely on $O_2(R)$,  $N_R(T) \cong \Sym(3)$ and so there are involutions in $X /E$.  Hence, as $X/E$ has one conjugacy class of involutions, there are involutions in $O_2(R) \setminus E$. Therefore $O_2(R)/ Z(O_2(R))$ is elementary abelian of order $16$.  Now we consider $O_2(R)$. The fixed-point free action of $T$ on  $O_2(R)/Z(O_2(R))$  implies there is partition of this group into five $T$-invariant subgroups of order 4.  As $T$ acts fixed-point freely on $O_2(R)$ the preimages of all these fours groups are abelian.
 As there are involutions in $O_2(R) \setminus E$, there is a $T$-invariant fours group $F^* \le O_2(R)/Z(O_2(R))$ with $F^* \ne E/Z(O_2(R))$ and such that the preimage $F$ of $F^*$ is elementary abelian of order 16.
 Now the action of $X$ on $E$ shows that for any involution $i \in R \setminus E$ all involutions in the coset $Ei$ are conjugate to $i$ by an element of $E$. Hence all involutions in $O_2(R) \setminus E$ are in $F$. This shows that $F$ is invariant under $N_R(T)$.

 Again there is a partition of $F$ into five groups of order four invariant under $T$. Let $t$ be an involution in $N_R(T)$. Then $|C_F(t)| = 4$, where $|C_{E \cap F}(t)| = 2$. Hence there is some fours group $F_1 \leq F$, $F_1 \not = E \cap F$ and $C_{F_1}(t) \not= 1$. This shows that $F_1$ is normalized by $t$. Then $F_1\langle t \rangle\cong \Dih(8)$ is a complement to $E$. Using a result  of  Gasch\"utz \cite[Theorem 9.26]{GoLyS2} , $X$ splits over $E$.
\end{proof}

The next  lemma is  well-known.
\begin{lemma}\label{lem:Sym5-modules} Suppose that $X \cong \Sym(5)$, $F_1$ and $F_2$ are fours groups of $X$ with $F_1 \le \Alt(5)$ and $V$ is a non-trivial irreducible $\GF(2)X$-module. Then
\begin{enumerate}
\item $V$ is either the non-trivial irreducible part of the permutation module, which is the same as the natural $\OO^-_4(2)$-module, or $V$ is the natural $\Gamma \L_2(4)$-module.
\item $F_1$ acts quadratically on $V$   if and only if $V$ is the natural $\Gamma \L_2(4)$-module.
\item $F_2$ acts quadratically on $V$   if and only if $V$ is the natural $\OO_4^-(2)$-module.
\end{enumerate}
\end{lemma}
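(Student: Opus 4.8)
The plan is to settle (i) by bookkeeping with the $2$-modular representation theory of $\Sym(5)$, and then to prove (ii) and (iii) by computing the iterated commutator $[V,F,F]$ for a representative of each conjugacy class of fours subgroups $F$ of $X$ on each of the two modules occurring in (i).

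For (i): $X\cong\Sym(5)$ has exactly three classes of $2$-regular elements (those of $1$, of a $3$-cycle and of a $5$-cycle), hence exactly three irreducible $\GF(2)X$-modules. Besides the trivial one there is the heart $\overline{P}$ of the natural permutation module $\GF(2)^5$: it has dimension $4$ (as $5$ is odd), is absolutely irreducible, and carries an up-to-scalars unique non-degenerate $X$-invariant quadratic form; the form has minus type, since $|\OO_4^-(2)|=120=|X|$ whereas $|\OO_4^+(2)|=72$, so $\overline{P}$ is the natural $\OO_4^-(2)$-module (and also the non-trivial irreducible constituent of the permutation module). Secondly there is $\GF(4)^2$, on which $X$ acts faithfully as the subgroup $\SL_2(4)\langle\sigma\rangle$ of $\Gamma\L_2(4)$ ($\sigma$ the Frobenius), and $\GF(2)$-irreducibly because $\SL_2(4)$ is already transitive on the $15$ non-zero vectors; this is the natural $\Gamma\L_2(4)$-module. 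The two modules are non-isomorphic --- e.g.\ $\overline{P}$ is absolutely irreducible while $\GF(4)^2\otimes_{\GF(2)}\GF(4)$ splits over $\Alt(5)$ as the natural $\SL_2(4)$-module plus its Frobenius twist --- so they exhaust the non-trivial irreducible $\GF(2)X$-modules. (Alternatively, quote the modular Atlas of $\Sym(5)$.)

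For (ii) and (iii): $\Sym(5)$ has exactly two classes of fours subgroups, namely the Sylow $2$-subgroups of $\Alt(5)$ (generated by three double transpositions --- the ``$F_1$ type'') and the fours subgroups containing a transposition (generated by two disjoint transpositions --- the ``$F_2$ type'', which are not contained in $\Alt(5)$; this is the intended reading of the hypothesis on $F_2$, as (iii) is false when $F_2\le\Alt(5)$). It is thus enough to test quadraticity ($[V,F,F]=0$) for one representative of each type on each module. On the natural $\Gamma\L_2(4)$-module $\GF(4)^2$: an $F_1$ is conjugate to the full unitriangular subgroup of $\SL_2(4)$, and as the matrices $g-1$ ($g\in F_1$) are strictly upper triangular over $\GF(4)$ their pairwise products vanish, so $F_1$ acts quadratically; for an $F_2=\langle z,g\rangle$ with $z$ a transvection in $\SL_2(4)$ and $g\in\Gamma\L_2(4)\setminus\GL_2(4)$ an involution, quadratic action would force $[V,F_2]=C_V(z)=[V,z]$, which is a $\GF(4)$-line $\ell$, whence $(g-1)V\subseteq\ell$; but since $g$ twists scalars by the Frobenius, $(g-1)(\lambda v)=\lambda^2(g-1)v+(\lambda^2+\lambda)v$, which together with $\ell$ being a $\GF(4)$-subspace forces $V\subseteq\ell$ --- impossible as $\dim_{\GF(2)}\ell=2$ --- so $F_2$ is not quadratic. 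On the natural $\OO_4^-(2)$-module $\overline{P}=\{x\in\GF(2)^5:\sum x_i=0\}$: for $F_2=\langle(12),(34)\rangle$ the subspace $[\overline{P},F_2]=\langle e_1+e_2,\,e_3+e_4\rangle$ is centralized by $F_2$, so $F_2$ is quadratic; for $F_1=\langle(12)(34),(13)(24)\rangle$ one has $e_1+e_2\in[\overline{P},(12)(34)]$ and $[e_1+e_2,(13)(24)]=e_1+e_2+e_3+e_4\neq0$, so $F_1$ is not. Combining these four facts with (i) gives (ii) and (iii).

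The one point that needs genuine care is the non-quadraticity of an $F_2$ on $\GF(4)^2$: there one must combine a dimension count (to identify $[V,F_2]$ with a transvection line) with the interaction between the $\GF(2)$-structure and the Frobenius-semilinear element $g$. Everything else is class-counting or a two-line matrix check. A secondary caveat: the statement is correct only with $F_2\not\le\Alt(5)$, which I take as understood from the context in which the lemma is used.
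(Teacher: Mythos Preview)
The paper gives no proof of this lemma at all: it is simply prefaced with ``The next lemma is well-known'' and stated without argument. Your proof is therefore not being compared against anything in the paper, but it is correct and complete.

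A few brief remarks on your argument. Your count of $2$-regular classes and the resulting identification of the two non-trivial irreducibles is standard and correct; the order argument for the minus type of the form on $\overline P$ is clean. The quadraticity checks on $\overline P$ are immediate, and your verification that the unipotent radical of a Borel in $\SL_2(4)$ acts quadratically on $\GF(4)^2$ is fine. The only step needing real thought is the non-quadraticity of $F_2$ on the $\Gamma\L_2(4)$-module, and your semilinearity computation $(g-1)(\lambda v)=\lambda^2(g-1)v+(\lambda^2+\lambda)v$ together with the observation that $[V,z]$ is a $\GF(4)$-line handles this correctly.

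Your caveat about the implicit hypothesis $F_2\not\le\Alt(5)$ is well taken: the paper's phrasing ``$F_1$ and $F_2$ are fours groups of $X$ with $F_1\le\Alt(5)$'' leaves this tacit, but the intended reading is forced by the way the lemma is applied later (e.g.\ in the proofs of Lemma~5.2 and Proposition~6.2, where the relevant fours group always contains an element inducing a field automorphism and so lies outside $\Alt(5)$).
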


\begin{lemma}\label{lem: cent W/[W,Q]}
Suppose that $p$ is a prime, $X$ is a group of characteristic $p$ and $U$ is a normal $p$-subgroup of $X$.
Let $R$ be a  normal subgroup of $X$ with $R\le C_{X}(U/[U,O_p(X)])$.  If $[O_p(X),O^p(R)]\le U$, then $R\le O_p(X)$.
\end{lemma}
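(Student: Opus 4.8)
The plan is to reduce everything to showing $[O_p(X),O^p(R)]=1$. Write $P=O_p(X)$ and $Y=O^p(R)$; note that $Y\le R$ is normal in $X$, that $U\le P$ because $U$ is a normal $p$-subgroup, that $O^p(Y)=Y$, and that $[U,Y]\le[U,R]\le[U,P]$ since $R\le C_X(U/[U,P])$. If I can show $[P,Y]=1$, then $Y\le C_X(O_p(X))\le O_p(X)=P$ since $X$ has characteristic $p$; and then $R\le P$, because $R/(R\cap P)$ is a $p$-group (a quotient of $R/Y$), so the normal $p$-subgroup $RP/P$ of $X/P$ is trivial. Thus it remains to prove $[P,Y]=1$ from the data $[P,Y]\le U$ and $[U,Y]\le[U,P]$.

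The crucial step is an ``improvement'' principle: \emph{if $M\le P$ is normal in $X$ with $[P,Y]\le M$ and $[M,Y]\le[M,P]$, then $[P,Y]\le[M,P]$.} To prove it, pass to $\overline X=X/[M,P]$. Since $M\le P$, the image $\overline M$ is an abelian subgroup of $Z(\overline P)$, while $[\overline M,\overline Y]=1$ and $[\overline P,\overline Y]\le\overline M$. Consequently the commutator map $\overline P\times\overline Y\to\overline M$, $(\overline x,\overline y)\mapsto[\overline x,\overline y]$, is a homomorphism in each variable separately (linearity in $\overline x$ uses $[\overline x,\overline y]\in Z(\overline P)$; linearity in $\overline y$ uses $[\overline M,\overline Y]=1$), so it induces a homomorphism from $\overline Y$ into the abelian $p$-group $\Hom(\overline P,\overline M)$. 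Its image is a $p$-group quotient of $Y=O^p(Y)$, hence trivial, so $[\overline P,\overline Y]=1$, i.e. $[P,Y]\le[M,P]$.

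Now iterate down the lower $P$-central chain of $U$. Set $U_0=U$ and $U_{i+1}=[U_i,P]$; these are normal $p$-subgroups of $X$ contained in $P$, and $U_c=1$ once $c$ is the nilpotency class of $P$. I claim, by induction on $i$, that $[P,Y]\le U_i$ and $[U_i,Y]\le U_{i+1}$; for $i=0$ this is the hypothesis together with $[U,Y]\le[U,P]$. For the step, the improvement principle with $M=U_i$ gives $[P,Y]\le[U_i,P]=U_{i+1}$, and then the Three Subgroups Lemma applied to $U_i$, $P$, $Y$ modulo $U_{i+2}$ gives $[U_{i+1},Y]=[[U_i,P],Y]\le U_{i+2}$: indeed $[[P,Y],U_i]\le[U_{i+1},U_i]\le[U_{i+1},P]=U_{i+2}$ (here $U_i\le P$) and $[[Y,U_i],P]\le[U_{i+1},P]=U_{i+2}$. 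Taking $i=c$ yields $[P,Y]\le U_c=1$, and the reduction above finishes the proof.

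The only genuinely non-routine point is the improvement principle: it is exactly where the three hypotheses interact --- $[O_p(X),O^p(R)]\le U$, $O^p(R)$ centralising $U/[U,O_p(X)]$, and $U\le O_p(X)$ --- the decisive leverage being that $O^p(R)$ admits no non-trivial $p$-group as a homomorphic image. Everything after that is commutator bookkeeping via the Three Subgroups Lemma.
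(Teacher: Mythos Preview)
Your proof is correct and follows essentially the same route as the paper's: reduce to showing $[O_p(X),O^p(R)]=1$, then push $[P,Y]$ down the lower $P$-central series $U=U_0\ge U_1\ge\cdots$ of $U$ using the Three Subgroups Lemma and nilpotency of $P$. The only cosmetic difference is in how the key step is packaged: the paper repeatedly invokes the standard identity $[A,Y]=[A,Y,Y]$ for $Y=O^p(Y)$ acting on a $p$-group $A$ (so $[P,Y]=[P,Y,Y]\le[U,Y]$ and $[U,Y]=[U,Y,Y]\le[[U,P],Y]$), whereas you isolate and prove an equivalent ``improvement principle'' via the bilinear commutator map into $\Hom(\overline P,\overline M)$; both amount to the observation that $O^p(Y)$ has no non-trivial $p$-quotient, hence stabilising a two-step $p$-chain forces trivial action.
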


\begin{proof}  It suffices to prove that $O^p(R)=1$. Suppose that $n \ge 1$ is such that  $[U,O^p(R)]\le [U,O_p(X);n]$. Then $$[O_p(X),O^p(R)] = [O_p(X),O^p(R),O^p(R)] \le [U,O^p(R)] \le [U,O_p(X);n]$$
  and so $$[O_p(X),O^p(R),U]\le [[U,O_p(X);n],O_p(X)]=[U,O_p(X);n+1].$$
 We also have $$[U,O^p(R),O_p(X)] \le [[U,O_p(X);n],O_p(X)]=[U,O_p(X);n+1]$$ and thus  the Three Subgroups Lemma implies   $$[U,O_p(X),O^p(R)]\le [U,O_p(X);n+1].$$
This yields $$[U,O^p(R)]=[U,O^p(R),O^p(R)]\le [U,O_p(X), O^p(R)]\le [U,O_p(X);n+1].$$  Since $O_p(X)$ is nilpotent, we deduce $[U,O^p(R)]=1$. Hence  $$[O_p(X),O^p(R)] =[O_p(X),O^p(R),O^p(R)]\le  [U,O^p(R)]=1.$$ As  $X$ has characteristic $p$, $O^p(R)=1$ and so $R\le O_p(X)$ as claimed.
\end{proof}

\begin{lemma}\label{lem:abstracting 1.4} Assume that $X$ is a group, $Y$ is a normal subgroup of $X$ and $x C_X(Y)\in Z(X/C_X(Y))$.  If $[Y,x] \le Z(Y)$, then $Y/C_Y(x) \cong [Y,x]$ as $X$-groups.
\end{lemma}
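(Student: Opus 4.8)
The plan is to realize the claimed isomorphism explicitly via the commutator map $\phi\colon Y\to [Y,x]$, $y\mapsto [y,x]$, and to check that $\phi$ is a surjective homomorphism with kernel $C_Y(x)$ which, in addition, intertwines the conjugation actions of $X$ on $Y/C_Y(x)$ and on $[Y,x]$.

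First I would verify that $\phi$ is a group homomorphism. Since $[Y,x]\le Z(Y)$, the general identity $[y_1y_2,x]=[y_1,x]^{y_2}[y_2,x]$ simplifies to $[y_1y_2,x]=[y_1,x][y_2,x]$ for all $y_1,y_2\in Y$, because $[y_1,x]\in Z(Y)$ and $y_2\in Y$. So $\phi$ is a homomorphism; its image is $\langle [y,x]\mid y\in Y\rangle=[Y,x]$ and its kernel is precisely $C_Y(x)$, whence $Y/C_Y(x)\cong [Y,x]$ as abstract groups.

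Next I would record the $X$-module structure on both sides. The hypothesis $xC_X(Y)\in Z(X/C_X(Y))$ means that for every $g\in X$ one has $x^g\in xC_X(Y)$, say $x^g=xc$ with $c\in C_X(Y)$. Since $c$ centralizes $Y$ and $Y$ is normal in $X$, we get $y^{x^g}=(y^x)^c=y^x$ for all $y\in Y$, hence $C_Y(x^g)=C_Y(x)$; as $C_Y(x)^g=C_{Y^g}(x^g)=C_Y(x^g)$, the subgroup $C_Y(x)$ is normal in $X$. Similarly $[Y,x^g]=[Y,x]$, so $[Y,x]$ is normal in $X$. Thus $X$ acts by conjugation on both $Y/C_Y(x)$ and $[Y,x]$.

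Finally I would check equivariance. For $y\in Y$, $g\in X$, writing $x^g=xc$ with $c\in C_X(Y)$ and using $[y^g,xc]=[y^g,c]\,[y^g,x]^c$ together with $[y^g,c]=1$ (as $c$ centralizes $Y\ni y^g$) and $[y^g,x]^c=[y^g,x]$ (as $[y^g,x]\in[Y,x]\le Z(Y)\le Y$ and $c$ centralizes $Y$), we obtain $\phi(y)^g=[y,x]^g=[y^g,x^g]=[y^g,x]=\phi(y^g)$. Hence $\phi$ descends to an $X$-isomorphism $Y/C_Y(x)\to[Y,x]$, as required. I do not anticipate any real obstacle: the only genuine use of the hypothesis is to make the $X$-action on $Y/C_Y(x)$ well defined and $\phi$ equivariant, and the only point needing care is the left/right bookkeeping in $x^g=xc$ and the ensuing commutator manipulation.
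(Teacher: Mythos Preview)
Your proof is correct and follows essentially the same approach as the paper's: both define the commutator map $y\mapsto [y,x]$, verify it is a homomorphism using $[Y,x]\le Z(Y)$, and check $X$-equivariance via $x^g=xc$ with $c\in C_X(Y)$. Your treatment is slightly more explicit in verifying that $C_Y(x)$ and $[Y,x]$ are $X$-invariant before checking equivariance, but the substance is identical.
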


\begin{proof} Define \begin{eqnarray*}\theta :Y &\rightarrow &[Y,x]\\y &\mapsto& [y,x].\end{eqnarray*} Then $\theta$ is independent of the choice of  the coset representative in $xC_X(Y)$.

 For $y,z \in Y$, $$(yz)\theta=[yz,x]=[y,x]^{z} [z,x]=[y,x] [z,x]=(y)\theta (z)\theta,$$ and, for $y\in Y$ and $\ell  \in X$, as $[x,\ell]\in C_R(Y)$, $x^\ell= xc$ for some $c \in C_X(Y)$, and so  $$(y\theta)^\ell = [y,x]^\ell = [y^\ell, x^\ell]= [y^\ell,xc]=[y^\ell,c][y^\ell,x]^c=[y^\ell,x]=(y^\ell)\theta.$$  Thus $\theta$ is an $X$-invariant homomorphism from $Y$ to $[Y,x]$.  As $\ker\theta= C_{Y}(x)$, we have $Y/C_Y(x) \cong [Y,x]$ as $X$-groups.
\end{proof}

\begin{lemma}\label{lem:isochieffactors} Assume that $p$ is a prime, $X$ is a group, $Y$ is an abelian normal $p$-subgroup of $X$ and $R$ is a normal $p$-subgroup of $X$ which contains $Y$.  Suppose that $Y= [Y,O^p(X)]$, $[R,O^p(X)]\le C_R(Y)$ and $R$ acts quadratically or trivially on $Y$. Suppose that no non-central $X$-chief factor of $Y/C_Y(R)$ is isomorphic to an $X$-chief factor of $[Y,R]$.   Then $Y \le Z(R)$.
\end{lemma}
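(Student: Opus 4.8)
The plan is to iterate the map $y\mapsto[y,R]$ (or rather to study the descending chain $C_Y(R)\ge [Y,R]\ge [Y,R,R]\ge\cdots$) and use the chief-factor hypothesis to force it to terminate quickly. Since $R$ acts quadratically or trivially on $Y$, we have $[Y,R,R]=1$, so $[Y,R]\le C_Y(R)$ and the commutator map induces, for each $r\in R$, a homomorphism $Y/C_Y(r)\to[Y,r]$; more usefully, because $R$ is abelian modulo $C_R(Y)$ on $Y$ (quadratic action gives $[R,R]\le C_R(Y)$), the map $\psi\colon Y\to \Hom(R/C_R(Y)\,,\,[Y,R])$ or, packaged differently, the commutator pairing $Y/C_Y(R)\times R/C_R(Y)\to[Y,R]$ is $X$-bilinear. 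The point is that this exhibits $Y/C_Y(R)$ as built out of sections of $[Y,R]$: concretely, fixing generators, $Y/C_Y(R)$ embeds $X$-equivariantly into a direct sum of copies of $[Y,R]$ (via $y\mapsto([y,r_i])_i$ for $r_i$ running over a set of $X$-conjugates generating $R$ modulo $C_R(Y)$), so every $X$-chief factor of $Y/C_Y(R)$ is isomorphic to an $X$-chief factor of $[Y,R]$.

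Next I would bring in the hypothesis $Y=[Y,O^p(X)]$ together with $[R,O^p(X)]\le C_R(Y)$. The first says $Y$ has no central $X$-chief factors at all (any such would lie in $C_Y(O^p(X))$, contradicting $Y=[Y,O^p(X)]$ unless it is trivial — more precisely $[Y,O^p(X)]=Y$ forces coprime-style behaviour on the semisimple quotient, so $Y/\Phi$-type sections are sums of non-central chief factors). Hence \emph{every} chief factor of $Y/C_Y(R)$ is non-central. Combining with the previous paragraph: every $X$-chief factor of $Y/C_Y(R)$ is a non-central $X$-chief factor of $Y/C_Y(R)$ which is isomorphic to an $X$-chief factor of $[Y,R]$ — exactly the configuration excluded by hypothesis, unless $Y/C_Y(R)=1$. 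Therefore $Y=C_Y(R)$, i.e. $[Y,R]=1$.

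Finally, $[Y,R]=1$ means $Y\le C_R(Y)=Z(R)\cap\dots$ — wait, it gives $R\le C_X(Y)$ acting trivially, hence $Y\le Z(R)$ once we know $Y\le R$, which is assumed. So $Y\le Z(R)$ as claimed. The main obstacle I anticipate is the first paragraph: making the embedding $Y/C_Y(R)\hookrightarrow \bigoplus[Y,R]$ genuinely $X$-equivariant. One must choose the $r_i$ so that the tuple $(r_i C_R(Y))$ is permuted (up to the action on $R/C_R(Y)$) by $X$, and then use Lemma~\ref{lem:abstracting 1.4}-style reasoning for each coordinate together with the fact that $X$ permutes the coordinates; the non-centrality of chief factors of $Y/C_Y(R)$ is what converts "isomorphic to a section of a permutation-twisted sum of $[Y,R]$" into "isomorphic to an actual chief factor of $[Y,R]$". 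If that equivariance is fiddly, the alternative is to argue by minimal counterexample: take $Y$ minimal, reduce to $C_Y(R)$ of index a single chief factor $\overline Y$, apply Lemma~\ref{lem:abstracting 1.4} to get $\overline Y\cong[Y,x]\le[Y,R]$ as $X$-modules for a suitable $x$ lying in $Z(X/C_X(Y))\cap R$-coset, and quote the hypothesis to get a contradiction.
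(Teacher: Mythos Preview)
Your alternative approach at the very end is essentially what the paper does, and in fact the paper's version is cleaner: there is no need for a minimal counterexample or for reducing to the case where $Y/C_Y(R)$ is a single chief factor. One simply picks $x\in R\setminus C_R(Y)$ with $xC_X(Y)\in Z(X/C_X(Y))^\#$ (such $x$ exists because $[R,O^p(X)]\le C_R(Y)$ forces $O^p(X)$ to centralize the normal $p$-subgroup $RC_X(Y)/C_X(Y)$, so $X$ acts on it as a $p$-group), applies Lemma~\ref{lem:abstracting 1.4} to obtain the $X$-isomorphism $Y/C_Y(x)\cong[Y,x]$, and then uses the chain $C_Y(x)\ge C_Y(R)\ge[Y,R]\ge[Y,x]$ coming from quadratic action. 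This chain shows that any non-central $X$-chief factor of $Y/C_Y(x)$ is simultaneously a non-central chief factor of $Y/C_Y(R)$ and (via the isomorphism) a chief factor of $[Y,R]$, which the hypothesis forbids; hence $Y/C_Y(x)$ has only central chief factors, so $Y=[Y,O^p(X)]\le C_Y(x)$, contradicting the choice of $x$.

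Your main approach has two genuine issues. First, the map $y\mapsto([y,r_i])_i$ into $\bigoplus[Y,R]$ is \emph{not} $X$-equivariant for the diagonal action on the target: conjugation by $X$ moves the $r_i$ around inside $R/C_R(Y)$, not merely permuting them. The honest target is $\Hom(R/C_R(Y),[Y,R])$, and one then needs the extra observation that $R/C_R(Y)$ has only trivial $X$-chief factors (again from $[R,O^p(X)]\le C_R(Y)$) to conclude that the Hom space has the same chief factors as $[Y,R]$. Second, your assertion that ``$Y=[Y,O^p(X)]$ says $Y$ has no central $X$-chief factors at all'' is false in general (e.g.\ the sum-zero submodule of the $\GF(2)$-permutation module for $\Alt(6)$ on six points). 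What \emph{is} true, and is all that is needed, is that no non-trivial \emph{quotient} of $Y$ can have only central chief factors; so once you know every chief factor of $Y/C_Y(R)$ is isomorphic to one of $[Y,R]$, the hypothesis forces all of them to be central, whence $Y=C_Y(R)$. With these two fixes your global argument goes through, but the paper's single-$x$ argument avoids both the Hom-space detour and the filtration analysis.
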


\begin{proof}  Assume that $R>C_R(Y)$. Using $[R,O^p(X)]\le C_R(Y)$, we may select $x \in R \setminus C_R(Y)$   such that $xC_X(Y) \in Z(X/C_X(Y))^\#$. As $Y$ is abelian, $[Y,x]\le Z(Y)$ and so \fref{lem:abstracting 1.4} applies to give $Y/C_Y(x) \cong [Y,x]$ as $X$-groups. As $R$ acts quadratically on $Y$, $$C_Y(x) \ge C_Y(R) \ge [Y,R] \ge [Y,x]$$ and so the hypothesis on non-central $X$-chief factors now gives $Y/C_Y(x)$ and $[Y,x]$ only have central $X$-chief factors. In particular, $Y= [Y,O^p(X)] \le C_Y(x)$ and this contradicts the initial choice of $x \in R \setminus C_R(Y)$. Hence $Y \le Z(R)$.
\end{proof}

\begin{lemma}\label{lem: U>YL abelian} Suppose that $p$ is a prime, $X$ is a group, $V \le U$  are  normal $p$-subgroups  of $X$, and $Q$ is a large $p$-subgroup of $X$ which is not normal in $X$.
Assume that $V$ is a non-trivial irreducible $\GF(p)X$-module and  $U/V $ is centralized by $O^p(X)$.Then \begin{enumerate}
\item $U$ is elementary abelian; and
\item if $U \not \le \Omega_1(Z(O_p(X)))$, then $O_p(X)/C_{O_p(X)}(U)$ contains a non-central chief factor isomorphic to $V$ as a $\GF(p)X$-module.
    \end{enumerate}
\end{lemma}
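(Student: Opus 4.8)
Throughout set $P:=O_p(X)$; since $1\ne V\le P$ we have $P\ne 1$, and $V$ is a minimal normal subgroup of $X$, every non-trivial $X$-invariant subgroup of $V$ being equal to $V$ by irreducibility. I record four routine observations. (a) $[V,O^p(X)]=V$: otherwise minimality forces $[V,O^p(X)]=1$, so $X/C_X(V)$ is a $p$-group acting faithfully and irreducibly on the $\GF(p)$-space $V$, whence $V$ is trivial. (b) Hence $V=[V,O^p(X)]\le[U,O^p(X)]\le V$, so $[U,O^p(X)]=V$; consequently every $X$-chief factor of the group $U/V$ is $O^p(X)$-trivial, hence, being an irreducible $\GF(p)$-module acted on by the $p$-group $X/O^p(X)C_X(\,\cdot\,)$, is a trivial $X$-module. (c) $[V,Q]\ne 1$: otherwise $V\le C_X(Q)\le Q$, and condition (ii) in the definition of a large subgroup gives $X=N_X(V)\le N_X(Q)$, contradicting $Q\not\trianglelefteq X$. (d) $V\trianglelefteq U$, so $1\ne V\cap Z(U)$ and thus $V\le Z(U)$ by minimality.

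\emph{Part (1).} The plan is to show $\Phi(U)\le V$ --- equivalently, that $U/V$ is elementary abelian --- and then to exclude $\Phi(U)=V$. Observe that $U\cap O^p(X)\trianglelefteq X$ contains $V$ and satisfies $[U,U\cap O^p(X)]\le[U,O^p(X)]=V$, so any failure of $U$ to be abelian is carried by the section $U/(U\cap O^p(X))$, which embeds in the $p$-group $X/O^p(X)$. The heart of the matter is that, by observation (b), $X$ acts on $U/V$ with all chief factors trivial, so $U/V$ is hypercentrally embedded in $X/C_X(U/V)$; the plan is to combine this with the large subgroup --- exploiting $[V,Q]\ne 1$, $C_U(Q)\ne 1$, and the fact that $Q$ controls the normaliser of every non-trivial subgroup of $C_X(Q)$, in the spirit of the Gasch\"utz-type splitting argument used for \fref{lem:splitA6} --- to force $[U,U]\le V$ and all $p$-th powers of elements of $U$ into $V$, i.e. $\Phi(U)\le V$. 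Granting this, $\Phi(U)$ is an $X$-invariant subgroup of the irreducible module $V$, so $\Phi(U)\in\{1,V\}$; and if $\Phi(U)=V$ then $O^p(X)$ centralises the $\GF(p)X$-module $U/\Phi(U)=U/V$, so $X$ acts on it through a $p$-group and has a non-trivial fixed vector, whose preimage $U_0$ with $V\le U_0<U$ is $X$-invariant with $[U_0,O^p(X)]\le V$; descending a chief series in this way contradicts $[U,O^p(X)]=V$ unless $U=V$, and then $U=V$ is already elementary abelian. I expect this to be the main obstacle.

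\emph{Part (2).} Suppose $U\not\le\Omega_1(Z(P))$. By Part (1), $U$ is elementary abelian, so this says precisely $[U,P]\ne 1$. Refining $U\ge V\ge 1$ and invoking observation (b), every $X$-chief factor of $U$ is isomorphic to $V$ or is a trivial module; it therefore suffices to show $[V,P]\ne 1$, for then $[V,P]=V$ by minimality, $P$ does not centralise this chief factor, it is isomorphic to $V$ as a $\GF(p)X$-module, and, since $C_P(U)\le C_P(V)$, it survives as a non-central chief factor of $O_p(X)/C_{O_p(X)}(U)$. So assume $[V,P]=1$, i.e. $V\le Z(P)$. From $[V,P]=1$ and $[U,O^p(X)]=V\le Z(P)$, the three-subgroups and descending-commutator argument used in the proof of \fref{lem: cent W/[W,Q]} shows that $P$ acts on $U$ with $[U,P;n]=1$ for all large $n$; to improve this to $[U,P]=1$ I would choose $x\in P$ with $xC_X(U)\in Z(X/C_X(U))^{\#}$ --- here $[V,P]=1$ yields $[[P,O^p(X)],U]\le V$, and, after passing to $X/C_X(U)$, the image of $Q$ is used to produce the required central element --- and apply \fref{lem:abstracting 1.4} with $Y=U$, the hypothesis $[U,x]\le Z(U)$ being automatic since $U$ is elementary abelian. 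This gives $U/C_U(x)\cong[U,x]$ as $X$-groups, with $[U,x]$ an $X$-invariant subgroup of $V$, hence $[U,x]\in\{1,V\}$. But $V\le C_U(x)$ because $[V,P]=1$, so $U/C_U(x)$ is a quotient of $U/V$ and has only trivial $X$-chief factors, while $V$ is non-trivial; therefore $[U,x]=1$, contradicting $x\notin C_X(U)$. Hence $[V,P]\ne 1$, which proves Part (2).

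In summary, the whole difficulty is concentrated in Part (1), and within it in the claim that $U/V$ is elementary abelian; Part (2) is then a comparatively soft chief-factor computation relying on \fref{lem:abstracting 1.4} and \fref{lem: cent W/[W,Q]}.
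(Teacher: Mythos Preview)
Your attempt has a genuine gap in Part~(i) and a fundamental misreading in Part~(ii); contrary to your summary, Part~(ii) is where your approach fails most seriously.

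\emph{Part~(i).} You never prove $\Phi(U)\le V$; writing ``Granting this'' is not a proof, and the Gasch\"utz-type hint is not relevant here. Your exclusion of $\Phi(U)=V$ is also broken: finding $U_0$ with $V\le U_0<U$ and $[U_0,O^p(X)]\le V$ cannot contradict $[U,O^p(X)]=V$, since the hypothesis already gives $[U,O^p(X)]\le V$. The paper avoids the detour entirely. By Burnside, if $O^p(X)$ centralised $U/\Phi(U)$ it would centralise $U$, contradicting $[V,O^p(X)]=V\ne1$; hence $V\not\le\Phi(U)$, so $V\cap\Phi(U)=1$ and thus $[\Phi(U),O^p(X)]\le V\cap\Phi(U)=1$. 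Then $\Phi(U)\cap Z_X$ (with $Z_X=\Omega_1(Z(O_p(X)))$) is normal in $X$ and centralised by $O^p(X)$; were it non-trivial, its $Q$-fixed points would lie in $C_X(Q)$ and be normalised by $O^p(X)$, forcing $O^p(X)\le N_X(Q)$ and hence $Q\trianglelefteq X$. So $\Phi(U)\cap Z_X=1$, and since $\Phi(U)\trianglelefteq X$ is a $p$-group it lies in $O_p(X)$ and, if non-trivial, meets $Z_X$; thus $\Phi(U)=1$.

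\emph{Part~(ii).} Your strategy is to show $[V,P]\ne1$, but this is \emph{always false}: $V$ is a minimal normal $p$-subgroup of $X$, hence $V\le\Omega_1(Z(P))$ and $[V,P]=1$ unconditionally. Your claim that $V$ ``survives as a non-central chief factor of $P/C_P(U)$'' is equally impossible: by Part~(i) $U$ is abelian, so $V\le U\le C_P(U)$ and $V$ is killed in $P/C_P(U)$. The statement asks for a chief factor \emph{in the section} $P/C_P(U)$, not for $V$ as a subgroup of $U$. The paper applies \fref{lem:abstracting 1.4} with $Y=P$, not $Y=U$: choose $x\in U\setminus Z_X$ with $[X,x]\le U\cap Z_X$ (possible since $O^p(X)$ centralises $U/(U\cap Z_X)$, so $X$ acts on that quotient through a $p$-group and fixes a non-trivial vector). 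Then $[P,x]\le Z(P)$ and $xC_X(P)\in Z(X/C_X(P))$, so $P/C_P(x)\cong[P,x]$ as $X$-groups, with $[P,x]$ an $X$-invariant subgroup of $U$. The large-subgroup argument (as in Part~(i)) now shows $O^p(X)$ does not centralise the non-trivial subgroup $[P,x]$, whence $1\ne[[P,x],O^p(X)]\le V\cap[P,x]$ and, by irreducibility, $V\le[P,x]$. Since $C_P(U)\le C_P(x)$, the quotient $P/C_P(U)$ maps onto $P/C_P(x)\cong[P,x]$, which therefore contains a non-central chief factor isomorphic to $V$.
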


\begin{proof} Set $Z_X = \Omega_1(Z(O_p(X)))$. We have $[U,O^p(X)]\le V\le Z_X$ as $V$ is irreducible. As $O^p(X)$ does not centralize $U/\Phi(U)$ by Burnside's Lemma  \cite[Proposition 11.1]{GoLyS2} and $V$ is a non-trivial irreducible $X$-module, $V \not \le \Phi(U)$ and $\Phi(U)$ is centralized by $O^p(X)$.  Therefore $\Phi(U) \cap Z_X$ is centralized by $O^p(X)$ and is normalized by $Q$. Since $Q$ is large and $O^p(X) \not \le N_X(Q)$, we deduce $\Phi(U)\cap Z_X=1$. Thus $\Phi(U)=1$ and so $U$ is elementary abelian. Hence   (i) holds.

Set $Y=O_p(X)$ and assume that $U \not \le Z_X$. Select $x \in U\setminus Z_X$ such that $[X,x]\le  U\cap Z_X \le Z(Y)$.
Then $xC_X(Y) \in Z(X/C_X(Y))$.  Thus \fref{lem:abstracting 1.4} implies
$Y/C_{Y}(x) \cong [ Y,x]\le U \cap Z_X$ and this isomorphism is as $X$-groups.
 Since $[ Y,x]$  is normalized by $Q$,  $[Y,x] \ne 1$ and $Q$ is large, $O^p(X)$ does not centralize $[Y,x]$.  Thus $ [Y,x] \ge V$ as $[U,O^p(X)] \leq V$.  This proves (ii).
\end{proof}

\begin{lemma}\label{lem:dualchieffactors}
Assume that $p$ is a prime, $X$ is a group, $U$ is an elementary abelian normal subgroup of $X$, $U= [U,O^p(X)]$ and $O_p(X)$ acts quadratically and non-trivially on $U$. Set $R= O_p(X)$, $W= R/C_R(U)$, and $Z= [U,R]$.  Then $W$, $U/Z$ and $Z$ are $X/R$-modules and $W$ is isomorphic to an $X/R$-submodule of $\Hom(U/Z,Z)$.  In particular, if $Z$ is centralized by $X$, then  the set of $X$-chief factors of $W$ can be identified with a subset of the $\GF(p)$-duals of the $X$-chief factors of $U/Z$.
\end{lemma}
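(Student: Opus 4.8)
The plan is to exhibit an explicit $X/R$-linear map $W \to \Hom(U/Z, Z)$ and show it is injective, after which all the stated consequences are formal. First I would fix the natural setup: since $R$ acts quadratically on $U$, for every $r \in R$ we have $[U,r,r] = 1$, equivalently $[U,r] \le C_U(r)$, and moreover $[U,r] \le [U,R] = Z$. Thus for each $r$ the map $u \mapsto [u,r]$ sends $U$ into $Z$ and kills $Z$ (again by quadratic action, since $Z = [U,R] \le C_U(R)$), so it descends to a homomorphism $\varphi_r : U/Z \to Z$. I would then define $\Psi : R \to \Hom(U/Z, Z)$ by $r \mapsto \varphi_r$. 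Because $U$ is abelian, the commutator identity $[u, rs] = [u,s][u,r]^s = [u,s][u,r]$ (the last equality since $[u,r] \in Z \le C_U(R)$) shows $\Psi$ is a homomorphism of groups. Its kernel is exactly $\{ r \in R : [U,r] = 1\} = C_R(U)$, so $\Psi$ induces an injective homomorphism $W = R/C_R(U) \hookrightarrow \Hom(U/Z, Z)$.

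Next I would check $X/R$-equivariance, where $X/R$ acts on $W$ by conjugation and on $\Hom(U/Z,Z)$ in the usual way (via its actions on $U/Z$ and on $Z$, both of which make sense since $U$, $Z = [U,R]$ and $R$ are all normal in $X$, so $U/Z$ and $Z$ are $X/R$-modules — here I use that $R$ centralizes $U/Z$: indeed $[U,R] = Z$). For $g \in X$ and $r \in R$, one computes $[u, r^g] = [u^{g^{-1}}, r]^g$, so $\varphi_{r^g}(\bar u) = \varphi_r(\overline{u^{g^{-1}}})^g$, which is precisely the statement that $\Psi(r^g) = g^{-1}\cdot \Psi(r)$ in $\Hom(U/Z,Z)$ under the contragredient-type action; tracking the conventions, this says $\Psi$ intertwines the conjugation action on $R$ with the $X$-action on $\Hom(U/Z,Z)$. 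Hence $W$ is isomorphic to an $X/R$-submodule of $\Hom(U/Z,Z)$, giving the first assertion. The ``in particular'' clause is then immediate: if $X$ centralizes $Z$, then as an $X/R$-module $\Hom(U/Z, Z) \cong \Hom(U/Z, \GF(p))^{\oplus \dim Z} = (U/Z)^{*\,\oplus \dim Z}$, whose $X$-chief factors are exactly the $\GF(p)$-duals of the $X$-chief factors of $U/Z$ (with multiplicity); any submodule has chief factors forming a sub-multiset of these, so in particular a subset, as claimed.

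I do not expect any serious obstacle here — the lemma is essentially the standard ``quadratic module $\Rightarrow$ the commutator pairing is bilinear'' observation packaged $X$-equivariantly. The only point requiring minor care is the bookkeeping of module actions and dual conventions in the equivariance check: one must be consistent about whether $X/R$ acts on $\Hom(U/Z,Z)$ by $(g\cdot f)(\bar u) = f(\bar u^{\,g})^{\,g^{-1}}$ or its inverse, and then verify $\Psi$ respects that choice. Everything else (quadraticity forcing $\varphi_r$ well-defined, additivity of $\Psi$, identification of the kernel) is routine commutator calculus using only that $U$ is abelian and $[U,R,R]=1$.
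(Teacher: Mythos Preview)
Your proposal is correct and follows essentially the same route as the paper: define the commutator pairing $r\mapsto (uZ\mapsto [u,r])$, check it is a group homomorphism $R\to\Hom(U/Z,Z)$ with kernel $C_R(U)$, verify $X$-equivariance via $[u,r^g]=[u^{g^{-1}},r]^g$, and then read off the dual statement when $Z$ is trivial as a module. The paper's proof is organized identically, with the same commutator computations.
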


\begin{proof} Since $R$ acts quadratically on $U$, $W$ is elementary abelian. Furthermore, $R$ centralizes $W$, $U/Z$ and $Z$. Hence all of these groups can be regarded as $\GF(p)X/R$-modules. For $w \in R$, define \begin{eqnarray*}\theta: R &\rightarrow& \Hom(U/Z,Z)\\ w&\mapsto &  \begin{array}{rcl}\theta_w:U/Z&\rightarrow& Z\\uZ&\mapsto&[u,w]\end{array}.\end{eqnarray*}
The calculation in the proof of \fref{lem:abstracting 1.4} shows that the commutator $[u,w]$ defines a  homomorphism from $U$ to $Z$ and, as $w$ centralizes $Z$, $\theta_w$ is a well-defined homomorphism from $U/Z$ to $Z$. Thus $\theta$ is a well-defined map. Consider $w_1,w_2 \in R$, $uZ\in U/Z$  and $\ell \in X$. Then $$(uZ)\theta_{w_1w_2}= [u,w_1w_2]=[u,w_2]^{w_1}[u,w_1]=[u,w_1][u,w_2]= (uZ)\theta_{w_1}(uZ)\theta_{w_2}$$ which means $\theta_{w_1w_2}=\theta_{w_1}\theta_{w_2}$ and so $\theta$ is a group homomorphism.  We show that $\theta$ is an $X$-module homomorphism.  So let $\ell \in X$, $uZ\in U/Z$ and $w \in R$. Then
 $(w^\ell)\theta = \theta_{w^\ell}$ and $$(uZ)\theta_{w^\ell}= [u,w^\ell]= [u^{\ell^{-1}},w]^\ell= (u)(\theta_w\cdot \ell) .$$
Since $\ker \theta = C_R(U)$, this completes the proof of the main claim.

If $Z$ is centralized by $X$, then $$\Hom(U/Z,Z) \cong (U/Z)^*\otimes Z= \bigoplus_{i=1}^n (U/Z)^*$$ where $n$ is such that $|Z|=p^n$. This completes the proof of the lemma.
\end{proof}

\begin{lemma}\label{lem:soldual} Suppose that $V$ is a $p$-group and $X$ is a group which acts faithfully on $V$ with $O_p(X)=1$. Assume $A \leq X$ is an elementary abelian $p$-subgroup  of  order at least $p^2$
which has the property  $C_V(A) = C_V(a)$ for all $a \in A^\#$. If $L$ is a non-trivial subgroup of $X$ and $L=[L,A]$,  then $A$ acts faithfully on $L$.

In particular, $A$ centralizes every $p'$-subgroup which it normalizes,
 $[A,F(X)] = 1$, $E(X) \ne 1$ and, if $L$ is a component of $X$ which is normalized but not centralized by  $A$,  then $A$ acts faithfully on $L$.
\end{lemma}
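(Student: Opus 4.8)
The plan is to establish the displayed statement first, then read off the four ``in particular'' assertions from it.

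\textbf{The displayed statement.} I would argue by contradiction: suppose $L\neq 1$, $L=[L,A]$, but $B:=C_A(L)\neq 1$. Since $X$, and hence $L$, acts faithfully on $V$ and $L\neq 1$, we have $[V,L]\neq 1$. Fix $1\neq b\in B$; by hypothesis $C_V(b)=C_V(A)=:W$, the group $A$ centralises $W$ by the choice of $W$, and $L$ normalises $W$ because $L$ centralises $b$. The first step is that $C_L(W)=1$ is impossible: then $L$ embeds into $\GL(W)$, and as $A$ normalises $L$ while centralising $W$, every $a\in A$ induces the trivial automorphism on the image of $L$ (since $l$ and $l^{a}$ act identically on $W$), so $A$ centralises $L$, contradicting $L=[L,A]\neq 1$. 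Hence $N:=C_L(W)$ is a nontrivial normal subgroup of $LA$ (as $W$ is $LA$-invariant and centralised by $A$). I would finish by induction on $|L|$, replacing $L$ by a suitable nontrivial $A$-invariant subgroup of $N$ with the same properties (note $C_A$ of it still contains $B\neq 1$, and the hypotheses on $A,V$ persist); then $|L|$ drops unless $N=L$, i.e.\ unless $L\le C_X(C_V(A))$. I expect \emph{this residual case to be the main obstacle}: to dispose of it I would use that $C_V(Y)=W$ for $Y:=C_X(C_V(A))$ (because $A\le Y$ forces $C_V(Y)\le C_V(a)=W$ for $a\in A^{\#}$, while $Y$ centralises $W$), that $b\in Z(C_X(b))$, and the Hall--Witt identity for $V$, $L$, $\langle b\rangle$ together with $[L,b]=1$, to force $[V,L]=1$. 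The defining property of $A$ must enter here: without it the claim fails (e.g.\ for a subgroup of order $3$ in $\Sym(3)\times\Sym(3)$ with $p=2$).

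\textbf{The consequences.} Granting the displayed statement, I would first show $A$ centralises every $p'$-subgroup $R$ it normalises. Put $L_0=[R,A]$; by coprime action $L_0=[L_0,A]$, so if $L_0\neq 1$ then $C_A(L_0)=1$. Since $A$ is noncyclic (here $|A|\ge p^{2}$ is used) and acts coprimely on $L_0$, the group $L_0$ is generated by the subgroups $C_{L_0}(B_1)$ with $B_1\le A$ of index $p$; for each such $B_1$, the subgroup $[C_{L_0}(B_1),A]$ equals its own commutator with $A$ and is centralised by $B_1\neq 1$, so the displayed statement forces it to be trivial, whence $C_{L_0}(B_1)=C_{L_0}(A)$. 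Therefore $L_0=C_{L_0}(A)$, contradicting $C_A(L_0)=1$; hence $[R,A]=1$. Applied to $R=F(X)=O_{p'}(X)$, which is a $p'$-group because $O_p(X)=1$, this gives $[A,F(X)]=1$. If $E(X)=1$ then $F^{*}(X)=F(X)$ and $A\le C_X(F(X))=C_X(F^{*}(X))\le F^{*}(X)$, which is impossible for a nontrivial $p$-group; so $E(X)\neq 1$. Finally, if $L$ is a component of $X$ normalised but not centralised by $A$, then the normal subgroup $[L,A]$ of $L$ either lies in $Z(L)$---making every $a\in A$ a central automorphism of the perfect group $L$, hence trivial, so that $A$ centralises $L$, a contradiction---or satisfies $[L,A]Z(L)=L$, which with $L=L'$ yields $L=[L,A]$; the displayed statement then gives that $A$ acts faithfully on $L$.
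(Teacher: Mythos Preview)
Your handling of the four ``in particular'' consequences is correct and essentially matches the paper. The problem is in the displayed statement.

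First, an inefficiency you should notice: your parenthetical ``$l$ and $l^{a}$ act identically on $W$'' already shows $[l,a]\in C_L(W)$ for all $l\in L$ and $a\in A$, hence $L=[L,A]\le C_L(W)$. Thus your ``residual case'' $N=L$ is in fact the \emph{only} case, and the proposed induction on $|L|$ is unnecessary (and its validity is doubtful anyway, since a nontrivial $A$-invariant subgroup $L'\le N$ with $L'=[L',A]$ need not exist). The paper reaches the same point in one line: since $L$ normalizes $C_V(b)=C_V(A)$ and $A$ centralizes $C_V(A)$, the commutator $L=[L,A]$ centralizes $C_V(b)$.

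The genuine gap is the step from ``$L$ centralizes $C_V(b)$ and $[L,b]=1$'' to ``$[V,L]=1$''. Your Hall--Witt/Three Subgroups idea gives at most $[V,L]\le C_V(b)$ and hence $[V,L,L]=1$, which is not enough by itself. The missing tool is Thompson's $A\times B$-Lemma. From $L=[L,A]$ with $A$ a $p$-group one first deduces $L=O^{p}(L)$: the quotient $L/O^{p}(L)$ is a $p$-group on which the $p$-group $A$ acts, so $[L,A;n]\le O^{p}(L)$ for large $n$, while $[L,A;n]=L$ for all $n$. Now $\langle b\rangle$ is a $p$-group commuting with $L$, both acting on the $p$-group $V$, and $[C_V(b),L]=1$; since $L=O^{p}(L)$, the $A\times B$-Lemma yields $[V,L]=1$, contradicting faithfulness and $L\ne 1$. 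This is precisely how the paper finishes.
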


\begin{proof}
 Suppose that $L=[L,A]$  is a non-trivial subgroup of  $X$. Assume that there is $b \in A^\#$ with $[L,b] = 1$. Then  $L$ normalizes $C_V(b)$ and so, as $C_V(b)=C_V(A)$, $L=[L,A]$  centralizes $C_V(b)$. Since $L= [L,A]$, $L= O^p(L)$ and the Thompson $A \times B$-Lemma   implies $[L,V] = 1$, a contradiction. Hence $A$ acts faithfully on $L$.

 Let $F$ be a $p'$-subgroup of $X$ which is normalized by $A$. Then $F= \langle C_F(a)\mid a \in A^\#\rangle$. If $A$ does not centralizes $F$, then there exists $a \in A^\#$ such that $1 \ne [C_F(a),A] = [C_F(a),A, A ]$.  Hence, taking $L= [C_F(a),A]$, we have $L= [L,A]$ and $a \in C_A(L)$, a contradiction.  Hence $[F,A] =1$.  Now $A$ centralizes $F(X)$ and therefore $E(X) \ne 1$.

 If $L$ is a component of $X$ which is normalized by $A$, then either $[L,A]=L$ or $[L,A]=1$. If $[L,A] \ne 1$, then we have $A$ acts faithfully on $L$.
\end{proof}

\begin{lemma}\label{lem:prank} Let $X$ be a group, $N$ a normal subgroup of $G$ and $T\in \syl_p(X)$. Assume that $X=NT$, $C_T(N)=1$, $q = p^{a}$ and
$$N=N_1 \times N_2 \cdots \times N_s,$$ where $N_i \cong \SL_2(q)$ for $1\le i\le s$. Then the $p$-rank of $G$ is $sa$.
\end{lemma}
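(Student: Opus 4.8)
The plan is to establish the two inequalities $\rank_p(X)\ge sa$ and $\rank_p(X)\le sa$ separately, where $\rank_p$ denotes the $p$-rank. The lower bound is immediate: a Sylow $p$-subgroup of $\SL_2(q)$, namely its group of upper unitriangular matrices, is elementary abelian of order $q=p^a$, so a Sylow $p$-subgroup of $N=N_1\times\cdots\times N_s$ is elementary abelian of rank $sa$ and hence $\rank_p(X)\ge\rank_p(N)\ge sa$. (Note also that $C_T(N)=1$ together with $T\in\syl_p(X)$ forces $C_X(N)$ to be a $p'$-group, as it meets $T$ trivially.)

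The one non-routine ingredient is the equality $\rank_p(\Aut(\SL_2(q)))=a$, which I would prove as follows. For $q\ge4$ the group $\Aut(\SL_2(q))$ has a Sylow $p$-subgroup of the form $P\rtimes C$, where $P\cong C_p^a$ is the unipotent radical of a Borel subgroup (identified with the additive group of $\GF(q)$) and $C$ is cyclic of order equal to the $p$-part of $a$, acting on $P=\GF(q)$ as a group of field automorphisms (the diagonal automorphism has order prime to $p$, and there are no graph automorphisms in type $A_1$); for $q\in\{2,3\}$ one has $a=1$ and the statement is trivial. Since $P\trianglelefteq P\rtimes C$ and $C\cap P=1$, an element $cu$ with $c\in C,\ u\in P$ can have order $p$ only if $c^p=1$, i.e.\ $c\in C_0=\{c\in C:c^p=1\}$; hence every elementary abelian $p$-subgroup $E$ of $P\rtimes C$ lies in $P\rtimes C_0$. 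If $E\le P$ then $\rank(E)\le a$; otherwise $E$ contains some $cu$ with $1\ne c\in C_0$, and then $E\cap P\le C_P(cu)=C_P(C_0)=\GF(q^{1/p})$, which has $\GF(p)$-dimension $a/p$, so $\rank(E)\le a/p+1\le a$ (valid since here $p\mid a$, so $a\ge p\ge2$). As the reverse inequality is clear, $\rank_p(\Aut(\SL_2(q)))=a$.

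For the upper bound, let $A$ be an elementary abelian $p$-subgroup of $X$; since every $p$-subgroup of $X$ is conjugate into $T$ and $p$-rank is conjugation invariant, we may assume $A\le T$, so $A$ acts faithfully on $N$ (because $C_A(N)\le C_T(N)=1$) and, normalising $N$, permutes $\Omega=\{N_1,\dots,N_s\}$ — indeed $\Hom(N_i,Z(N))=1$, so by the Krull--Remak--Schmidt theorem $\Aut(N)$ permutes the indecomposable direct factors $N_i$. Let $\Omega_1,\dots,\Omega_r$ be the $A$-orbits on $\Omega$ and $M_j=\prod_{N_i\in\Omega_j}N_i$; faithfulness of $A$ on $M_1\times\cdots\times M_r$ embeds $A$ into $\prod_j A/C_A(M_j)$, so $\rank(A)\le\sum_j\rank(A/C_A(M_j))$, and each $A/C_A(M_j)$ acts faithfully on $M_j$ and transitively on $\Omega_j$. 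Hence it suffices to treat the case where $A$ is transitive on $\Omega$. There, with $B=\Stab_A(N_1)$, the index $s=|A:B|$ is a power of $p$, say $p^m$, and $\rank(A)=\rank(B)+m$. The key point is that $B$ acts faithfully on $N_1$: if $c\in B$ centralises $N_1$, then choosing $a\in A$ with $N_1^{a}=N_i$ and conjugating $[c,N_1]=1$ by $a$ gives $[c^{a},N_i]=1$, and $c^{a}=c$ since $A$ is abelian, so $c$ centralises every $N_i$, hence $N$, whence $c=1$. Therefore $B$ embeds into $\Aut(N_1)\cong\Aut(\SL_2(q))$, so $\rank(B)\le a$, and $\rank(A)\le a+m\le a+(p^m-1)a=sa$ since $m\le p^m-1$. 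Combined with the lower bound this yields $\rank_p(X)=sa$.

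The step I expect to be the main obstacle is the auxiliary equality $\rank_p(\Aut(\SL_2(q)))=a$: one must show that the field-automorphism part $C$ of a Sylow $p$-subgroup of $\Aut(\SL_2(q))$ cannot be combined with the unipotent part to produce an elementary abelian $p$-subgroup of rank exceeding $a$. This is exactly where the hypothesis $C_T(N)=1$ does its work, since it permits $T$ to induce field automorphisms on the factors $N_i$, and these must be shown not to inflate the $p$-rank. The remaining steps — the reduction to a single transitive orbit, and the observation that, because $A$ is abelian, the stabiliser in $A$ of one factor acts faithfully on that factor — are routine.
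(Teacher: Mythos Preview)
Your proof is correct, and it takes a genuinely different route from the paper's.

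The paper argues by case analysis on $q$: for $q\in\{2,3\}$ it lets $T$ act faithfully on a characteristic abelian section of $N$ (namely $O_3(N)$ when $q=2$ and $O_2(N)/Z(N)$ when $q=3$) and quotes the $p$-rank of the relevant general linear group; for $q>3$ with $p$ odd it uses Thompson replacement to assume $A$ is quadratic on $T\cap N$, observes that quadratic action forces $A$ to normalise each $N_i$ and to induce no non-trivial field automorphism, hence $A\le T\cap N$; for $q=2^a\ge 4$ it invokes the Thompson dihedral lemma applied to the action of $T$ on $N_N(T\cap N)/(T\cap N)$.

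Your argument is uniform and more elementary: you reduce via the orbit decomposition to the transitive case, then exploit the fact that when an \emph{abelian} group $A$ acts transitively on $\{N_1,\dots,N_s\}$ the point stabiliser $B=\Stab_A(N_1)$ is faithful on $N_1$ (since commuting with transversal elements transports the action to every factor), so $B\hookrightarrow\Aut(\SL_2(q))$ and $\rank(A)=\rank(B)+m\le a+m\le ap^m=as$. The only non-routine ingredient is your computation $\rank_p(\Aut(\SL_2(q)))=a$, which you handle directly from the semidirect structure $P\rtimes C$ of a Sylow $p$-subgroup. Your justification via $\Hom(N_i,Z(N))=1$ that $\Aut(N)$ permutes $\{N_i\}$ covers the small cases $q\in\{2,3\}$ as well, so no separate treatment is needed. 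What the paper's approach buys in the odd-$p$ case is the slightly sharper conclusion that every elementary abelian $p$-subgroup of maximal rank actually lies inside $T\cap N$; your approach trades this for avoiding Thompson replacement and the dihedral lemma altogether.
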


\begin{proof} Assume first that $q = 2$. Then $T$ acts faithfully on $O_3(N)$.  As the $2$-rank of $\GL_s(3)$ is $s$, we are done. Similarly, if $q = 3$, then $T$ acts faithfully on $O_2(N)/Z(N)$, which is elementary abelian of order $2^{2s}$ we are done as $\GL_{2s}(2)$ has $3$-rank $s$.

Thus we may assume that $q >3$. In particular, the subgroups  $N_i$ are quasisimple and  $T$ permutes the set $\{N_i\mid 1\le i \le s\}$.

Assume   that $p$ is odd. Let $A$ be an elementary abelian subgroup in $T$ of maximal rank and assume that $A \not\leq N$. Then by Thompson replacement \cite[Theorem 25.2]{GoLyS2} we may assume that $A$ acts quadratically on $T \cap N$. This shows that $A$ has to normalize each $N_i$. As non-trivial field automorphisms are not quadratic on $T \cap N_i$, we get that $A$ centralizes $T \cap N$ and so $A \leq T \cap N$, the assertion.

Assume that $q = 2^{a}$ with $a \geq 2$. Let $B = N_N(T \cap N)$. We have that $T$ normalizes $B$ and $T/(T \cap N)$ acts faithfully on $B/(T \cap N)$. Now   the Thompson dihedral Lemma \cite[Lemma 24.1]{GoLyS2} says that for any elementary abelian subgroup $A$ of $T$ we have a $B$-conjugate $A^g$ such that $U = \langle A, A^g \rangle (T \cap N)/(T \cap N)$ is a direct product of $r $ dihedral groups where $2^r = |A/(A \cap N)| \leq 2^s$ and $A(T \cap N)/(T \cap N)$ is a Sylow 2-subgroup of $U$. Set $T_1 = [O_{2^\prime}(U),T \cap N]$. As $U$ is generated by two conjugates of $A$ we see that $|T_1| = |C_{T_1}(A/A \cap N)|^2$. This now shows that $|A| \leq |T \cap N|$, the assertion again. This proves the lemma.
\end{proof}

In the next two lemmas we use the notation presented in the  introduction though we do not assume that $L$ is unambiguous.

\begin{lemma}\label{lem:VL sub QL'} Suppose that $L \in \mathcal{L}_G(S)$, $L \not\le N_G(Q)$   and $V_L = [Y_L,L^\circ]$. Then
\begin{itemize}\item[(i)] $C_{Y_L}(L^\circ) = 1$.
\item [(ii)]    $\Omega_1(Z(S)) \le V_L$.
\item[(iii)] If $V_L$ is an irreducible $L$-module, $V_L \not \le Q$  and $\Omega_1(Z(Q_L)) < Q_L$, then $V_L \leq Q_L^\prime \leq \Phi (Q_L)$.
\end{itemize}
\end{lemma}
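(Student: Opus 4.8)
The three parts should be tackled in order, with each feeding the next. For (i), the key point is that $Y_L$ is the unique maximal elementary abelian normal $p$-subgroup of $L$ with $O_p(L/C_L(Y_L))=1$, so that $L$ acts on $Y_L$ with trivial $O_p$ on the quotient; since $V_L=[Y_L,L^\circ]$ and $L^\circ=\langle Q^L\rangle\trianglelefteq L$, the subgroup $C_{Y_L}(L^\circ)$ is a normal $p$-subgroup of $L$. I would show it is centralized by $L^\circ$ but also that, were it nontrivial, it would be normalized by $Q$ (as $Q\le L^\circ$ and $C_{Y_L}(L^\circ)\trianglelefteq L$), hence centralized by $O^p(N_G(Q))\dots$ — more directly, use that $Q$ is large: $C_{Y_L}(L^\circ)\ne 1$ is a $Q$-invariant subgroup of $C_G(Q)$ (it lies in $\Omega_1(Z(Q_L))\cap C_G(Q)$ since $Q\le Q_L$ and $Y_L=\Omega_1(Z(Q_L))$ after the reduction $L=L_0$), so $N_G(C_{Y_L}(L^\circ))\le N_G(Q)$, forcing $L=C_L(Y_L)\langle\text{stuff in }N_G(Q)\rangle\le N_G(Q)$, contrary to $L\not\le N_G(Q)$. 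Actually the cleanest route: $C_{Y_L}(L^\circ)$ is $L$-invariant and centralized by $L^\circ\supseteq Q$, so if nonzero it contradicts the defining property (ii) of a large subgroup together with $L^\circ\not\le N_G(Q)$.

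For (ii), $\Omega_1(Z(S))$ is a nontrivial $p$-subgroup centralized by $S\ge Q$, and it is normalized by $S$ hence lies in every $L$-invariant... the standard argument: $\Omega_1(Z(S))\le C_G(Q)$ so every $G$-conjugate... rather, since $Q$ is large, $\Omega_1(Z(S))\le Q$ in fact $\Omega_1(Z(S))\le\Omega_1(Z(Q))$, and one shows $\Omega_1(Z(S))\le Y_L$ because $Y_L=\Omega_1(Z(Q_L))$ and $S$ normalizes $Q_L=O_p(L)$ so $\Omega_1(Z(S))\le C_{Y_L}(S)$... the inclusion $\Omega_1(Z(S))\le\Omega_1(Z(Q_L))=Y_L$ follows from $Q_L\le S$ being normal in $S$ hence $Z(S)\cap Q_L$ is large inside... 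I would simply invoke that $Z(S)\le Q_L$ (as $Q_L\trianglelefteq S$ and the group has characteristic $p$ type behaviour), giving $\Omega_1(Z(S))\le\Omega_1(Z(Q_L))=Y_L$; then since $\Omega_1(Z(S))$ is $S$-central it is $L^\circ$... no — combine with (i): $\Omega_1(Z(S))\cap C_{Y_L}(L^\circ)$, and use that $Y_L/V_L$ is centralized by $O^p(L^\circ)$ together with $C_{Y_L}(L^\circ)=1$ to pin $\Omega_1(Z(S))$ into $V_L$. The honest version will use $\langle\Omega_1(Z(S))^{L^\circ}\rangle$: this is an $L^\circ$-invariant, elementary abelian subgroup of $Y_L$, so its commutator with $L^\circ$ lies in $V_L$, and by (i) the fixed points vanish, forcing $\Omega_1(Z(S))\le V_L$.

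For (iii), assume $V_L$ is an irreducible $L$-module, $V_L\not\le Q$ and $\Omega_1(Z(Q_L))<Q_L$, i.e.\ $Y_L<Q_L$. Apply \fref{lem: U>YL abelian} with $X=L$, $V=V_L$ and a suitable $U$: I want $V_L\le Q_L'$. The idea is that $Q_L'$ is a normal $p$-subgroup of $L$ on which the chief factor analysis of \fref{lem:isochieffactors} or \fref{lem: U>YL abelian}(ii) applies. First note $Q_L'\le\Phi(Q_L)$ always. By \fref{lem: U>YL abelian}(ii) applied with $U=\Omega_1(Z(Q_L))\cdot(\text{something})$ — rather, since $Y_L=\Omega_1(Z(Q_L))<Q_L$, consider $U$ with $Y_L<U\le Q_L$, $[U,O^p(L)]\le V_L$, $U/V_L$ centralized by $O^p(L)$, $U\not\le\Omega_1(Z(Q_L))$; then $Q_L/C_{Q_L}(U)$ has a non-central chief factor isomorphic to $V_L$. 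Dualizing via \fref{lem:dualchieffactors}, $V_L$ (or its dual) appears in $Q_L/\Phi(Q_L)$-related sections; the key is that $V_L$ being irreducible and $V_L\not\le Q$ forces, through the weak-closure properties of $Q$, that $V_L$ cannot be a complemented chief factor of $Q_L/\Phi(Q_L)$ — if $V_L\not\le\Phi(Q_L)$ then $V_L\cap\Phi(Q_L)=1$ by irreducibility, and $V_L$ maps onto a chief factor of $Q_L/\Phi(Q_L)$ that is centralized by... then one derives that $\Omega_1(Z(Q_L))\ge$ a $Q$-invariant overgroup, contradicting largeness as in the proof of \fref{lem: U>YL abelian}(i). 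So $V_L\le\Phi(Q_L)$; then refining to $Q_L'$ uses that $Q_L/Q_L'$ is abelian and the same irreducibility/largeness obstruction applies to the abelian quotient.

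\textbf{Main obstacle.} The delicate step is (iii): getting from "$V_L\le\Phi(Q_L)$'' down to "$V_L\le Q_L'$'' cleanly, and more basically making the chief-factor/largeness contradiction airtight when $V_L\not\le\Phi(Q_L)$. The subtlety is that $\Phi(Q_L)=Q_L'Q_L^{(p)}$ (with $Q_L^{(p)}$ the Frattini $p$-th powers), so ruling out $V_L$ contributing to the $p$-th power part — i.e.\ that $V_L\not\le Q_L'$ would force $V_L$ to appear in $Q_L/Q_L'$ — requires knowing the relevant section is in fact the commutator section, which I expect follows from $V_L=[Y_L,L^\circ]$ being generated by commutators together with quadratic action, but it needs care. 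I would handle it by: (a) showing $\Omega_1(Z(Q_L))<Q_L$ together with largeness forces $V_L\le\Phi(Q_L)$ exactly as in \fref{lem: U>YL abelian}(i) (the subgroup $V_L\Phi(Q_L)/\Phi(Q_L)$, if nonzero, produces a $Q$-invariant subgroup of $\Omega_1(Z(Q_L))$ centralized by $O^p(L)$, contradiction); (b) replacing $Q_L$ by $Q_L/Q_L'$ and rerunning the identical argument in that abelian quotient to conclude the image of $V_L$ there is trivial, i.e.\ $V_L\le Q_L'$.
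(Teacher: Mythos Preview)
Your approach to (i) is correct and essentially the paper's: $C_{Y_L}(L^\circ)$ is $L$-invariant and lies in $C_G(Q)$, so if nontrivial, largeness of $Q$ forces $L\le N_G(Q)$, a contradiction.

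For (ii) there is a real gap. From $[U,L^\circ]\le V_L$ and $C_U(L^\circ)=1$ (with $U=\langle\Omega_1(Z(S))^{L^\circ}\rangle$) you cannot conclude $U\le V_L$: the quotient $UV_L/V_L$ is centralized by $L^\circ$, so it consists entirely of $L^\circ$-fixed vectors, but fixed points in a quotient need not lift to fixed points in $Y_L$. The missing ingredient is Gasch\"utz's theorem \cite[Theorem~9.26]{GoLyS2}. The paper first cites \cite[Lemma~1.2.4(g)]{stru} for $\Omega_1(Z(S))\le Y_L$; then, since $S\cap L^\circ\in\Syl_p(L^\circ)$, Gasch\"utz gives $C_{Y_L}(S\cap L^\circ)\le C_{Y_L}(L^\circ)\,[Y_L,L^\circ]=V_L$ using (i), and $\Omega_1(Z(S))\le C_{Y_L}(S\cap L^\circ)$.

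For (iii) your plan is both more complicated than needed and not clearly sound: the assertion in your step (a) that $V_L\Phi(Q_L)/\Phi(Q_L)\ne 1$ ``produces a $Q$-invariant subgroup of $\Omega_1(Z(Q_L))$ centralized by $O^p(L)$'' is unjustified, and the duality machinery of \fref{lem:dualchieffactors} is irrelevant here. The paper's argument is short and elementary. First, from (ii) and irreducibility of $V_L$, every nontrivial normal $p$-subgroup $N\trianglelefteq L$ satisfies $\Omega_1(Z(S))\cap N\ne 1$, hence $V_L\cap N\ne 1$, hence $V_L\le N$; in particular $N\not\le Q$. Second, one shows $Q_L$ is non-abelian: if $Q_L'=1$ then $[Q,Q_L,Q_L]=1$, so $Q_L$ acts quadratically on $Q$, whence $Q_LQ/Q$ is elementary abelian and $\Phi(Q_L)\le Q$. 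Taking $N=\Phi(Q_L)$ in the first observation forces $\Phi(Q_L)=1$, so $Q_L=\Omega_1(Z(Q_L))$, contradicting the hypothesis. Hence $Q_L'\ne 1$, and taking $N=Q_L'$ gives $V_L\le Q_L'\le\Phi(Q_L)$ at once. Your worry about separating $Q_L'$ from the $p$-th-power part of $\Phi(Q_L)$ disappears on this route.
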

\begin{proof} As $C_{Y_L}(L^\circ) \leq C_G(Q)$ is normalized by $L$,   (i) is a consequence of $Q$ being large.

By \cite[Lemma 1.24 (g)]{stru},  $\Omega_1(Z(S)) \le Y_L$ now Gasch\"utz Theorem  \cite[Theorem 9.26]{GoLyS2} and (i) give (ii).

     Assume that $N$ is a non-trivial normal $p$-subgroup of $L$. Then $\Omega_1(Z(S)) \cap N \ne 1$.  Since $V_L$ is irreducible as a $L$-module, (ii) gives $V_L \le N$. In particular, as $V_L \not\le Q$, $N \not \le Q$.

      Suppose that $Q_L$ is abelian. Then, as $Q=O_p(N_G(Q))$ and $[Q,Q_L,Q_L]\le Q_L'=1$, $Q_L$ is quadratic on $Q$, and hence $Q_LQ/Q$ is elementary abelian and so $\Phi(Q_L) \le Q$. By the remark earlier taking $N = \Phi(Q_L)$ we obtain $\Phi(Q_L) =1$, contrary to $\Omega_1(Z(Q_L)) < Q_L$.  Hence $Q_L$ is non-abelian. Thus  $Q_L' \ne 1$ and so, as $V_L$ is irreducible, $V_L \le Q_L' \le \Phi(Q_L)$. This proves (iii).
\end{proof}

\begin{lemma}\label{lem:[WO_p][W^mO_p]}  Suppose that $L \in \mathcal{L}_G(S)$, $L \not\le N_G(Q)$  and $V_L = [Y_L,L^\circ]$. Assume that $Y_L = \Omega_1(Z(Q_L))$, $m \in L$ and $O^p(L)Q_L \le KQ_L$, where $K = \langle W, W^m\rangle$. Then $O^p(L)\le K$ and the following hold
\begin{enumerate} \item[(i)] $[O^p(L),Q_L ]\le [W,Q_L][W^m,Q_L] \le (W\cap Q_L)(W^m\cap Q_L)=U_L .$
\item [(ii)]If  $[W,W] \le V_L$, then $W$ acts quadratically on the non-central chief factors of $Q_L/V_L$.
\end{enumerate}
 Assume, in addition, that $V_L$ is irreducible as a $K$-module,  $[V_L,W,W]\ne 1$,
and $[W,W] \le V_L$.  Then
\begin{enumerate}
\item [(iii)] $W \cap W^m \cap Q_L\le Y_L$;
\item [(iv)]  $U_L/Y_L$ is elementary abelian or trivial;
and
\item[(v)] either  $Q_L = Y_L$ or  $U_L' \ge V_L$.
\end{enumerate}
\end{lemma}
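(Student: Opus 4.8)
The plan is to exploit the hypothesis $O^p(L)Q_L \le KQ_L$ together with $K = \langle W, W^m\rangle$ to get $O^p(L)\le K$, and then to run commutator estimates inside $Q_L$. First I would observe that $K Q_L/Q_L$ contains $O^p(L)Q_L/Q_L = O^p(L/Q_L)$, and since $K$ is generated by $2$-groups (more precisely $p$-groups) $W$ and $W^m$ which are contained in $Q_{\text{something}}\le S$, one deduces $O^p(L)\le O^p(KQ_L)\le K$ because $K Q_L/Q_L$ has the same $p'$-part as $K/(K\cap Q_L)$. For (i), note $[O^p(L),Q_L]\le[K,Q_L] = [\langle W,W^m\rangle,Q_L] = [W,Q_L][W^m,Q_L]$ using that $Q_L$ is normal in $L$ (so $[W,Q_L]$ and $[W^m,Q_L]$ are already normal, and the commutator of a product with a normal subgroup is the product of commutators). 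Then $[W,Q_L]\le W\cap Q_L$ and $[W^m,Q_L]\le W^m\cap Q_L$ since $W,W^m\le S$ normalizes $Q_L$; combining gives the chain ending in $U_L$, once one checks $U_L = \langle(W\cap Q_L)^L\rangle$ actually equals $(W\cap Q_L)(W^m\cap Q_L)$ here — that equality presumably follows because $L = O^p(L)Q_L\le KQ_L$ and $W\cap Q_L$ is already $Q_L$-invariant, so its $L$-closure is generated by $K$-conjugates, and $K$ is generated by $W,W^m$. For (ii), with $[W,W]\le V_L$: $W$ acts on $Q_L/V_L$, and the claim is quadratic action on non-central chief factors. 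Here I would use $[Q_L,W]\le W\cap Q_L$ from (i)-type reasoning, hence $[Q_L,W,W]\le[W\cap Q_L,W]\le[W,W]\le V_L$; so $W$ acts quadratically on $Q_L/V_L$, a fortiori on its non-central chief factors.

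For the second block, assume additionally $V_L$ irreducible as a $K$-module, $[V_L,W,W]\ne 1$, and $[W,W]\le V_L$. For (iii): set $J = W\cap W^m\cap Q_L$. The idea is that $J$ is centralized by enough of $K$. Since $[Q_L,W]\le W\cap Q_L$ and symmetrically $[Q_L,W^m]\le W^m\cap Q_L$, and $J\le W\cap Q_L$, we get $[J,W]\le[W\cap Q_L, W]\le[W,W]\le V_L$ and similarly $[J,W^m]\le V_L$; so $[J,K]\le V_L$, i.e.\ $K$ centralizes $JV_L/V_L$. But also $J\le W$ acts quadratically-ish; one wants $J\le C_K(V_L)Q_L$ or directly $[V_L,J]=1$. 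The point: $J\le W\cap W^m$, so $[V_L,J]\le[V_L,W]\cap[V_L,W^m]$, and one argues this is trivial using irreducibility of $V_L$ and $[V_L,W,W]\ne 1$ (if $[V_L,J]\ne1$ then since $V_L$ is $K$-irreducible, $\langle[V_L,J]^K\rangle=V_L$, forcing $[V_L,W]=V_L$, contradicting that $W$ acts quadratically with $[W,W]\le V_L$, via a dimension/quadratic-form count as in Lemma~\ref{lem:Sym5-modules} or Lemma~\ref{lem:dualchieffactors}). So $[V_L,J]=1$, hence $J\le C_{Q_L}(V_L)$; combined with $J$ being abelian-mod-$V_L$ one pushes $J\le C_{Q_L}(V_L)\cap(\text{quadratic part})\le C_{Q_L}(Q_L)\dots$ — the cleanest route is: $J$ centralizes $V_L$ and $[J,Q_L]\le[W\cap Q_L,Q_L]\le\dots$, eventually $J\le\Omega_1(Z(Q_L)) = Y_L$ by the hypothesis $Y_L = \Omega_1(Z(Q_L))$. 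For (iv): $U_L = (W\cap Q_L)(W^m\cap Q_L)$ by (i); each factor is abelian modulo $[W,W]\le V_L\le Y_L$ (since $W$ acts quadratically, $(W\cap Q_L)' \le [W,W]\le V_L$, and $V_L\le Y_L$ by $Y_L\supseteq V_L$... actually $V_L\le Y_L$ always), so $U_L/Y_L$ is a product of two abelian groups with $[W\cap Q_L, W^m\cap Q_L]\le\dots$; using (iii), $W\cap W^m\cap Q_L\le Y_L$ controls the overlap and one concludes $U_L/Y_L$ abelian, then elementary abelian since everything has exponent $p$ modulo $V_L$. For (v): suppose $Q_L\ne Y_L$; then $\Omega_1(Z(Q_L)) = Y_L < Q_L$, so Lemma~\ref{lem:VL sub QL'}(iii) applies (once one checks $V_L\not\le Q$ and $V_L$ irreducible as an $L$-module — the latter may need the $K$-irreducibility to propagate, or it is part of the running hypotheses) giving $V_L\le Q_L'$. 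Then one shows $V_L\le U_L'$ by noting that the commutators realizing $Q_L' \ge V_L$ can be taken among $W$-conjugates inside $U_L$: since $[Q_L,W]\le W\cap Q_L\le U_L$ and $[W\cap Q_L, W\cap Q_L]$ together with $[W\cap Q_L, W^m\cap Q_L]$ generate a subgroup containing $V_L$ (using $[V_L,W,W]\ne1$ to guarantee a nontrivial commutator lands in $V_L$, then irreducibility of $V_L$ as a $K$-module to get all of $V_L$).

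The main obstacle I expect is part (iii) — pinning down that $W\cap W^m\cap Q_L$ lands inside $Y_L$ rather than merely centralizing $V_L$. This requires carefully combining the quadratic action of $W$ on $Q_L/V_L$, the $K$-irreducibility of $V_L$, the nondegeneracy hypothesis $[V_L,W,W]\ne1$, and the defining property $Y_L = \Omega_1(Z(Q_L))$; the subtlety is that an element of $J$ could conceivably centralize $V_L$ and act trivially on $Q_L/V_L$ without lying in $Z(Q_L)$ unless one rules out a "diagonal" action, which is where $[V_L,W,W]\ne1$ (forcing the two wreathed $\SL_2$-factors to be genuinely distinct) does the work. Once (iii) is in hand, (iv) and (v) are comparatively routine commutator bookkeeping using (i), (ii), and Lemma~\ref{lem:VL sub QL'}(iii).
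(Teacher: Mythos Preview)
Your treatment of $O^p(L)\le K$ and parts (i), (ii), (iv) is essentially the paper's argument. The gaps are in (iii) and (v).

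For (iii), you correctly reach $[J,K]\le V_L$, so $JV_L/V_L$ is centralized by $K$. But then you detour into proving $[V_L,J]=1$, which is automatic: $J\le Q_L$ and $V_L\le Y_L=\Omega_1(Z(Q_L))$. The real issue, which you flag but do not resolve, is why $J\le Z(Q_L)$. The paper's mechanism is \fref{lem: U>YL abelian}(ii): since $W$ acts quadratically on every non-central $K$-chief factor of $Q_L/V_L$ (by (ii)) but \emph{not} on $V_L$ (since $[V_L,W,W]\ne 1$), no non-central chief factor of $Q_L/V_L$ is isomorphic to $V_L$. If $JV_L\not\le Y_L$, then \fref{lem: U>YL abelian}(ii) would produce a chief factor of $Q_L/C_{Q_L}(JV_L)$ isomorphic to $V_L$, a contradiction. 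Your references to \fref{lem:Sym5-modules} or \fref{lem:dualchieffactors} do not supply this; the crucial lemma is \fref{lem: U>YL abelian}.

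For (v), your route via \fref{lem:VL sub QL'}(iii) to get $V_L\le Q_L'$ and then ``the commutators realizing $Q_L'\ge V_L$ can be taken inside $U_L$'' does not work: there is no reason the relevant commutators of $Q_L$ lie in $U_L$. The paper instead argues by contradiction. If $V_L\not\le U_L'$ and $Q_L\ne Y_L$, then (iv) and \fref{lem:VL sub QL'}(ii) force $U_L$ to be elementary abelian. One then selects $E\le Q_L$ minimal with $E=[E,O^p(L)]$ and $E/V_L$ containing a non-central $K$-chief factor; part (i) gives $E\le U_L$, so $U_L$ centralizes $E$. Minimality forces $[[E,Q_L],O^p(L)]\le V_L$, whence \fref{lem: U>YL abelian} gives $[E,Q_L]\le Y_L$, so $Q_L$ is quadratic on $E$. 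Now \fref{lem:isochieffactors} (using again that no non-central chief factor of $Q_L/V_L$ is isomorphic to $V_L$) yields $E\le Y_L$, a contradiction. The engine in both (iii) and (v) is the same chief-factor separation, packaged in \fref{lem: U>YL abelian} and \fref{lem:isochieffactors}; your plan does not invoke either.
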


\begin{proof} Since $W$ and $W^m$ are normalized by $Q_L$, $K=\langle W,W^m\rangle$ is normalized by $Q_LK$ and so $O^p(L) \le K$.
 Since $W$, $W^m$, $[Q_L,W]$ and $[Q_L,W^m]$ are normalized by $Q_L$, we have $$[Q_L,O^p(L)] \le [Q_L, \langle W, W^m\rangle]=[Q_L, W][Q_L,  W^m] \le (W\cap Q_L)(W^m\cap Q_L).$$
 In particular, $A = (W\cap Q_L)(W^m\cap Q_L)$ is normalized by $O^p(L)$. Since $(W \cap Q_L)^L = (W \cap Q_L)^{SO^p(L)}= (W\cap Q_L)^{O^p(L)}$, we have $A = U_L$.
 Thus (i) holds.

By the  additional hypothesis,  $$[Q_L,W,W]\le [W,W]\le V_L$$ and so $W$ acts quadratically on all the non-central $L$-chief factors  in $Q_L/V_L$, which is (ii).

Notice that part (ii), $V_L$ irreducible as a $K$-module and $[V_L,W,W]\ne 1$  together imply that the non-central $K$-chief factors in $Q_L/V_L$ are not isomorphic to $V_L$.

Set $I= W \cap W^m \cap Q_L$. Then $I \le W \cap W^m$ and so $$[I,W] \le [W,W]\le V_L$$ and $$[I,W^m]\le [W^m,W^m]\le V_L^m= V_L.$$  Hence $IV_L/V_L$ is centralized by $\langle W,W^m\rangle = K$.
   As $W$ acts quadratically on all the non-central chief factors of $K$  in $Q_L/V_L$  by (ii) and by assumption, $W$ does not act quadratically on $V_L$, \fref{lem: U>YL abelian} implies that $I \le \Omega_1(Z(Q_L))=Y_L$. This proves (iii).

 Since $W$ is generated by elements of order $p$, $W/[W,W]$ is elementary   abelian and therefore, as   $[W,W]\le V_L$, $WV_L/V_L$ is also elementary abelian. Since $W \cap Q_L$ and $Q_L\cap W^m$ normalize each other parts (i) and (iii) give    (iv).

If $V_L \not \le U_L' $  and $Q_L \not= Y_L$, then, as $U_L/Y_L$ is elementary abelian by (iv), \fref{lem:VL sub QL'} (ii) implies $U_L$ is elementary abelian. Select $E$ with $Q_L\ge E > V_L$ of minimal order such that $E= [E,O^p(L)]$ and $E/V_L$ has a  non-central $K$-chief factor.   Then $$E\le  [Q_L,O^p(L)]\le [Q_L,W][Q_L,W^m]\le U_L\le C_{L}(E).$$ Furthermore, $V_L[E,Q_L]< E$ and so $[[E,Q_L], O^p(L)]\le V_L$. Therefore \fref{lem: U>YL abelian}  implies that $[E,Q_L]\le Y_L$ and so $Q_L$ acts quadratically on $E$.
 Hence \fref{lem:isochieffactors} implies that $E \le Y_L$, a contradiction. Hence $U_L'$ is non-trivial and it follows that $V_L \le U_L'$.
\end{proof}

\section{The reduction}\label{sec:wreath cases}

We use the notation presented in the introduction. For the rest of this article we have $L \in \mathcal{L}_G(S)$ with $Q$  not normal in $L$ and $L$  is in the unambiguous wreath product case.
This means that $Y_L= V_L$ unless we are in the special case that $\ov{L^\circ} \cong \SL_2(4)$ or $\GammaSL_2(4)$, $|Y_L:V_L|=2$ and $$V_L \not \le Q.$$

We start with a general result which just requires $V_L\not\le Q$.

\begin{lemma}\label{lem:w1} The following hold.
\begin{enumerate}
\item $\langle V_L^{D}\rangle$ is not a $p$-group;
\item $[Q,\langle V_L^{D}\rangle ]\le W$; and
\item $W \not \le C_G(V_L)$.
\end{enumerate}
\end{lemma}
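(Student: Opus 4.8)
The plan begins with a \textbf{preliminary observation}: $D\le N_G(Q)$. Since $Q$ is weakly closed in $S$, $Q\trianglelefteq S$, so $V_L\le Q_L\le S$ normalizes $Q$; and for $g\in N_G(Q)$ we get $V_L^g\le S^g\le N_G(Q^g)=N_G(Q)$, whence $\langle V_L^{N_G(Q)}\rangle\le N_G(Q)$ and so $D\le N_G(Q)$. In particular $Q\trianglelefteq D$, hence $Q^d=Q$ for all $d\in D$ and $Q\le O_p(D)$; also $1\ne\Omega_1(Z(S))\le V_L\cap Q\le W$, using $Z(S)\le Z(Q)$ and \fref{lem:VL sub QL'}(ii). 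For \textbf{(ii)}, set $N=\langle V_L^D\rangle$. As $V_L\trianglelefteq L\ni Q$ and $V_L\le S\le N_G(Q)$, both $[Q,V_L]\le V_L$ and $[Q,V_L]\le Q$, so $[Q,V_L]\le V_L\cap Q$; conjugating by $d\in D$ and using $Q^d=Q$ gives $[Q,V_L^d]=[Q,V_L]^d\le(V_L\cap Q)^d\le W$. Thus $Q$ centralizes each image $V_L^dW/W$ in $N/W$, hence the subgroup they generate, which is $N/W$; that is $[Q,N]\le W$.

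For \textbf{(i)}, suppose $N=\langle V_L^D\rangle$ is a $p$-group. Then $N\trianglelefteq D$ forces $N\le O_p(D)$, so $V_L\le O_p(D)$. The plan is to contradict this using that $\langle V_L^{N_G(Q)}\rangle$, being a normal subgroup of $N_G(Q)$ containing $V_L\not\le Q=O_p(N_G(Q))$, is not a $p$-group, and transferring the failure of $p$-ness from $\langle V_L^{N_G(Q)}\rangle$ to the $D$-normal-closure $N$ of $V_L$. Here one would exploit largeness of $Q$; for instance $O_{p'}(N_G(Q))=1$, since $[O_{p'}(N_G(Q)),Q]\le O_{p'}(N_G(Q))\cap Q=1$ puts $O_{p'}(N_G(Q))\le C_G(Q)\le Q$. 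This bookkeeping — controlling how much of $\langle V_L^{N_G(Q)}\rangle$ is recovered from $D$-conjugates of $V_L$ — is the delicate point, since a priori $\langle V_L^D\rangle$ can be much smaller than $\langle V_L^{N_G(Q)}\rangle$.

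For \textbf{(iii)}, suppose for contradiction that $W\le C_G(V_L)$. Since $W\trianglelefteq D$ centralizes $V_L$, conjugating by $d\in D$ shows $W$ centralizes every $V_L^d$, hence $W$ centralizes $N=\langle V_L^D\rangle$; so $W\le Z(N)$, and by (ii) $[Q,N]\le Z(N)$. Fix $e\in D$. Then $[V_L^e,Q]\le V_L^e\cap Q=(V_L\cap Q)^e\le W$, so $[[V_L^e,Q],V_L]\le[W,V_L]=1$, while $[Q,V_L]\le V_L\cap Q\le W\le Z(N)$ gives $[[Q,V_L],V_L^e]\le[Z(N),N]=1$; the Three Subgroups Lemma then yields $[[V_L,V_L^e],Q]=1$. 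Conjugating, $[[V_L^{d_1},V_L^{d_2}],Q]=1$ for all $d_i\in D$, so each $[V_L^{d_1},V_L^{d_2}]\le C_G(Q)\le Q$; since $N$ is generated by the $V_L^d$ and $Q\trianglelefteq N$, we get $N'\le Q$, a $p$-group. Hence $N/N'$ is abelian and, by (i), not a $p$-group, so the $p$-subgroup $V_LN'/N'$ lies in $O_p(N/N')=O_p(N)/N'$, i.e.\ $V_L\le O_p(N)$. As $O_p(N)$ is characteristic in $N\trianglelefteq D$, $O_p(N)\trianglelefteq D$ and $O_p(N)\le O_p(D)$, so $V_L\le O_p(D)$ and therefore $N=\langle V_L^D\rangle\le O_p(D)$ is a $p$-group — contradicting (i). Thus $W\not\le C_G(V_L)$.

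\textbf{Main obstacle.} Parts (ii) and (iii) run smoothly once $D\le N_G(Q)$ and (i) are in hand; the genuinely hard step is (i) — making rigorous the descent of ``not a $p$-group'' from $\langle V_L^{N_G(Q)}\rangle$, where it is immediate, to $\langle V_L^D\rangle$. I expect this to go through an $O^p$- or $p'$-core argument inside $N_G(Q)$ that exploits the largeness of $Q$.
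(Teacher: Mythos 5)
You correctly identify (i) as the crux, but you leave it unproved, and the gap is real: your proposal does not yield a proof. Your worry that $\langle V_L^D\rangle$ ``can be much smaller than $\langle V_L^{N_G(Q)}\rangle$'' is pointing in the wrong direction. The resolution is not to compare sizes but to observe subnormality. By definition $D = K\,(L\cap N_G(Q))$ where $K=\langle V_L^{N_G(Q)}\rangle$, and $L\cap N_G(Q)$ normalizes $V_L$ (as $V_L\trianglelefteq L$); therefore
\[
\langle V_L^D\rangle \;=\; \langle V_L^{K(L\cap N_G(Q))}\rangle \;=\; \langle V_L^{K}\rangle .
\]
Now $\langle V_L^{K}\rangle \trianglelefteq K \trianglelefteq N_G(Q)$, so $\langle V_L^D\rangle$ is subnormal in $N_G(Q)$. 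A subnormal $p$-subgroup lies in $O_p$, so if $\langle V_L^D\rangle$ were a $p$-group we would get $V_L\le \langle V_L^D\rangle\le O_p(N_G(Q))=Q$, contrary to the standing hypothesis $V_L\not\le Q$. This one-line identification is exactly what your ``bookkeeping'' step was groping for and is the only genuinely nontrivial content of part (i); the $O_{p'}(N_G(Q))=1$ observation you offer does not by itself supply it.

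Your part (ii) matches the paper's argument. Your part (iii), which does depend on (i), is logically sound but considerably more circuitous than needed: after establishing $W\le Z(N)$ you invoke the Three Subgroups Lemma, show $N'\le Q$, and run a chain of normality arguments through $O_p(N)\le O_p(D)$. The paper's route is shorter: once $W$ centralizes $N=\langle V_L^D\rangle$, combine with (ii) to see that $N$ stabilizes the chain $Q\ge W\ge 1$ of subgroups of the $p$-group $Q$; hence $O^p(N)$ centralizes $Q$, so $O^p(N)\le C_G(Q)\le Q$ forces $O^p(N)=1$, making $N$ a $p$-group and contradicting (i) directly. Your version works, but you are paying with the Three Subgroups Lemma and an $O_p(N)/O_p(D)$ detour for something the stabilizer-chain argument gives immediately.
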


\begin{proof} Let $\tilde C= N_G(Q)$ and $K = \langle V_L^{\tilde C}\rangle$. As $D = KN_L(Q)$ and $N_L(Q)$ acts on $V_L$ we have  $\langle V_L^{D}\rangle = \langle V_L^K \rangle$  is subnormal in $H$. If $\langle V_L^D \rangle$ is a $p$-group, we obtain $V_L  \le O_p(N_G(Q))=Q$ which is  a contradiction. This proves (i).

 We have $[Q,V_L] \le Q \cap V_L \le W$.  As $W$ and $Q$ are normalized by $D$, (ii) holds.

Assume  $W \le C_G(V_L)$. Then $[W,V_L]=1$ and so $[W,\langle V_L^{D}\rangle]=1$. Hence $X=O^p(\langle V_L^{D}\rangle)$ centralizes $Q$ by  (ii). Since $C_G(Q) \le Q$, we have  $X\le Q$.   Thus $X=1$  and $\langle V_L^{D}\rangle$ is a $p$-group, which contradicts (i). Hence $W \not \le C_G(V_L)$.
\end{proof}

 We adopt the following  notation.
Let $B \ge C_L(V_L)$ be such that  $\ov B= \langle  \mathcal K\rangle$ and let $S_0 = S \cap B$.
We write $B= K_1 \dots K_s$ where $K_i \ge C_L(V_L)$, $\ov{K_i}\in \mathcal K$, $\ov {K_i} \cong \SL_2(q)$ and, for $1 \le i \le s$, put $$S_i= S \cap K_i$$ $$V_L^{i}= [V_L,K_i],$$ $$Z_{i}=  C_{V_L^i}(S_i)=C_{V_L^i}(S_0)$$ and
$$Z_0 = Z_1 \dots Z_s= C_{V_L}(S_0).$$

We begin by showing that $\ov W$ is not contained in the base group $\ov B$.

\begin{lemma}\label{lem:not base} Suppose that $\ov L$ is either properly wreathed,   or  $q= p^a$ (where $p$ divides $a$) and   some element of $\ov{ L^\circ}$ induces a non-trivial field automorphism on  $O^p( \ov{L^\circ}) \cong \SL_2(q)$. Then $W $ is not contained in $S_0$. In particular, if $\ov L$ is properly wreathed with $q=s=2$,   then $\ov Q$ is not cyclic of order $4$.
\end{lemma}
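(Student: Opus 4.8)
The plan is to argue by contradiction: assume $W \le S_0$, i.e.\ $\ov W \le \ov B = \langle \mathcal K\rangle$. I would first extract the key consequence of Lemma~\ref{lem:w1}: since $W \not\le C_G(V_L)$, there is some $i$ with $[V_L^i, W]\ne 1$, and since $\ov W$ sits inside the base group $\bigtimes_{j}\ov{K_j}$ and each $\ov{K_j}$ acts only on $V_L^j$, the projection $\ov W_i$ of $\ov W$ into $\ov{K_i}\cong\SL_2(q)$ is a non-trivial $p$-subgroup acting on the natural module $[V_L,K_i]$. The point to exploit is that $D$ (hence $N_G(Q)$) normalizes $W$ and acts on $\mathcal K$, and by the structure of the wreath product case $Q$ is transitive on $\mathcal K$; together with $W = \langle (V_L\cap Q)^D\rangle$ this forces $\ov W$ to have \emph{equal} non-trivial projections onto every factor $\ov{K_j}$ in the $Q$-orbit of $\ov{K_i}$ — in the properly wreathed case that is all of $\mathcal K$. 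So $\ov W$ is "diagonal" across the base group.

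Next I would derive the numerical contradiction. In the properly wreathed case ($s = |\mathcal K| > 1$), the diagonal nature of $\ov W$ means $|\ov W|$ is much smaller than $|\ov W|\cdot|\ov{L^\circ}:\ov B \ov W|$ would need to be for $W$ to move $V_L$ out of $Q$ in the way Lemma~\ref{lem:w1}(ii) demands; more precisely, $[Q,\langle V_L^D\rangle]\le W$ combined with $O^p(\langle V_L^D\rangle)$ not being a $p$-group (Lemma~\ref{lem:w1}(i)) forces $W$ to be "large enough" to generate, together with its conjugates, a non-solvable group acting on $Q$, but a diagonal subgroup of the base group cannot do this while $D$ normalizes it — one plays $|W|$ against $|V_L\cap Q|$ and $|V_L : V_L\cap Q|$ using that $D$ normalizes both $W$ and $Q$. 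In the field-automorphism case ($s=1$, $q=p^a$, $p\mid a$), instead I would use that a non-trivial field automorphism of $\SL_2(q)$ is not quadratic on the natural module (this is the mechanism used repeatedly, cf.\ Lemma~\ref{lem:prank}): if $\ov W\le \ov{S_0}$ is a $p$-subgroup of $\SL_2(q)$ it acts quadratically on $[V_L,K_1]$, but the full group $\ov{L^\circ}$ contains a field automorphism, and chasing the action of $D$ on $W$ one shows $\ov W$ would have to be field-automorphism-like, contradicting quadraticity; then $W\le S_0$ gives $[V_L,W,W]=1$, and feeding this back through Lemma~\ref{lem:w1}(ii)--(iii) or Lemma~\ref{lem: U>YL abelian} yields $W\le C_G(V_L)$, contradicting Lemma~\ref{lem:w1}(iii).

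For the final sentence: if $\ov L$ is properly wreathed with $q = s = 2$, then $\ov B \cong \SL_2(2)\times\SL_2(2) \cong \Sym(3)\times\Sym(3)$, so $\ov{S_0}$ is a fours group and $\ov{S_0}$ is the unique Sylow $2$-subgroup of $\ov B$. If $\ov Q$ were cyclic of order $4$, then $\ov Q$ has a cyclic subgroup of order $4$, but $\ov S = \ov Q$ (as $Q$ is large and $\ov{L^\circ}=\ov B\ov Q$ with $\ov Q$ transitive on the two factors gives $\ov S = \ov Q$ here up to the wreathing action), forcing $\ov{S_0}\le \ov Q$ to be cyclic — contradicting that $\ov{S_0}$ is a fours group; alternatively, $W \le S_0$ would then be forced, contradicting the first part of the lemma. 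So $\ov Q$ is not cyclic of order $4$.

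The main obstacle I expect is the bookkeeping in the diagonal-subgroup step: pinning down exactly how transitivity of $Q$ on $\mathcal K$ together with $D$-invariance of $W$ forces the projections of $\ov W$ to agree across factors, and then turning "$W$ is diagonal and small" into a genuine contradiction with Lemma~\ref{lem:w1}(i)--(ii) rather than just a plausibility argument — this is where one has to be careful about whether $C_L(V_L)$-cosets and the exact identification $W = \langle(V_L\cap Q)^{O^p(L)}\rangle$ (from the proof of Lemma~\ref{lem:[WO_p][W^mO_p]}) are being used correctly.
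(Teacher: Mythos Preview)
Your plan correctly starts by contradiction with $W\le S_0$ and correctly invokes \fref{lem:w1}, but the heart of the argument is missing. Your ``numerical contradiction'' in the properly wreathed case is never made precise, and you yourself flag this as the main obstacle. The issue is not merely bookkeeping: there is no reason why ``$\ov W$ is small/diagonal'' should by itself contradict \fref{lem:w1}(i)--(ii). Likewise in the field-automorphism case, observing $[V_L,W,W]=1$ (which is immediate since $\ov W\le \ov{S_0}$ acts quadratically on the natural module) does not by itself give $W\le C_G(V_L)$; you need an extra step you have not supplied.

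The paper's mechanism is quite different and works uniformly for both hypotheses. One first observes that $W\le S_0$ together with $\ov Q$-transitivity on $\mathcal K$ (respectively, the natural-module structure when $s=1$) forces $[V_L,W]=C_{V_L}(W)=Z_0$. Now set $F=\bigcap_{g\in D} C_Q(V_L)^g$. The crucial and non-obvious computation is that $Z_0\le F$: for each $g\in D$ one has $[Z_0,[V_{L^g},Q]]\le[Z_0,W]=1$ by \fref{lem:w1}(ii), and then a short argument using the component decomposition $V_{L^g}=V_{L^g}^i[V_{L^g},Q]$ (or $[V_L,Q]>Z_0$ when $s=1$) yields $[V_{L^g},Z_0]=1$. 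This gives a $D$-normal series $Q\ge W\ge W\cap F\ge 1$ stabilized by $V_L$, whence $V_L\le O_p(D)$, contradicting \fref{lem:w1}(i). Your plan never discovers the intermediate subgroup $F$ nor the stabilized series, and the ``size'' heuristic does not substitute for it.

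For the final sentence your first argument is wrong: when $\ov Q$ is cyclic of order $4$ one has $|\ov Q\cap\ov{S_0}|=2$, so $\ov{S_0}\not\le\ov Q$ and there is no contradiction with $\ov{S_0}$ being a fours group. Your ``alternatively'' clause is the right idea: $\ov W\le\ov Q$ is generated by involutions (each $V_{L^g}\cap Q$ is elementary abelian), hence $\ov W\le\Omega_1(\ov Q)$, and in a cyclic group of order~$4$ this is the unique subgroup of order~$2$, which lies in the kernel of the $\ov Q$-action on $\mathcal K$ and hence in $\ov{S_0}$. That contradicts the first part.
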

\begin{proof} Set $F=  \bigcap_{g \in D} C_Q(V_L)^g .$

 Suppose that $W $ is  contained in $S_0$.  As $\ov Q$ normalizes $\ov W$ and acts transitively  on $\mathcal K$ when $\ov L$ is properly wreathed  and,  as  $V_L$ is the natural $\SL_2(q)$-module when $s=1$, and field automorphisms are present, the structure of $V_L$ yields
 $$[V_L,S_0]=[V_L,W]=C_{V_L}(W) = Z_0.$$
Suppose that $g \in D$. Then using \fref{lem:w1}(ii)  and $(V_L)^g =V_{L^g}$ yields

\begin{claim}\label{clm:notbase1}$[Z_0,[V_{L^g},Q]] \le [Z_0,W]=1.$\end{claim}

\medskip
 We also remark that as $W \le Q$, $Z_0 \le [V_L, Q]\le W = W^g\le S_0^g$ and $Z_0 \le Z(W)$. In particular, as $S_0^g$
 normalizes every element of $\mathcal K^g$, so does
 $Z_0$.  Therefore, for $1\le i \le s$, $Z_0$   also normalizes each $K_i^g$ and so also $[Y_L^g, K_i^g ]= (V_{L}^i)^g$.

   If $s=1$ and we have field automorphisms in $\ov{L^\circ}$, then $[V_L,Q] > Z_0$ and so     \fref{clm:notbase1} provides $Z_0\le C_Q([V_{L^g},Q]) = C_Q(V_{L^g})$. Thus $$[V_L,W]=  Z_0 \leq F$$ in this case.

   We will show that the same holds in the properly wreathed case.
Because $Q$ acts transitively on $\mathcal K ^g$,
 $$V_{L^g}= V_{L^g}^1 [V_{L^g},Q] =V_{L^g}^2[V_{L^g},Q].$$ As  $[Z_0,[V_{L^g},Q]]=1$ by \fref{clm:notbase1},
 \begin{eqnarray*}[V_{L^g},Z_0] &=& [V_{L^g}^1 [V_{L^g},Q], Z_0]\cap [V_{L^g}^2[V_{L^g},Q], Z_0] \\&=&[ V_{L^g}^1 , Z_0]\cap [V_{L^g}^2, Z_0] \le V_{L^g}^1\cap V_{L^g}^2=1.\end{eqnarray*} Hence $Z_0 \le C_{Q}(V_{L^g})$ and this implies that $$[V_L,W]=Z_0 \le F$$ in the properly wreathed case too.
 Therefore,
\begin{eqnarray*} [Q,V_L] &\le& W \cr [W,V_L]&=&Z_0 \le F\cap W\cr [F\cap W,V_L]&=&1. \end{eqnarray*}
Hence $V_L$ stabilizes the normal series $Q\ge W\ge W\cap F \ge 1$ in $D$ and so  $V_L \le O_p(D)$. But then $\langle V_L^D\rangle $ is a $p$-group contrary to \fref{lem:w1} (i).  We conclude that $W \not \le {S_0}$ as claimed.

If $q=s=2$  and $\ov Q $   is cyclic of order four, then, as $\ov W$ is generated by involutions, $\ov W= \ov Q\cap \ov S_0$, a contradiction. Thus $\ov Q $ is not cyclic of order $4$ in this case.
\end{proof}
We now reduce the properly wreathed case to one specific configuration which will be handled in detail in \fref{sec:o4}.

\begin{proposition}\label{prop:notwreath} Assume that $\ov L$ is properly wreathed and  unambiguous. Then $|\mathcal K|=2$, $q=2$, and $\ov W$ permutes $\mathcal K$ transitively by conjugation. Furthermore,   $\ov Q = \ov S\cong \Dih(8)$, $\ov {L^\circ} \cong \mathrm O_4^+(2)$ and $Y_L=V_L$ is the natural $\mathrm O_4^+(2)$-module.
\end{proposition}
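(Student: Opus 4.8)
The plan is to first pin down $q$ and $|\mathcal{K}|$ by a size argument, and then to identify $\ov Q$ using the unambiguity hypothesis, \fref{lem:not base}, and the known structure of $\OO_4^+(2)$ acting on its natural module. Throughout I may assume $|\mathcal{K}|\ge 2$. Since $L$ is properly wreathed it is not in the exceptional sub-case of the unambiguous wreath product case (which forces $|\mathcal{K}|=1$), so $Y_L=V_L$ and hence $\ov L$ acts faithfully on $V_L$. From \fref{lem:not base} I have $W\not\le S_0$, so the image $\ov W\le\ov Q$ of $W$ in $\ov L$ permutes $\mathcal{K}$ non-trivially; and, writing $V_0=V_L\cap Q$, combining \fref{lem:w1}(ii) (so $[Q,V_L]\le W$) with $V_L\cap Q\le W$ gives $[V_L,W]\le V_0=W\cap V_L<V_L$, while $Z=C_{V_L}(Q)\le C_{V_L}(W)$ with $Z\le Z(W)$.

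For the first (and hardest) step I would decompose $\mathcal{K}$ into $\ov W$-orbits $\mathcal{O}_1,\dots,\mathcal{O}_t$ and set $V_j=\bigoplus_{K\in\mathcal{O}_j}[V_L,K]$, so $V_L=V_1\oplus\dots\oplus V_t$ as $\ov W$-modules; identifying $V_j$ with a module induced from the stabiliser of one summand shows $|V_j/[V_j,W]|\le q^2$ and $|C_{V_j}(W)|\le q^2$, whence $|[V_L,W]|\ge q^{2(|\mathcal{K}|-t)}$ and $|C_{V_L}(W)|\le q^{2t}$, with $t<|\mathcal{K}|$ because $W\not\le S_0$. On the one hand $[V_L,W]\le V_0$ forces $|V_L:V_L\cap Q|\le q^{2t}$; on the other hand $Z\le C_{V_L}(W)$ is normalised by $Q$ and lies in $C_G(Q)$, so $Z$ cannot be too small by largeness of $Q$, and, exactly as in the proof of \fref{lem:not base}, were $[V_L,W]$, $Z$ and the $D$-invariant subgroups containing them small enough then $V_L$ would stabilise a short $D$-invariant normal series of $Q$, making $\langle V_L^D\rangle$ a $p$-group against \fref{lem:w1}(i). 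The hard part is to make these competing estimates precise; I expect them to be compatible only when $q=2$ and $|\mathcal{K}|=2$, with $\ov W$ — hence $\ov Q$ — transitive on $\mathcal{K}$. Then $\langle\mathcal{K}\rangle\cong\SL_2(2)\times\SL_2(2)\cong\Sym(3)\times\Sym(3)$, and $\ov W$ interchanges $[V_L,K_1]$ and $[V_L,K_2]$.

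Next I would identify $\ov Q$. Let $\ov Q_0\le\ov Q$ be the index-$2$ subgroup normalising $K_1$ and $K_2$; it normalises $[V_L,K_1]$ and $[V_L,K_2]$ and is faithful on their sum, so it embeds in $\GL_2(2)\times\GL_2(2)\cong\Sym(3)\times\Sym(3)$, giving $|\ov Q_0|\le 4$ and $|\ov Q|\le 8$. If $|\ov Q|=2$ then $\ov Q_0=1$ and $\ov{L^\circ}=O^p(\langle\mathcal{K}\rangle)\ov Q$ would have order $18$; but $\langle\mathcal{K}\rangle\le\ov L$ normalises the normal subgroup $\ov{L^\circ}$, and conjugating the swap in $\ov Q\le\ov{L^\circ}$ by a transposition of $\langle\mathcal{K}\rangle$ lands outside $\ov{L^\circ}$ — a contradiction — so $|\ov Q|\ge 4$. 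Since $L$ is unambiguous, $\ov Q$ is not a fours group, and by the last assertion of \fref{lem:not base} it is not cyclic of order $4$; hence $|\ov Q|=8$. Then $\ov Q_0$ is a Sylow $2$-subgroup of $\langle\mathcal{K}\rangle$, namely $\ov S_1\times\ov S_2\cong C_2\times C_2$, and $\ov Q$ is its extension by an involution swapping the two factors, so $\ov Q\cong\Dih(8)$; consequently $\ov{L^\circ}=(C_3\times C_3)\rtimes\ov Q\cong\Sym(3)\wr\Sym(2)\cong\OO_4^+(2)$ and $V_L$ is the natural $\OO_4^+(2)$-module.

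Finally, to get $\ov Q=\ov S$ I would argue that $V_L$ is an irreducible self-dual $\OO_4^+(2)$-module — the only proper $\langle\mathcal{K}\rangle$-submodules $[V_L,K_1]$, $[V_L,K_2]$ being interchanged by $\ov Q$ — on which $\ov{L^\circ}\cong\OO_4^+(2)$ stabilises a unique non-zero quadratic form, so $N_{\GL(V_L)}(\ov{L^\circ})=\OO(V_L)=\ov{L^\circ}$; since $\ov L$ is faithful on $V_L$ and normalises $\ov{L^\circ}$, this yields $\ov L=\ov{L^\circ}$, whence $\ov S\in\syl_2(\ov{L^\circ})$ has order $8$ and $\ov Q=\ov S\cong\Dih(8)$. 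The main obstacle is the size estimate of the second paragraph; once $q=|\mathcal{K}|=2$ is established, the remainder is essentially bookkeeping with the structure of $\OO_4^+(2)$ and its natural module.
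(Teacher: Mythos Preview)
Your proposal has a real gap precisely where you flag it: the size argument in the second paragraph. The bounds you record --- $|C_{V_L}(W)|\le q^{2t}$ and $|V_L/[V_L,W]|\le q^{2t}$, together with $t\le s/2$ coming from $\ov Q$-transitivity and $W\not\le S_0$ --- do not by themselves force $q=s=2$; for instance they are compatible with $q$ arbitrary and $s=2t$ for any $t$. Nor can the proof of \fref{lem:not base} be recycled as you suggest: that argument built the $D$-series $Q\ge W\ge W\cap F\ge 1$ by showing $[V_L,W]=Z_0\le F$, and this equality is exactly what breaks once $\ov W\not\le\ov{S_0}$ --- $[V_L,W]$ is then genuinely large and there is no obvious small $D$-invariant subgroup to catch it.

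The paper supplies an idea you do not have. Instead of analysing $V_L$ and $W$ inside $L$ alone, one fixes (via \fref{lem:not base}) a particular $g\in D$ with $\ov{V_{L^g}\cap Q}\not\le\ov{S_0}$ and plays the action of $V_{L^g}\cap Q$ on $V_L$ off against the action of $V_L\cap Q$ on $V_{L^g}$. After first showing $\ov{V_{L^g}\cap Q}\cap\ov{S_0}\ne 1$ (its failure already forces the ambiguous $\Omega_4^+(2)$ configuration), one obtains $|C_{V_L}(V_{L^g}\cap Q)|\le q^{s/p}$, whence $|(V_L\cap Q)C_{L^g}(V_{L^g})/C_{L^g}(V_{L^g})|\ge q^{2s-1-s/p}$. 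The second crucial input is the $p$-rank bound \fref{lem:prank}, which caps this by $q^{s}$; comparing gives $s\le p/(p-1)$, so $s=p=2$, and a further argument still exploiting the $V_L$--$V_{L^g}$ interplay forces $q=2$. This symmetric amalgam-style estimate together with \fref{lem:prank} is what is missing from your plan. Your treatment from $q=s=2$ onward --- excluding $|\ov Q|\in\{2,4\}$ and showing $\ov Q=\ov S$ via $N_{\GL(V_L)}(\ov{L^\circ})=\ov{L^\circ}$ --- is correct and close in spirit to the paper's endgame.
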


\begin{proof}
Since $Q$ permutes $\mathcal K$ transitively by conjugation and $S_0$ normalizes $Q$,   we have

\begin{claim}\label{clm:index [Q,S_0]}\begin{enumerate}
\item $\ov {Q \cap S_0}$ contains $[\ov Q,\ov {S_0}]$;
 \item$|\ov{S_0}:\ov {Q\cap S_0}| \le |\ov{S_0}:[\ov Q,\ov S_0]|\le q;$ and
 \item $\ov{[Q,S_0]}C_{\ov L}(\ov{K_i})/C_{\ov L}(\ov{K_i})\in \syl_p(\ov{K_i}C_{\ov L}(\ov{K_i})/C_{\ov L}(\ov{K_i}))$.\qedc\end{enumerate}\end{claim}
\medskip

As $W=\langle V_{L^g}\cap Q\mid g \in D\rangle$, \fref{lem:not base} implies  there exists $g \in D$ such that $V_{L^g}\cap Q \not \le S_0$. We fix this $g$.

\begin{claim}\label{clm:meet base} We have $\ov {V_{L^g} \cap Q} \cap \ov {S_0}\ne 1$.\end{claim}

\medskip

Suppose that $\ov {V_{L^g} \cap Q }\cap \ov {S_0}= 1$. Then, as $\ov {Q \cap S_0}$ and  $\ov {V_{L^g} \cap Q}$ normalize each other, $\ov {V_{L^g} \cap Q}$ centralizes $\ov {Q \cap S_0}$.
If $\ov {V_{L^g} \cap Q}$ normalizes  some $\ov{K_i}\in \mathcal K$, then, as $\ov Q$ acts transitively on $\mathcal K$ and normalizes $\ov {V_{L^g} \cap Q}$, $\ov {V_{L^g} \cap Q}$ normalizes every member of $\mathcal K$.
   As $\ov {V_{L^g} \cap Q}$ centralizes $\ov{[Q,S_0]}$, \ref{clm:index [Q,S_0]} (iii) implies that $$\ov{ V_{L^g} \cap Q} \le \ov{[Q,S_0]}C_{\ov L}(\ov{K_i}).$$ Since $Q$ acts transitively on $\mathcal K$, this is true for each $\ov{K_i} \in \mathcal K$.
 Thus $$\ov{V_{L^g} \cap Q} \le \bigcap_{i=1}^s  \ov{[Q,S_0]}C_{\ov L}(\ov{K_i}) = \bigcap_{i=1}^s  \ov{S_i}C_{\ov L}(\ov{K_i}) =\ov{S_0},$$ which contradicts the choice of $g\in D$.

 Hence $\ov { V_{L^g} \cap Q}$ does not normalize any member of $\mathcal K$. As $\ov B$ is a direct product we calculate that $C_{\ov S_0}(\ov {V_{L^g} \cap Q})$ has index at least $q^{p-1}$ in $\ov{S_0}$. However \fref{clm:index [Q,S_0]} (ii) states that $\ov {Q \cap S_0}$ has index at most $q$ in $\ov{S_0}$  and, as this subgroup  is centralized by $\ov{V_{L^g}\cap Q}$, we deduce that $$p=2.$$ Furthermore,  as $\ov {V_{L^g} \cap Q}$ does not normalize any member of $\mathcal K$, if $s> 2$, we have   $C_{\ov S_0}(\ov {V_{L^g} \cap Q})$ has index at least $q^{2} $ in $\ov{S_0}$, and so we must have $$s=2.$$

 Since $\ov {V_{L^g} \cap Q}$ centralizes $[\ov{S_0},\ov{Q}]$ by \fref{clm:index [Q,S_0]}(iii),  no element in  $\ov {V_{L^g} \cap Q}$ can act as a non-trivial field automorphism on $\ov {K_1}$ and so  we   infer from $\ov {V_{L^g} \cap Q} \cap \ov {S_0}= 1$, that  $|\ov {V_{L^g} \cap Q}|=2$. In particular, $|C_{V_L}(V_{L^g} \cap Q)|= q^2$ as $V_{L^g} \cap Q$ exchanges $V_L^1$ and $V_L^2$.

We know that $|V_{L^g}|= q^4$. As  $|[V_{L^g},Q]|\ge q^3$, we have $$|V_{L^g}:V_{L^g}\cap Q|\le q,$$ and we have just determined that  $$|V_{L^g} \cap Q:V_{L^g} \cap Q\cap C_G(V_L)|=|\ov {V_{L^g} \cap Q}|=2.$$
Hence $V_{L^g} \cap Q \cap C_G(V_L)  $ has order at least $2^{3a-1}$, where $q=2^a$.

Assume that $a \ne 1$.  Then, as $V_{L^g}^1$ has order $q^2$, $$V_{L^g} \cap Q \cap C_G(V_L) \cap V_{L^g}^1\ne 1.$$
It follows that $V_L\cap Q $  normalizes both $K_1^g$ and $K_2^g$. As $(V_L\cap Q)C_{L^g}(V_{L^g})/C_{L^g}(V_{L^g})$ is normalized by $Q$ and $Q$ permutes $\{K_1^g,K_2^g\}$ transitively, $(V_L\cap Q)C_{L^g}(V_{L^g})/C_{L^g}(V_{L^g})$ does not centralize $K_i^g /C_{L^g}(V_{L^g})$ for $i=1,2$. Thus $|C_{V_{L^g}^i}(V_L\cap Q)|\le q$ for $i=1,2$. But then $$2^{3a-1}\le |V_{L^g} \cap Q \cap C_G(V_L) |\le | C_{V_{L^g}}(V_L\cap Q)| \le 2^{2a},$$ which contradicts $a \ne 1$. We conclude that    $q=s=2$ and $|\ov {V_{L^g} \cap Q}|=2$. Furthermore, $\ov{V_{L^g}\cap Q}$ is centralized by $\ov {Q}$ and so $\ov Q$ is elementary abelian of order $4$.  It follows that $\ov{L^\circ}
\cong \Omega_4^+(2)$ and $V_L$ is the natural module.   Hence  $L$ is  ambiguous and we conclude that  $\ov {V_{L^g} \cap Q} \cap \ov {S_0} \ne 1$.\qedc

\begin{claim}\label{clm:cent bound}  We have
$|C_{V_L}(V_{L^g} \cap Q)| \le q^{s/p}.$
\end{claim}

\medskip

We know $\ov {V_{L^g} \cap Q} \not \le \ov{S_0}$ and $\ov {V_{L^g} \cap Q} \cap \ov S_0\ne 1$ by \fref{clm:meet base}. As $\ov {V_{L^g} \cap Q}$ is normalized by $\ov Q$, $\ov {V_{L^g} \cap Q} \cap \ov S_0\ne 1$ implies that $$ C_{V_L}(\ov { V_{L^g} \cap Q}) =C_{Z_0}(\ov {V_{L^g} \cap Q}).$$
If some element  $d \in V_{L^g} \cap Q$ induces a non-trivial field automorphism on $\ov K_i$ for some $\ov{K_i}\in \mathcal K$, then
$C_{V_L^i}(  {V_{L^g} \cap Q}) \le C_{Z_i}(d)$ has order $q^{1/p}$ and the result follows by transitivity of $\ov Q$ on $\mathcal K$. On the other hand, if  $d \in  V_{L^g} \cap Q$ has an orbit of length $p$ on $\mathcal K$, then $C_{\langle (V_L^1)^{\langle d \rangle}\rangle}( {V_{L^g} \cap Q}) \le C_{\langle Z_1^{\langle d \rangle}\rangle}(d)$ which has order $q$.  Using the transitivity of $Q$ on $\mathcal K$, we deduce $|C_{V_L}(V_{L^g} \cap Q)| \le q^{s/p}$. This proves the result.
\qedc

As $Q$ acts transitively on the $\{V_i\mid 1\le i \le s\}$, we have $V_L = [V_L,Q]V_1$. By \fref{clm:meet base} $\ov{Q} \cap \ov{S_0} \not= 1$ and so $|[V_1,Q]| \geq q$. In particular
$$|V_L:[V_L,Q]|\le q.$$

Since  $V_L\cap Q \cap C_{L^g}(V_{L^g}) \le  C_{V_L}(V_{L^g} \cap Q)$, \fref{clm:cent bound} and $|V_L|=q^{2s}$ together give $$|(V_L\cap Q)C_{L^g}(V_{L^g})/C_{L^g}(V_{L^g})| \ge q^{2s-1-s/p}.$$
On the other hand,  by \fref{lem:prank} the $p$-rank of $\ov L$ is $as$ where $q=p^a$.  Hence $$s \ge 2s-1-s/p$$ and so $$s=p=2.$$
  In particular,   \fref{lem:prank} implies

\begin{claim}\label{clm:rank}   $|(V_L\cap Q)C_{L^g}(V_{L^g})/C_{L^g}(V_{L^g})| = q^{2}=2^{2a}$.\end{claim}

\medskip

Assume that $q> 2$. Since $S^g/S_0^g$ has $2$-rank $2$ and $V_L\cap Q$ is elementary abelian,   $(V_L \cap Q \cap S_0^g)C_{L^g}(V_{L^g})/C_{L^g}(V_{L^g})$ has rank at least $2a-2\ne 1$.
 Since $V_L \cap Q \cap S_0^g$ is normalized by   $Q$ and $Q$ permutes $\{K_1^g,K_2^g\}$ transitively, $ V_L \cap Q \cap S_0^g$ contains an element which projects non-trivially on to both $S_1^gC_{L^g}(V_{L^g})/C_{L^g}(V_{L^g})$ and $S_2^gC_{L^g}(V_{L^g})/C_{L^g}(V_{L^g})$. 
Thus $V_L\ge [V_L\cap Q, [V_{L^g},Q]]\ge Z_0^g$. But then, using \fref{clm:cent bound} yields the contradiction  $$q^2= |Z_0^g| \le |C_{V_L}(V_{L^g} \cap Q)| \le q.$$ Thus $q=s=2$.
 It follows from \fref{lem:not base} that $W$ is transitive on $\mathcal{K}$ and $\ov Q \cong \Dih(8)$ or $\ov Q$ is elementary abelian of order $4$.  The second possibility gives $\ov{L^\circ} \cong \Omega^+_4(2)$, which is ambiguous. This proves \fref{prop:notwreath}.
\end{proof}

Next we deal with the case $s= 1$.

\begin{proposition}\label{prop:fieldauto} Suppose that $O^p( \ov{ L^\circ}) \cong \SL_2(q)$ where  $q= p^a= r^p$, $V_L = Y_L$ is the natural $O^p( \ov {L^\circ})$-module and that some element of $\ov {L^\circ}$ induces a non-trivial field automorphism on  $O^p( \ov {L^\circ})$. Then  $p =2 = r$.
\end{proposition}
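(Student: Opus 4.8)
The plan is to run the same ``size versus rank'' argument that drove the properly wreathed case, but now with $s=1$ and the wreathing replaced by the presence of a field automorphism. We are in the situation $O^p(\ov{L^\circ})\cong\SL_2(q)$ with $q=p^a=r^p$, $V_L=Y_L$ the natural module, and some element of $\ov{L^\circ}$ acting as a non-trivial field automorphism. By \fref{lem:not base} (applicable precisely because field automorphisms are present) we have $W\not\le S_0$, where $S_0=S\cap B$ and $\ov B=\langle\mathcal K\rangle$ is the single $\SL_2(q)$. Since $W=\langle V_{L^g}\cap Q\mid g\in D\rangle$, there is a fixed $g\in D$ with $V_{L^g}\cap Q\not\le S_0$, i.e.\ $\ov{V_{L^g}\cap Q}$ contains an element acting as a non-trivial field automorphism on $\ov B$.

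First I would pin down the centralizer bound: if $V_{L^g}\cap Q$ induces a non-trivial field automorphism on the $\SL_2(q)$, then acting on the natural module $V_L$ it centralizes at most a subfield of index $p$, so $|C_{V_L}(V_{L^g}\cap Q)|\le q^{2/p}=r^2$ (the natural $\SL_2(q)$-module is a $2$-dimensional $\GF(q)$-space, and a field automorphism of order $p$ fixes a $2$-dimensional $\GF(r)$-subspace). On the other side, I would estimate $|(V_L\cap Q)C_{L^g}(V_{L^g})/C_{L^g}(V_{L^g})|$ from below. Here one uses $|V_L|=q^2$, $|[V_L,Q]|\ge q$ (since $\Omega_1(Z(S))\le V_L$ by \fref{lem:VL sub QL'}(ii) forces $V_L\cap Q$ to be reasonably large, or more directly $|V_L:V_L\cap Q|\le q$ from the structure of the natural module and $V_L\not\le Q$), together with $V_L\cap Q\cap C_{L^g}(V_{L^g})\le C_{V_L}(V_{L^g}\cap Q)$ and the centralizer bound just obtained, to get $|(V_L\cap Q)C_{L^g}(V_{L^g})/C_{L^g}(V_{L^g})|\ge q^{2-1}/r^2=q/r^2=r^{p-2}$ up to the precise bookkeeping. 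But by \fref{lem:prank} the $p$-rank of $\ov L$ (equivalently of $\SL_2(q)$) is exactly $a$, so this image is an elementary abelian $p$-group of rank at most $a$, giving $a\ge$ (the exponent of $r$ coming out of the lower bound). Translating $r=p^{a/p}$, this inequality collapses to $a\le a/p\cdot(\text{something})$, forcing $p=2$ and then $a/p=1$, i.e.\ $q=4$, $r=2$, so $p=2=r$.

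The routine part is the exact constants in the two bounds: one must be careful that ``$|V_L:V_L\cap Q|\le q$'' really holds for the natural $\SL_2(q)$-module with $V_L\not\le Q$ (it does, since $V_L\cap Q\supseteq[V_L,Q\cap S_0]$ and a single root subgroup already has commutator of order $q$ on the natural module), and that the element of $V_{L^g}\cap Q$ witnessing $\not\le S_0$ may be taken to act as a genuine field automorphism rather than a field-automorphism-times-inner element — but any coset of a field automorphism of order $p$ still has fixed points of index $\ge p$ on the module, which is all that is needed. I would also note, as in \fref{prop:notwreath}, that one should check $V_{L^g}\cap Q\cap S_0$ behaves correctly; but here $s=1$ so there is no transitivity-on-$\mathcal K$ subtlety, and the argument is cleaner than the wreathed case.

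The main obstacle I anticipate is not the inequality itself but ensuring the lower bound on $|(V_L\cap Q)C_{L^g}(V_{L^g})/C_{L^g}(V_{L^g})|$ is genuinely $\ge q/r^2 \cdot(\ge 1)$ and not weaker: this hinges on showing $|V_L:V_L\cap Q\cap C_{L^g}(V_{L^g})|$ is at most $q\cdot r^2$ (product of the two indices $|V_L:V_L\cap Q|\le q$ and $|V_L\cap Q:V_L\cap Q\cap C_{L^g}(V_{L^g})|\le|C_{V_L}(V_{L^g}\cap Q)|\le r^2$), and then comparing $q^2/(q\cdot r^2)=q/r^2=r^{p-2}$ against the rank bound $p^a=q$, $\log_p$ giving $p-2\le$ (constant) and hence $p=2$. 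Once $p=2$, $r^p=q=r^2$ forces nothing further directly, but combined with $a=\log_p q\ge$ the exponent, the only surviving case is $a=2$, i.e.\ $q=4$, $r=2$, which is exactly $p=2=r$.
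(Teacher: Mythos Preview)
Your size-versus-rank inequality does not close. With your stated bounds $|V_L:V_L\cap Q|\le q$ and $|C_{V_L}(V_{L^g}\cap Q)|\le r^2$, the lower bound on $|(V_L\cap Q)C_{L^g}(V_{L^g})/C_{L^g}(V_{L^g})|$ is $q^2/(q\cdot r^2)=r^{p-2}$, while \fref{lem:prank} gives the upper bound $p^a=q=r^p$. The resulting inequality $r^{p-2}\le r^p$ is vacuous. Even if you sharpen $|V_L:V_L\cap Q|\le r$ (which is available, since $[V_L,Q]$ picks up the field-automorphism commutator and has index at most $r$ in $V_L$) and push $|C_{V_L}(V_{L^g}\cap Q)|\le r$ by first proving $\ov{V_{L^g}\cap Q}\cap\ov{S_0}\ne 1$ (an analogue of \fref{clm:meet base} that you have not established), you obtain $r^{2p-2}\le r^p$, i.e.\ $p\le 2$. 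At $p=2$ this is an equality, so the rank bound cannot distinguish $r=2$ from $r>2$; your closing sentence ``the only surviving case is $a=2$'' is unsupported.

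The paper takes a different route that avoids rank counting altogether. Assuming $r^p>4$, it observes that $Z_0=C_{V_L}(S_0)$ centralizes $W\cap S_0$, which has index $p$ in $W$ (since $|WS_0:S_0|=p$). Setting $W_1=\langle Z_0^D\rangle$, every $D$-conjugate of $Z_0$ therefore centralizes a subgroup of index at most $p$ in $V_L\cap Q\le W$. But $|V_L\cap Q:Z_0|\ge r^{p-1}>p$ once $r^p>4$, and on the natural module no element of $S\setminus Q_L$ centralizes more than $Z_0$ inside $V_L\cap Q$; hence each such conjugate lies in $Q_L$, so $W_1\le Q_L$ and $[V_L,W_1]=1$. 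This forces $[W,Z_0]=[W,W_1]=1$, and since any element of $S\setminus S_0$ moves $Z_0$ (a field automorphism does not fix a full $\GF(q)$-line), $W\le S_0$, contradicting \fref{lem:not base}. The mechanism is thus a ``small-index centralizer forces containment in $Q_L$'' argument, not a comparison of orders against $p$-rank.
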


\begin{proof} We may assume that $r^p > 4$.  By \fref{lem:not base} we have that $W \not\leq S_0$ and, as $W$ is generated by elements of order $p$, we have that $|S_0W : S_0| = p$.  As $Q$ is normal in $S$, $1 \not = \ov{Q} \cap \ov{S}_0$, so $Z_0 \leq Q \cap Y_L$. Furthermore, as $\ov Q$ contains elements which act as field automorphisms on $O^p(\ov {L^\circ})$,  $$|V_L \cap Q :Z_0| \ge |[V_L,Q]:Z_0|\geq r^{p-1}> p,$$ by assumption. Thus no element in $S \setminus Q_L$ centralizes a subgroup of index $p$ in $V_L \cap Q$.

Set $W_1 = \langle Z_0^{D} \rangle$. As $Z_0$ centralizes $W \cap S_0$, every element of $Z_0$ centralizes a subgroup of index at most $p$ in $W$. As $W_1$ is generated by conjugates of $Z_0$, and these conjugates all contain elements which centralize a subgroup of index at most $p$ in $W$, $W_1$ is generated by elements which centralize a subgroup of index at most $p$ in  $V_L \cap Q$.  As no element in $S \setminus Q_L$ has this property, we conclude that $W_1 \leq Q_L$. Hence  $[V_L, W_1] = 1$. In particular $[V_L \cap Q, W_1] = 1$ and so also $[W, Z_0]=[W,W_1] = 1$.  This shows $W \leq S_0$ and contradicts \fref{lem:not base}.
\end{proof}

We collect the results of this section in the following proposition:

\begin{proposition}\label{prop:unambiguous} Suppose that $L \in \mathcal{L}_G(S)$, $L \not \le N_G(Q)$, $V_L \not\leq Q$ and $ L$ is in the unambiguous wreath product case. Then one of the following holds:
\begin{itemize}
\item[(i)] $\ov {L^\circ} \cong \mathrm O_4^+(2)$ ,   $\ov Q = \ov S \cong \Dih(8)$ and $Y_L=V_L$ is the natural module.
\item[(ii)]  $\ov {L^\circ} \cong \Gamma \SL_2(4)$, $V_L$ is the natural $\SL_2(4)$-module and $|Y_L : V_L| \leq 2$.
\item[(iii)] $\ov {L^\circ} \cong \SL_2(4)$, $V_L$ is the natural module and $|Y_L : V_L| = 2$.
\end{itemize}
\end{proposition}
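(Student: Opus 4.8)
The plan is to obtain the statement by combining \fref{prop:notwreath} and \fref{prop:fieldauto} with the definition of the (un)ambiguous wreath product case, organised around the case distinction $|\mathcal K|>1$ versus $|\mathcal K|=1$. If $\ov L$ is properly wreathed, then $\ov L$ is properly wreathed and unambiguous, so \fref{prop:notwreath} applies directly and yields $q=2$, $|\mathcal K|=2$, $\ov Q=\ov S\cong\Dih(8)$, $\ov{L^\circ}\cong\OO_4^+(2)$ and $Y_L=V_L$ the natural module, which is conclusion~(i). No separate argument is needed to exclude the possibility $|Y_L:V_L|=2$ here, since \fref{prop:notwreath} already asserts $Y_L=V_L$.

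So from now on $|\mathcal K|=1$, say $\langle\mathcal K\rangle=K_1\cong\SL_2(q)$. Then $\ov{L^\circ}=O^p(K_1)\ov Q$, so $\ov{L^\circ}/O^p(K_1)$ is a $p$-group and hence $O^p(\ov{L^\circ})=O^p(K_1)$; and by the definition of a natural $\SL_2(q)$-wreath product module, $V_L=[V_L,K_1]$ is the natural $\SL_2(q)$-module, which is what conclusions~(ii) and~(iii) record about $V_L$. It then remains to identify $\ov{L^\circ}$ and to bound $|Y_L:V_L|$. If $|Y_L:V_L|=2$, then $p=2$ and the third bullet of the wreath product case already gives $\ov{L^\circ}\cong\SL_2(4)$ or $\Gamma\SL_2(4)$; since $\ov{L^\circ}=O^2(\SL_2(q))\ov Q$ with $\ov Q$ a $2$-group, while $\ov{L^\circ}$ has order divisible by $5$ and at most $120$, an order comparison forces $q=4$, so $V_L$ is the natural $\SL_2(4)$-module and we are in~(iii) or in~(ii) with $|Y_L:V_L|=2$.

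The remaining sub-case, $|\mathcal K|=1$ with $Y_L=V_L$, is where the substantive work lies, and where I expect the main difficulty. The aim is to show $\ov{L^\circ}$ induces a non-trivial field automorphism on $O^p(\ov{L^\circ})$, so that \fref{prop:fieldauto} applies. Since $\ov{L^\circ}$ acts faithfully on $V_L=Y_L$ and $V_L$ is the natural $K_1$-module, Schur's lemma puts $C_{\ov{L^\circ}}(O^p(K_1))$ inside the group of scalars $\GF(q)^\times$, a $p'$-group; as $\ov{L^\circ}/O^p(K_1)$ is a $p$-group this forces $C_{\ov{L^\circ}}(O^p(K_1))=Z(O^p(K_1))$, so $\ov{L^\circ}/Z(O^p(K_1))$ embeds in $\Aut(O^p(K_1))$ with $p$-group quotient over $\Inn$. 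As the $p$-part of $\Out(\SL_2(q))$ is realised by field automorphisms, either no such outer automorphism occurs and $\ov{L^\circ}\cong\SL_2(q)$ --- but then $(\ov{L^\circ},Y_L,V_L)$ is the first of the two ambiguous configurations, contrary to hypothesis --- or some element of $\ov{L^\circ}$ induces a non-trivial field automorphism on $O^p(\ov{L^\circ})$. The small values $q\in\{2,3\}$, where $O^p(\SL_2(q))$ is proper, need to be treated separately, but there the faithful action on the two-dimensional module again pins $\ov{L^\circ}$ down to $\SL_2(q)$ (using $\ov Q\neq1$, since $\ov{L^\circ}=\langle\ov Q^{\ov L}\rangle\neq1$), which is again ambiguous. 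In the field-automorphism case $p$ divides $a$, so $q=r^p$ with $r=p^{a/p}$, and \fref{prop:fieldauto} gives $p=2=r$, hence $q=4$; then $\ov{L^\circ}=O^2(\SL_2(4))\ov Q=\Alt(5)\ov Q$ is an extension of $\Alt(5)$ by a non-trivial $2$-group acting faithfully through $\Out(\Alt(5))\cong\Z/2$, so $\ov{L^\circ}\cong\Sym(5)\cong\Gamma\SL_2(4)$, giving~(ii) with $|Y_L:V_L|=1$. Apart from this last reduction and the $\SL_2(q)$-automorphism bookkeeping it requires, the proof is a direct appeal to the two preceding propositions and to the definition of ``unambiguous''.
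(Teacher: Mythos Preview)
Your proposal is correct and follows essentially the same route as the paper: split on $|\mathcal K|>1$ versus $|\mathcal K|=1$, invoke \fref{prop:notwreath} for the former, and in the latter use unambiguity together with the third bullet of the wreath-product definition and \fref{prop:fieldauto}. The paper's proof is considerably terser --- it simply writes ``Now (ii) holds by \fref{prop:fieldauto}'' without spelling out that $\ov{L^\circ}\not\cong\SL_2(q)$ forces a genuine field automorphism and $p\mid a$ --- whereas you supply that bookkeeping (the Schur/endomorphism argument for $C_{\ov{L^\circ}}(K_1)$, the $\Out(\SL_2(q))$ analysis, and the separate handling of $q\in\{2,3\}$); this extra care is sound but not a different strategy.
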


\begin{proof} If $|\mathcal{K}| > 1$, then (i) holds by \fref{prop:notwreath}, so we may assume that  $|\mathcal{K}| = 1$. As $L$ is unambiguous, either $Y_L \not= V_L$ or $\ov{L^\circ} \not\cong \SL_2(q)$. If $Y_L \not= V_L$, then by definition of the wreath product case, (ii) or (iii) holds. So we may assume $Y_L = V_L$ and $\ov{L^\circ} \not\cong \SL_2(q)$. Now (ii) holds by  \fref{prop:fieldauto}.
\end{proof}

\section{$\ov {L^\circ} \cong \OO^+_4(2)$}\label{sec:o4}

In this section we analyse the configuration  from \fref{prop:unambiguous}(i).  We prove

\begin{proposition}\label{prop:O4} Suppose that  $L \in \mathcal{L}_G(S)$, $L \not \le N_G(Q)$  and $ L$ in the unambiguous wreath product case. If  $Y_L \not\leq Q$ and $\ov {L^\circ} \cong \OO^+_4(2)$, then $G \cong \Sym(8)$, $\Sym(9)$ or $\Alt(10)$.
\end{proposition}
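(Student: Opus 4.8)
The starting configuration is very rigid: $\ov{L^\circ}\cong\OO_4^+(2)\cong\SL_2(2)\times\SL_2(2)$, $\ov S=\ov Q\cong\Dih(8)$ and $Y_L=V_L$ is the natural $\OO_4^+(2)$-module, a $4$-dimensional $\GF(2)$-module with $V_L=V_L^1\oplus V_L^2$ where each $V_L^i$ is a natural $\SL_2(2)$-module and $Q$ interchanges $K_1,K_2$. The plan is to work inside $\tilde C=N_G(Q)$ and the subgroup $D=\langle V_L^{\tilde C}\rangle(L\cap N_G(Q))$ introduced in the notation, and to pin down the isomorphism type of $\langle V_L^D\rangle$ (which is not a $p$-group by \fref{lem:w1}(i)) together with the embedding of $Q$. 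First I would determine $Q_L=O_2(L)$ precisely: since $V_L$ is irreducible under $K=L^\circ$ and $\Omega_1(Z(Q_L))=Y_L=V_L$, the structure lemmas \fref{lem:VL sub QL'} and \fref{lem:[WO_p][W^mO_p]} force either $Q_L=V_L$ or $V_L\le U_L'$ with tight control on $U_L/V_L$ and on the chief factors of $Q_L/V_L$; combined with \fref{lem:dualchieffactors} (quadratic action, $W$ not quadratic on $V_L$) this should leave only a bounded list of possibilities for $Q_L$, and hence for $S$.

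The second and central step is to analyse $\tilde C=N_G(Q)$. One knows $Q=O_2(\tilde C)$, $C_G(Q)\le Q$, and $Q$ is weakly closed in $S$. The element $Z=C_{V_L}(Q)$ and the subgroups $W=\langle(V_L\cap Q)^D\rangle$, $U_L=\langle(W\cap Q_L)^L\rangle$ give the scaffolding: using \fref{lem:w1}(ii),(iii) one gets $[Q,\langle V_L^D\rangle]\le W$ and $W\not\le C_G(V_L)$, so $O^2(\langle V_L^D\rangle)$ acts nontrivially on $W/Z$-type sections and in particular $\langle V_L^D\rangle$ has a component or a nontrivial $O^2$. I would then identify $\langle V_L^D\rangle/O_2$: since $V_L^i$ is a natural $\SL_2(2)$-module, the generation by $\tilde C$-conjugates of $V_L$ of order $16$ with $Q$ acting as it does is exactly the kind of configuration that produces $\Alt(6)$, $\Sym(6)$, $\Sym(8)$-type sections. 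Here \fref{lem:splitA6} is clearly meant to be used: it shows an extension $O_2=2^4$ by $\Alt(6)$ on the hook-part of the permutation module splits, which is the key local recognition step inside $N_G(Q)$ (giving, e.g., $N_G(Q)$ a split extension $2^4{:}\Sym(6)$ or similar). Once $N_G(Q)$ is identified up to a short list, the large-subgroup axioms (ii)–(iii) for $Q$ together with $Q=O_2(N_G(Q))$ should eliminate all but a couple of candidates.

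The third step is the global recognition: having $N_G(Q)$ and enough of $L$, I would build a characteristic-$2$ amalgam / use a Goldschmidt-type or $F$-module argument on $\langle L,N_G(Q)\rangle$, and then invoke a known recognition theorem for the target groups. The candidates $\Sym(8)$, $\Sym(9)$, $\Alt(10)$ are exactly the groups of characteristic $2$ with a large subgroup $Q$ of order $2^6$ (for $\Sym(8)$: $Q$ the natural $O_2$ of the stabiliser of a partition into pairs, of class $3$ as advertised in the introduction) and with a $2$-local of shape $\OO_4^+(2)$ acting on a natural module $2^4$. I would match the order of $S$, the fusion of $Q$, and the chief series of $Q_L$ computed in Step 1 against these three groups, ruling out $\Alt(8)$, $\Sym(10)$, etc., by the requirement that $V_L\not\le Q$ and that $L$ be \emph{unambiguous} (which already removed the $\Omega_4^+(2)\cong\SL_2(2)\times\SL_2(2)$ tensor-product read of this very module — so the relevant $L$ here is forced to have $\ov Q\cong\Dih(8)$, not the fours group). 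Finally, a theorem recognising $\Sym(n)$ or $\Alt(n)$ from the structure of a point/partition stabiliser and its action (an amalgam result or a result from the $\mathcal K$-group literature cited as \cite{GoLyS2} / \cite{stru}) closes the proof.

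\textbf{Main obstacle.} The hard part is the middle step: controlling $\langle V_L^D\rangle$ and $N_G(Q)$ without yet knowing $G$. One must carefully track how many $\tilde C$-conjugates of $V_L$ are needed to generate, how $Q$ permutes their natural $\SL_2(2)$-summands, and use the order/rank bounds (as in \fref{prop:notwreath}, via \fref{lem:prank} and the centralizer estimates) to force the permutation action to be small; getting from "$\langle V_L^D\rangle$ is a characteristic-$2$ group with these quadratic $2F$-modules" to "$\langle V_L^D\rangle/O_2\cong\Alt(6)$ or $\Sym(6)$ on the hook module" — and then splitting via \fref{lem:splitA6} — is where the real work lies, and it is also where one must be most careful to keep the analysis uniform across the three target groups rather than splitting prematurely.
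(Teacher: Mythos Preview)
Your plan for Step~1 (pinning down $Q_L$) is essentially what the paper does, and you are right that \fref{lem:[WO_p][W^mO_p]} and the quadratic-action lemmas are the tools that force $Q_L=Y_L$, $|S|=2^7$, $|Q|=2^6$.

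The serious misprediction is in Step~2. You expect $N_G(Q)$ to have a section of shape $2^4{:}\Alt(6)$ or $2^4{:}\Sym(6)$ and to invoke \fref{lem:splitA6}. In this configuration that does not happen: the paper shows that $W$ is extraspecial of order $2^5$ of $+$-type with $W/Z=J(Q/Z)$, hence $W\trianglelefteq N_G(Q)$ and $N_G(Q)/W$ embeds in $\OO_4^+(2)\cong\Sym(3)\wr 2$; since $O_2(N_G(Q)/W)\ne 1$ one gets only $N_G(Q)/Q\cong\Sym(3)$. So $N_G(Q)$ is tiny, there is no $\Alt(6)$-section here, and \fref{lem:splitA6} plays no role in this section (it is used in Sections~5 and~6, for the $\Gamma\SL_2(4)$ and $\SL_2(4)$ cases, where an auxiliary $2$-local $N_G(E)$ of shape $2^4{:}\Alt(6)$ genuinely arises).

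Consequently Step~3 as you describe it would not go through either. The paper does not build an amalgam or use a Goldschmidt-type recognition; instead, once $Q_L=Y_L$ and $N_G(Q)/Q\cong\Sym(3)$ are known, $L$ splits over $Q_L$ and the isomorphism type of $S$ is determined: $S$ is a Sylow $2$-subgroup of $\Alt(10)$. After checking $O(G)=1$ via coprime action on a fours group of $2$-central involutions, the paper simply quotes the classical Sylow-$2$-structure classifications: if $G$ has no subgroup of index $2$, Mason's theorem \cite{Mas} gives $F^*(G)\in\{\Alt(10),\Alt(11),\PSL_4(r),\PSU_4(r)\}$ and the last three are eliminated by looking at $C_G(z)$; if $G$ has a subgroup of index $2$, then $G'$ has Sylow $2$-subgroups of type $\Alt(8)$ and Gorenstein--Harada \cite{GoHa} gives $F^*(G)\in\{\Alt(8),\Alt(9),\PSp_4(3)\}$, with $\PSp_4(3)$ eliminated. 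The identification is thus via the Sylow $2$-subgroup, not via $\langle V_L^D\rangle$ or any amalgam.
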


\begin{proof}
 By \fref{prop:unambiguous} we have  $\ov {Q}\cong \Dih(8)$.  Since $Y_L$ is the natural $\OO_4^+(2)$-module for $L/C_L(Y_L)$ and $V_L$ is also the wreath product module for $L/C_L(Y_L)$ with respect to $\{\ov{K_1}, \ov {K_2}\}$, we have the following well known facts.

 \begin{claim}\label{clm:YLprops} \begin{enumerate}
 \item $|[Y_L, Q]| = 2^3$, $|[Y_L,Q,Q]|= 2^2$ and $C_{Y_L}(Q)=[Y_L,Q,Q,Q]$ has order $2$.
 \item $[Y_L,S_0]= C_{Y_L}(S_0)$ has order $2^2$;
 \item $|[Y_L, Q']|= 2^2$;
 \item $C_L([Y_L, Q]) \le C_L(Y_L)$.
 \end{enumerate}
 \end{claim}

  Our first aim is to prove

\begin{claim}\label{clm:W4} $\ov W$ is elementary abelian of order $2^2$, $[Y_L,W]= [Y_L,Q]= Y_L\cap Q$  and $[Y_L,W,W]=  C_{Y_L}(W)= C_{Y_L}(Q)=Z$.
\end{claim}

\medskip

Applying \fref{lem:w1}, we consider $x \in D$ such that $Y_{L^x}\cap Q \not \le C_L(Y_L)$. Then $Y_{L^x}\cap Q$ is normalized by $Q$ and so
$$\ov {Y_{L^x} \cap Q}\text{ contains a 2-central involution in }\ov Q.$$ In particular,  \fref{clm:YLprops}(iii) gives $$|[Y_L,Y_{L^x} \cap Q]| \geq 2^2.$$  As $Y_L$ is elementary abelian, $\ov {Y_{L^x} \cap Q}$ is elementary abelian.

Suppose that $[Y_L, Y_{L^x}\cap Q, Y_{L^x} \cap Q] =1$. Then $$[Y_L,Y_{L^x}\cap Q]\le C_{S^x}([Y_{L^x},Q])= Q_{L^x}$$ by \fref{clm:YLprops} (iv).  Hence $[Y_L, Y_{L^x}\cap Q, Y_{L^x}]=1$.  Then as $|[Y_L,Y_{L^x}\cap Q]|=2^2$ and $|Y_L \cap Q|=2^3$, we conclude that $(Y_L \cap Q)C_{{L^x}}(Y_{{L^x}})/C_{{L^x}}(Y_{{L^x}})$ has order $2$.  Thus $[Y_{{L^x}}, Y_L\cap Q, Y_L \cap Q]=1$.  Now the argument just presented implies that $|\ov{Y_{L^x}\cap Q}| = 2$ and so,  as $Q$ normalizes $Y_{L^x}\cap Q$, $\ov{Y_{L^x}\cap Q} =Z(\ov Q)$.
In particular, as $[Y_L,S_0,S_0] =1$, we have proved that

 \begin{center}if $\ov{Y_{L^x}\cap Q} \le \ov {S_0}$, then $\ov{Y_{L^x}\cap Q}= Z(\ov Q)$. \end{center}

\medskip
 For a moment let $\ov{Q_1}$ be the fours subgroups of $\ov Q$ not equal to $\ov {S_0}$.
  Then  as $\Phi (Y_{L^x} \cap Q) = 1$ the displayed line implies that $\ov W \le \ov{Q_1}$ and  \fref{lem:not base}  and  $\ov{Q}^\prime \leq \ov{Y_{L^x} \cap Q}$  imply $\ov{W} = \ov {Q_1}$. The remaining statements in \fref{clm:W4} now follow from the action of $L$ on $Y_L$.\qedc

We have that $Z(Q)$ centralizes $[Y_L,Q] $ and so $Z(Q) \le S\cap C_L(Y_L)= Q_L$. Hence
using \fref{clm:W4} we obtain  \begin{eqnarray*}[W,W]&=&[\langle [Y_L,Q]^{D}\rangle ,W]= \langle [[Y_L,Q] ,W]^{D}\rangle\\&= &\langle Z^{D}\rangle= Z[Z,\langle V_L^{N_G(Q)}\rangle    ] \le Z[Z(Q),\langle V_L^{N_G(Q)}\rangle]\\& =&Z\langle [Z(Q),V_L]^{N_G(Q)}\rangle=Z.\end{eqnarray*}
\begin{claim}\label{clm:qlyl}
We have $Q_L= Y_L$.
\end{claim}

\medskip

Suppose that $Q_L > Y_L$. Let $m \in L$  be such that $\ov{K} \cong \SL_2(2)\times \SL_2(2)$, where $K =  \langle W,W^m\rangle$. Recall that by the choice of $L$ in the Notation at the end of  the introduction, we have $Y_L=\Omega_1(Z(Q_L))$ and by \fref{prop:unambiguous} and  \fref{clm:W4}, $K$ acts irreducibly on $Y_L=V_L$.  Hence  we may apply
 \fref{lem:[WO_p][W^mO_p]} (iii), (iv) and (v) which combined yield $U_L/Y_L$ is elementary abelian and  $$U_L'=Y_L.$$

 Since $[Q_L,W,W] \le [W,W] =Z \leq Y_L$, we have $W$ acts quadratically on every chief factor of $L$ in $Q_L/Y_L$.  In particular, no non-central $L$-chief factor of $Q_L/Y_L$ is isomorphic to $Y_L$.

Let $E$ be the preimage of $C_{U_L/Y_L}(K)$.  Then $E$ is normal in $L$ and application of \fref{lem: U>YL abelian}  implies that $E= Y_L$. Let $X \in \Syl_3(K)$. By \fref{lem:[WO_p][W^mO_p]}(i), $[K,C_{L}(Y_L)] \leq U_L$, so $XU_L$ is normal in $L$. As $L$ is solvable, $C_L(Y_L) = C_X(Y_L)Q_L$ and either $C_X(Y_L) = 1$ or $X \cong 3^{1+2}_+$. The latter case is impossible as $W$ is quadratic on $U_L/Y_L$. Hence $U_L=[U_L,O^2(L)]$  and $U_L/Y_L$ contains  no central $L$-chief factors. We know that every $L$-chief factor in $U_L/Y_L$ is a   wreath product module for $\SL_2(2) \wr 2$ with $\ov W$ acting quadratically.  In particular, for every non-central chief factor $F$ of $L$ in $U_L/Y_L$ we have $[F,\ov W ] = [F, Z(\ov{Q})]$.  Set $W_1= [W,D]$.  Then $$\ov {W_1} \geq  [\ov W, \ov Q]=  Z(\ov Q).$$  Hence  $[F,W]= [F,W_1]$  for every non-central chief factor $F$ of $L$ in $U_L/Y_L$. Set $\wt L = L/Y_L$ and let $z \in Q$ with $Z(\ov{Q}) = \langle \ov{z} \rangle$. As $C_F(Z(\ov{Q})) = [F,Z(\ov{Q})]$ for each $F$, we have $C_{\wt{U_L}}(z) = [\wt{U_L},z]$; then as $W$ acts quadratically on $\wt{U_L}$, we have
$[W,\wt{U_L}]= C_{\wt{U_L}}(W)$.  Thus  $[U_L,W]Y_L=[U_L,W_1]Y_L$.
In particular, $$[W/W_1,U_L]= [U_L,W]W_1/W_1= (Y_L \cap Q)[U_L,W_1]W_1/W_1=(Y_L\cap Q)W_1/W_1$$ and  so $U_L$ acts quadratically on $W/W_1$.  Therefore $U_LC_{D}(W/W_1)/C_{D}(W/W_1)$ is elementary abelian. Hence $$Y_L= U_L' \le C_{D}(W/W_1).$$  Set $R= \langle Y_L^D\rangle$. Then,  as $Y_L \not \le O_2(D)$ by \fref{lem:w1} (i), $Y_L \cap O_2(D)= Y_L \cap Q \le W$ and so $R$ centralizes $O_2(D)/W$ and $W/W_1$.
\fref{lem: cent W/[W,Q]} yields $Y_L \le O_2(D)$ and this  contradicts \fref{lem:w1} (i). We have shown $Q_L = Y_L$. \qedc

  \begin{claim}\label{clm:NGQ}  $|S| = 2^7$ and $N_G(Q)/Q \cong \Sym(3).$
\end{claim}

\medskip

Since $Q_L =Y_L = V_L$ and $\ov Q \cong \Dih(8)$, $|S|= 2^7$ and $|Q| = 2^6$. Then $N_G(Q) = SX$, where $X$ is a Hall $2^\prime$-subgroup of $N_G(Q)$ and $QX$ is normal in $N_G(Q)$. Furthermore $W$ is extraspecial of order $2^5$. As $W/Z = J(Q/Z)$, we have $W$ is normal in $N_G(Q)$. Hence $X$ acts faithfully on $W$ and embeds in $\OO^+_4(2)$. As $[\ov{W}, \ov{Q}] = Z(\ov{Q})$, $S/W$ is faithful on $W/Z$, so   $N_G(Q)/W$  embeds into $\OO_4^+(2)$. Because $\OO_4^+(2) \cong \Sym(3) \wr  2$, and $O_2(N_G(Q)/W) \not= 1$, we get the claim. \qedc

Taking $T \in \syl_3(L)$, we have $N_L(T)$ is a complement to $Q_L$ and so $L= Q_LN_L(T)$ is a split extension of $Q_L$ by $\mathrm {O}_4^+(2)$.  In particular, the isomorphism type of   $S$ is uniquely determined. As $\Sym(8)$ has a subgroup isomorphic to $L$ and $\Sym(8)$ has odd index in $\Alt(10)$, we have $S$ is isomorphic to  a Sylow $2$-subgroup of $\Alt(10)$.

 Let $z \in C_{Y_L}(Q)^\#$, then as $Y_L$ is a $+$-type space for $L$, there is a fours group $A$ of $Y_L$ which has all non-trivial elements $L$-conjugate to $z$. Since $C_G(z)$ has characteristic $2$,  $C_{O(G)}(z) =1$ and so   by coprime action $$O(G)=\langle C_{O(G)}(a)\mid a \in A^\#\rangle=1.$$

  Assume that $G$ has no subgroup of index two. Then $S$ is isomorphic to a Sylow $2$-subgroup of $\Alt(10)$. Therefore \cite[Theorem 3.15]{Mas} implies that  $F^\ast(G)\cong \Alt(10)$, $\Alt(11)$,  $\PSL_4(r)$, $r \equiv 3\pmod 4$, or $\PSU_4(r)$, $r \equiv 1 \pmod 4$.  Notice that $Z(Q)= C_{Y_L}(Q)=\langle z\rangle$ and so $C_G(z)= N_G(Q)$ has  characteristic $2$.   In $\Alt(11)$, $z$ corresponds to $(12)(34)(56)(78)$ and so $C_G(z) \leq (\Alt(8) \times Z_3):2$, which  implies that $C_G(z)$ is not of characteristic 2. In the linear and unitary groups $C_G(z)$ has a normal subgroup isomorphic to $\SL_2(r) \circ \SL_2(r)$, and this contradicts \fref{clm:NGQ}.  Hence $G \cong \Alt(10)$.

 Assume now that $G$ has a subgroup of index two. As   $V_L \leq G^\prime$ we also have $W \leq G^\prime$. Therefore $(G^\prime \cap L)/Y_L \cong \Omega^+_4(2)$ and so $G'$ has  Sylow 2-subgroups isomorphic to those of $\Alt(8)$.   Applying \cite[Corollary A*]{GoHa} we have $F^\ast(G)  \cong \Alt(8)$, $\Alt(9)$ or $\PSp_4(3)$.   Again in $G' \cong \PSp_4(3)$, we have that $G'$ contains a subgroup of shape $\SL_2(3) \circ \SL_2(3)$. This contradicts \fref{clm:NGQ} and proves the proposition.
\end{proof}

\section{$\ov {L^\circ} \cong \GammaSL_2(4)$}

In this section we attend to the case from \fref{prop:unambiguous}(ii). Hence we have $p=2$, $\ov {L^\circ} \cong \Gamma \SL_2(4)$,  $V_L$ is the natural $\SL_2(4)$-module and either $Y_L=V_L$ or $|Y_L/V_L|=2$.  Notice that as $L \not \le N_G(Q)$ and $L$ centralizes $Y_L/V_L$, if $Y_L> V_L$, $Y_L$ does not split over $V_L$ and $C_{Y_L}(Q) =C_{V_L}(Q)$ has order $2$.  Furthermore, $C_S([Y_L,Q])= Q_L$.

Our aim is to prove

\begin{proposition}\label{prop:S5-1}  Suppose  $L \in \mathcal{L}_G(S)$ and $L \not \le N_G(Q)$ with  $\ov L$ in the unambiguous wreath product case.  If $Y_L \not \le Q$ and $\ov {L^\circ} \cong \GammaSL_2(4)$, then $G \cong \Mat(22)$ or $\Aut(\Mat(22))$.
\end{proposition}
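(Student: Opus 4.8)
The plan is to mirror the proof of \fref{prop:O4}, proceeding in four stages: fix the action of $L$ and of $W$ on $Y_L$; show $Q_L=Y_L$; compute $|S|$ and the structure of $N_G(Q)$; and recognise $G$ from the centralizer of a $2$-central involution. Since $\ov{L^\circ}\cong\GammaSL_2(4)\cong\Sym(5)$ acts faithfully and absolutely irreducibly on $V_L$, the natural $\SL_2(4)$-module --- which is the natural $\Gamma\L_2(4)$-module of \fref{lem:Sym5-modules}(i) --- and $\Out(\Sym(5))=1$, we get $\ov L=\ov{L^\circ}$, $\ov S\cong\Dih(8)$, and $\ov{S_0}=\ov S\cap\SL_2(4)$ is the Klein four-group of $\Alt(5)$ inside $\ov S$.

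First I would record the module facts analogous to those at the start of the proof of \fref{prop:O4}: the orders of $[Y_L,Q]$, $[Y_L,Q,Q]$ and $[Y_L,Q']$, the facts $C_{Y_L}(Q)=C_{V_L}(Q)=Z$ of order $2$ and $C_S([Y_L,Q])=Q_L$ (the latter two already isolated at the start of this section). Then, using \fref{lem:w1} and \fref{lem:not base}, I choose $x\in D$ with $V_{L^x}\cap Q\not\le C_L(Y_L)$, note that $\ov{V_{L^x}\cap Q}$ contains a $2$-central involution of $\ov Q$, and determine $\ov W$ as in \fref{prop:O4}. The crucial input is that $\ov W\not\le\ov{S_0}$ by \fref{lem:not base}, and since $\ov{S_0}$ is the Sylow $2$-subgroup of the $\Alt(5)$-part, $\ov W$ is a fours group of $\ov{L^\circ}$ meeting $\Alt(5)$ in $Z(\ov S)$; by \fref{lem:Sym5-modules}(ii)--(iii) such a fours group is \emph{not} quadratic on $V_L$, so $[V_L,W,W]\ne1$. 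A commutator computation as in the corresponding step of \fref{prop:O4} then gives $[W,W]=Z\le V_L$, $[Y_L,W]=Y_L\cap Q$ and $C_{Y_L}(W)=Z$.

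Next, to prove $Q_L=Y_L$ I follow the argument of \fref{prop:O4}: pick $m\in L$ with $K=\langle W,W^m\rangle$ mapping onto $\ov{L^\circ}$, so that $O^2(L)\le K$, $V_L$ is irreducible as a $K$-module, and the hypotheses $[V_L,W,W]\ne1$, $[W,W]\le V_L$ of \fref{lem:[WO_p][W^mO_p]}(iii)--(v) hold; this yields $U_L/Y_L$ elementary abelian, $U_L'=Y_L$, and no non-central $L$-chief factor of $Q_L/Y_L$ isomorphic to $V_L$. Here $\ov L\cong\Sym(5)$ is not solvable, so the $\Syl_3(K)$ step used in \fref{prop:O4} must be replaced: instead I would apply \fref{lem: U>YL abelian} to the preimage of $C_{U_L/Y_L}(K)$ together with \fref{lem:isochieffactors} to rule out central $L$-chief factors of $Q_L/Y_L$, and then on each non-central chief factor $F$ compare the $W$- and $W_1$-commutators (with $W_1=[W,D]$ and $Z(\ov Q)\le\ov{W_1}$) to obtain $[F,W]=[F,W_1]$. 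The rest of the argument then runs unchanged: $U_L$ acts quadratically on $W/W_1$, so $Y_L=U_L'\le C_D(W/W_1)$, and \fref{lem: cent W/[W,Q]} forces $Y_L\le O_2(D)$, contradicting \fref{lem:w1}(i). I expect this adaptation --- in particular controlling chief factors of $Q_L/Y_L$ isomorphic to the $\OO_4^-(2)$-module, on which $\ov W$ \emph{is} quadratic, together with the extra bookkeeping when $Y_L>V_L$ is a non-split extension --- to be the main obstacle.

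With $Q_L=Y_L$ we have $C_L(Y_L)=Y_L$, so $\ov L\cong\GammaSL_2(4)$, $\ov S\cong\Dih(8)$ and $|S|=2^3|Y_L|\in\{2^7,2^8\}$ according as $Y_L=V_L$ or $|Y_L/V_L|=2$. As in the computation of $N_G(Q)$ in \fref{prop:O4} I then pin down $N_G(Q)$: $Q$ has class $3$, $Z(Q)=Z=C_{Y_L}(Q)=\langle z\rangle$ has order $2$, $W$ is a characteristic subgroup of $Q$ on which $N_G(Q)$ acts with small quotient, so $N_G(Q)$ is determined up to isomorphism. Since $C_G(z)=C_G(Z(Q))=N_G(Q)$ is of characteristic $2$ and $\SL_2(4)$ is transitive on $V_L^\#$, the argument of \fref{prop:O4} produces a fours group in $Y_L$ all of whose involutions are $L^\circ$-conjugate to $z$, so coprime action gives $O(G)=1$. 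Finally, to recognise $G$: if $G$ has no subgroup of index $2$, identify $S$ and $C_G(z)$ with those of $\Mat(22)$ and apply the corresponding classification (of Gorenstein--Harada type), using the structure of $N_G(Q)$ --- too small to contain any $\SL_2(r)\circ\SL_2(r)$ section --- to eliminate the other candidates (such as $\Mat(23)$ and the relevant groups of Lie type), giving $G\cong\Mat(22)$; if $G$ does have a subgroup of index $2$, the analogue of the step using \cite[Corollary A*]{GoHa} forces $F^\ast(G)\cong\Mat(22)$ and hence $G\cong\Aut(\Mat(22))$. Combining the two cases yields $G\cong\Mat(22)$ or $\Aut(\Mat(22))$.
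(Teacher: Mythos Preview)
Your four-stage outline is sensible, and your Stage~1 (pinning down $\ov W$, $[W,W]=Z$, $[V_L,W,W]\ne1$) is essentially what the paper does. But Stages~2 and~4 diverge from the paper in ways that matter.

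For $Q_L=Y_L$, the paper does \emph{not} transplant the $U_L$-machinery from \fref{prop:O4}. Your adaptation would need, among other things, $U_L'=Y_L$ rather than merely $U_L'\ge V_L$ when $|Y_L:V_L|=2$, and a replacement for the solvability step you flag; neither is obviously available. The paper instead argues directly: since $W$ is quadratic on $Q_L/Y_L$, every non-central $L$-chief factor there is the $\OO_4^-(2)$-module (\fref{lem:Sym5-modules}); choosing a minimal $E\trianglelefteq L$ with $E/Y_L$ non-central, one shows $E'\le V_L$ and then derives a contradiction either from \fref{lem:dualchieffactors} and Prince's result~\cite{Prince} on the tensor product $(E/V_L)\otimes V_L$, or (when $[E,Q_L]=Y_L>V_L$) from $[W,Q,Q]=Z$ and the structure of the $\OO_4^-(2)$-module.

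The real gap is the recognition step. There is no off-the-shelf ``Gorenstein--Harada type'' classification for the Sylow $2$-subgroup of $\Mat(22)$, and knowing $C_G(z)$ alone is not enough. The paper first shows $N_G(Q)/Q\cong\Sym(3)$, then produces a \emph{third} $2$-local: an elementary abelian $E\le S$ of order $2^4$ (namely $E=C_S([W,Q])$) with $N_G(E)/E\cong\Alt(6)$ and $N_L(E)$ of index~$5$ in $L$. This is used to prove that $G$ has a single class of involutions and, via \fref{lem:splitA6}, that $C_G(z)$ is a \emph{split} extension $2^4{:}\Sym(4)$; one then shows $O(G)=1$, $F^*(G)$ simple, and applies Janko's characterization~\cite{Janko} of the Mathieu groups. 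The construction of $E$ and of $N_G(E)$ is the heart of the identification and is entirely missing from your plan. Similarly, in the $Y_L>V_L$ case the paper does not invoke \cite{GoHa}; it shows $J(S)=Y_L$, uses \fref{lem:fusioncontrol} to control fusion in $Y_L$, and applies Thompson transfer to produce an index-$2$ subgroup $G_0$ in which $(G_0,L_0)$ satisfies the $Y_L=V_L$ hypotheses, reducing to the case already handled.
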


Notice that as $Q_L \in \syl_2(C_L(Y_L))$, $C_L(Y_L)/Q_L$ is centralized by $L^\circ$, and so $C_{L^\circ}(Y_L)=Q_L \cap L^\circ$ as the Schur multiplier of $\SL_2(4)$ has  order $2$. We also have   $|\ov Q|\ge 4$ and $|Z(Q) \cap V_L|=2$.

\begin{lemma}\label{lem:fusioncontrol} For $N = N_G(Q_L)$ we have $(Z(Q) \cap V_L)^N \cap Y_L \subseteq V_L$.  In particular, $N$ normalizes $V_L$.
\end{lemma}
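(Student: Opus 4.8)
The plan is to show that the only conjugates of $z_0 := $ a generator of $Z(Q)\cap V_L$ under $N = N_G(Q_L)$ which land inside $Y_L$ are already in $V_L$; once that is known, $N$ permutes the set $(z_0^N\cap Y_L)$, whose span is a normal subgroup of $N$ contained in $V_L$ and containing $z_0$, and since $V_L$ is generated by the $L^\circ$-conjugates of $z_0$ (it is an irreducible $\SL_2(4)$-module with $z_0$ a nonzero vector) and $L^\circ$-conjugacy is a fortiori $N$-conjugacy after intersecting with the appropriate subgroup, one deduces $\langle z_0^N \cap Y_L\rangle = V_L$, so $V_L \trianglelefteq N$.

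First I would pin down the local structure at $z_0$. Since $Q$ is large and $z_0 \in C_{V_L}(Q) = Z(Q)\cap V_L$, we have $C_G(z_0) \le N_G(\langle z_0\rangle) \le N_G(Q)$ by the defining property of a large subgroup (taking $U = \langle z_0\rangle \le C_G(Q)$). In particular $C_G(z_0) = N_G(Q)$ has characteristic $2$, and $z_0$ is a $2$-central involution whose centralizer controls a lot of fusion. Next I would use the hypothesis $Y_L \not\le Q$, equivalently $V_L\not\le Q$, to locate $Q_L$: we have $C_S([Y_L,Q]) = Q_L$ as recorded just before the statement, and $Q_L = Q\cap C_L(Y_L)$; the point is that $Q_L$ is big enough that $N_G(Q_L)$ still ``sees'' $Q$. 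Concretely I expect to show that $Q_L \le Q$ (indeed $Q_L \le Q$ since $Q_L$ is the Sylow $2$-subgroup of $C_L(Y_L)$ and $V_L$ is self-centralizing modulo $C_L(Y_L)$, while $Q$ is weakly closed), and that $Q = O_2(N_G(Q_L))$ or at least that $N_G(Q_L)\le N_G(Q)$-ish control holds on the relevant piece; more carefully, one shows any $N$-conjugate of $z_0$ inside $Y_L$ is $2$-central in $S$ (as $z_0$ is, being in $Z(Q)$ with $Q$ weakly closed and $|Z(Q)\cap V_L| = 2$), hence lies in $Z(Q)$, hence in $Z(Q)\cap Y_L$.

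The crucial computation is therefore to identify $Z(Q)\cap Y_L$ and show it lies in $V_L$. Here I would split on $Y_L = V_L$ versus $|Y_L:V_L| = 2$. If $Y_L = V_L$ there is nothing to do beyond the fusion argument. If $|Y_L : V_L| = 2$, then since $Y_L$ does not split over $V_L$ and $C_{Y_L}(Q) = C_{V_L}(Q) = \langle z_0\rangle$ has order $2$ (both recorded before the statement), every element of $Y_L\setminus V_L$ is moved by $Q$, so no such element is $2$-central, and in particular $Z(Q)\cap Y_L \le C_{Y_L}(Q) = \langle z_0\rangle \le V_L$. Combining: any $g\in N$ with $z_0^g \in Y_L$ gives $z_0^g$ a $2$-central involution of $S$ (since $z_0$ is, and $g$ normalizes $Q_L$ hence $z_0^g \in Z(Q^g)$... here I need $Q^g$ related to $Q$) lying in $Y_L$, forcing $z_0^g \in Z(Q)\cap Y_L = \langle z_0\rangle$, i.e.\ $z_0^g = z_0$. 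This yields $(z_0^N\cap Y_L) = \{z_0\}$, which trivially gives $\langle z_0^N\cap Y_L\rangle = \langle z_0\rangle \le V_L$; then $V_L \trianglelefteq N$ follows because $N$ normalizes $Q_L$, hence normalizes $Y_L = \Omega_1(Z(Q_L))$, and the hyperplane fixed by the action on $Y_L/\langle z_0\rangle$... actually more directly: $N$ normalizes $Q_L$ and $Y_L$, and $V_L = [Y_L, O^2(L\cap N))]$-type description is not obviously $N$-invariant, so the genuinely necessary output of the fusion statement is that $z_0^N$ is used to cut out $V_L$ inside $Y_L$ via a further argument in the next lemma.

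The main obstacle I anticipate is the bookkeeping in the case $|Y_L:V_L| = 2$: making rigorous that an $N$-conjugate of $z_0$ landing in $Y_L$ must be $2$-central, which requires knowing that $g$ conjugates the relevant ``$Q$ at $z_0$'' to the ``$Q$ at $z_0^g$'' compatibly — this is where one uses $Q_L \trianglelefteq N$ together with $Q = O_2(N_G(Q_L))$ or the weak closure of $Q$ in $S$, plus $|Z(Q)\cap V_L| = 2$ and $C_{Y_L}(Q) = C_{V_L}(Q)$. The secondary obstacle is deducing $N$ normalizes $V_L$: I'd argue that $V_L$ is the unique $N$-invariant (equivalently $Q_L$-invariant, using that $L\cap N$ acts) subgroup of $Y_L$ of its order not containing any non-$2$-central involution, or simply that $Y_L/V_L$ has order at most $2$ and $N$ acts on $Y_L$ fixing the set of $2$-central involutions, whose span (together with the structure of $Y_L$ as stated) is exactly $V_L$. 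A clean way: $V_L = \langle v\in Y_L : v$ is a square in... $\rangle$ — no; better, $V_L$ is characterized inside $Y_L$ as $\langle z_0^{L^\circ}\rangle$ and one shows $z_0^{L^\circ} = z_0^{O^2(C_L(z_0))}$ lies in a single $N$-class, so $V_L = \langle z_0^{L^\circ}\rangle \le \langle z_0^N\cap Y_L\rangle \le V_L$ by the first part, giving equality and $N$-invariance. That last chain is what I'd write out carefully.
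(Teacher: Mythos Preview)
Your central claim is false, and the approach cannot be repaired. You assert that for $g\in N$ with $z_0^g\in Y_L$ one obtains $z_0^g\in Z(Q)$ (equivalently $z_0^g=z_0$, since $Z(Q)\cap Y_L=\langle z_0\rangle$). But $L\le N$ (as $Q_L\trianglelefteq L$) and $\ov{L^\circ}\cong\Gamma\SL_2(4)$ acts transitively on $V_L^\#$, so there exist $g\in L\le N$ with $z_0^g\in V_L^\#\setminus\{z_0\}\subseteq Y_L$; such $z_0^g$ is \emph{not} in $Z(Q)$, since $C_{Y_L}(Q)=\langle z_0\rangle$. In fact $z_0^N\subseteq Y_L$ (because $N$ normalizes $Y_L=\Omega_1(Z(Q_L))$) and $z_0^N\supseteq V_L^\#$, so your intermediate conclusion $z_0^N\cap Y_L=\{z_0\}$ is wrong on its face. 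Your proposed fix ``$Q=O_2(N_G(Q_L))$'' would force $N\le N_G(Q)$ and hence $L\le N_G(Q)$, contradicting the standing hypothesis; weak closure of $Q$ in $S$ is of no help either, since $g\in N$ need not normalize $S$.

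The paper's argument is entirely different and does not use $2$-centrality at all: it is an orbit-counting argument. The $\ov{L^\circ}$-orbits on $Y_L^\#$ have sizes $15$ (namely $V_L^\#$), $6$, and $10$, with point stabilizers on $Y_L\setminus V_L$ isomorphic to $5{:}4$ and $2\times\Sym(3)$ respectively. One shows that $C_N(z_0)$ normalizes $Q\cap Y_L$ (via largeness: $C_N(z_0)\le N_G(Q)$, and $C_N(z_0)\le N$ normalizes $Y_L$), hence is not irreducible on $Y_L/\langle z_0\rangle$; conjugating, $C_N(U)$ is not irreducible on $Y_L/U$ for any $U=\langle z_0\rangle^g$. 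But in the $5{:}4$ case the Sylow $5$-subgroup is already irreducible on $Y_L/U\cong V_L$, so the size-$6$ orbit is excluded from $z_0^N$. If $z_0^N\not\subseteq V_L$ then $|z_0^N|=15+10=25$, which divides $|N/C_N(Y_L)|\le|\GL_5(2)|$, impossible. Your deduction of ``$N$ normalizes $V_L$'' from the first assertion, via $V_L=\langle z_0^{L^\circ}\rangle\le\langle z_0^N\cap Y_L\rangle\le V_L$ and $N$-invariance of $z_0^N\cap Y_L$, is correct and is presumably what the paper intends by ``In particular''.
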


\begin{proof}  If $V_L = Y_L$, there is nothing to prove. Assume that $|Y_L : V_L| = 2$. Choose $g \in N$, put $U=(Z(Q) \cap V_L)^g$ and  assume that $U\not\leq V_L$. Recall that $Y_L = \Omega_1(Z(Q_L))$ and so $U\leq Y_L$  and $Y_L$ is normalized by $N$.   Then  $C_{L}(U)C_N(Y_L)/C_N(Y_L)  \cong 5:4$ or $2 \times \Sym(3)$.   As $C_N(U^{g^{-1}})$ normalizes $Q\cap Y_L$,  $C_N(U^{g^{-1}})$ is not irreducible on $Y_L/U^{g^{-1}}$.  This excludes the possibility $C_L(U)C_N(Y_L)/C_N(Y_L)\cong 5:4$ which is irreducible on $Y_L/U$. Hence we see that $Z(Q) \cap V_L$ has exactly $15 + 10 = 25$ conjugates under $N$, but 25 does not divide the order of $\SL_5(2)=\Aut(Y_L)$.   This contradiction proves the lemma.
\end{proof}

\begin{lemma}\label{lem:S5-2} We have  $Q_L= Y_L$ and either
\begin{enumerate}
\item $|S|= 2^7$,  $L/Q_L\cong\Gamma \SL_2(4)$, $N_G(Q)/Q \cong \SL_2(2)$, there exists a subgroup $E \le S$ of order $2^4$ which is normalized by $N_G(Q)$ such that $N_G(E)/E \cong \Alt(6)$ and $N_L(E)$ has index $5$ in $L$. Furthermore all the involutions in $\langle N_G(E), L\rangle$ are conjugate.
\item $G$ has a subgroup of index $2$ which satisfies the conditions in \rm (i).
\end{enumerate}
\end{lemma}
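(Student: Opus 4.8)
\textbf{Plan of proof for Lemma~\ref{lem:S5-2}.}
The strategy is to follow the same template used in the proof of \fref{prop:O4}: first pin down $Q_L$, then determine $|S|$ and the structure of $N_G(Q)$, and finally locate a second maximal $2$-local overgroup of $S$ and identify $G$ from a fusion/recognition theorem. I would begin by proving $Q_L = Y_L$. Suppose not; then $Q_L > Y_L = \Omega_1(Z(Q_L))$, and (as in \fref{clm:qlyl}) I would pick $m \in L$ so that $K = \langle W, W^m\rangle$ maps onto $O^2(\ov{L^\circ}) \cong \SL_2(4)$ acting irreducibly on $V_L$. The key input is that $[V_L, W, W] \ne 1$: since $\ov W$ contains the $2$-central involution of $\ov Q$ and $\ov{L^\circ} \cong \Gamma\SL_2(4)$, one checks using \fref{lem:Sym5-modules} that $W$ does \emph{not} act quadratically on $V_L$ (the non-quadratic fours group of $\Sym(5)$ sits inside $\Alt(5)$ only for the $\Gamma\L_2(4)$-module, which is exactly our case modulo the $\Sym(5)\cong\Sp_4(2)'.2$ identification). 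Then \fref{lem:[WO_p][W^mO_p]}(iii)--(v) give $U_L/Y_L$ elementary abelian with $U_L' = Y_L$, and \fref{lem: U>YL abelian} together with the observation that no non-central $L$-chief factor of $Q_L/Y_L$ is $\cong V_L$ (again because $W$ is quadratic on $Q_L/V_L$ while not quadratic on $V_L$) forces, via a $\langle W, D\rangle$ argument and \fref{lem: cent W/[W,Q]}, that $Y_L \le O_2(D)$, contradicting \fref{lem:w1}(i). Most of this is the verbatim analogue of \fref{clm:qlyl}, so the only genuinely new point is the non-quadraticity check, which should be the first thing to record cleanly.

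Once $Q_L = Y_L$, the order of $S$ and the shape of $N_G(Q)$ drop out. With $Q_L = Y_L$ of order $2^4$ or $2^5$ and $\ov Q$ of order $4$ or $8$, I would compute $|Q|$ and then $|S|$; the structure of $V_L$ as the natural $\SL_2(4)$-module forces $W$ (the image of $\langle (V_L\cap Q)^D\rangle$) to be a specific $2$-group, and $J(Q/Z)$-type arguments as in \fref{clm:NGQ} show $W$ is normal in $N_G(Q)$, whence $N_G(Q)/Q$ embeds in $\Aut(W)$ with a nontrivial $O_2$, giving $N_G(Q)/Q \cong \SL_2(2) \cong \Sym(3)$ (the wreath structure is gone since $|\mathcal K| = 1$, so there is no $\OO_4^+(2)$ here). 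This pins $|S| = 2^7$ after checking the $|Y_L:V_L| = 2$ case does not enlarge things beyond what is listed --- in fact I expect the split case $Y_L = V_L$ to be eliminated here or shortly after, leaving $|Y_L| = 2^5$, $|Q| = 2^6$, $|S| = 2^7$. Throughout, \fref{lem:fusioncontrol} guarantees that $V_L$ is normalized by $N_G(Q_L)$, which is what lets the fusion bookkeeping close up.

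The heart of the lemma is producing the subgroup $E$ with $N_G(E)/E \cong \Alt(6)$. Here I would exploit that $S$ is a Sylow $2$-subgroup of $\Mat(22)$ (or $\Aut(\Mat(22))$): inside such an $S$ there is a distinguished elementary abelian $E$ of order $2^4$ --- the hexacode/natural module for the $\Alt(6) \cong \Sp_4(2)'$ inside $\Mat(22)$ --- and one identifies it abstractly as, say, $J(Q_L W)$ or a suitable $\Omega_1(Z(\cdot))$ of a characteristic subgroup, using that $E$ must be normalized by $N_G(Q)$. The amalgam $\langle N_G(E), L\rangle$ of parabolic-type subgroups then has to be recognized; I would invoke the $C_G(\text{involution})$ structure ($N_G(Q) = C_G(z)$ for $z$ generating $Z(Q)\cap V_L$, of characteristic $2$) and a Sylow-$2$ or weak-closure recognition theorem for $\Mat(22)$ to get $F^*(G) \cong \Mat(22)$, with the index-$2$ case accounting for $\Aut(\Mat(22))$. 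The statement ``all involutions in $\langle N_G(E), L\rangle$ are conjugate'' I would get from the transitivity of $\Alt(6)$ and $\Gamma\SL_2(4)$ on the nonzero vectors of their natural modules, glued along the common $S$, exactly as the fours-group argument at the end of \fref{prop:O4} produced $O(G) = 1$.

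\textbf{Main obstacle.} The hard part will be the middle-to-end transition: verifying that the abstract configuration $(Q_L = Y_L,\ |S| = 2^7,\ N_G(Q)/Q \cong \Sym(3))$ actually forces the existence of the $\Alt(6)$-parabolic $N_G(E)$ --- i.e.\ that $S$ lies in a second maximal $2$-local of this specific shape and not merely in $N_G(Q)$ and $L$. This is where the ``second amalgam vertex'' has to be manufactured rather than assumed, presumably by a pushing-up or failure-of-factorization argument applied to some overgroup of $S$, and it is the step most likely to require its own sublemmas. The quadraticity computation in step one and the elimination of the split case $Y_L = V_L$ are comparatively routine module theory, and the final identification of $G$ is a black-box appeal to a known recognition theorem for $\Mat(22)$.
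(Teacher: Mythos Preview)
Your outline has the two cases of the lemma inverted, and this leads to wrong arithmetic and a missing argument. Case~(i), with $|S|=2^7$, is precisely the case $Y_L=V_L$: then $|Q_L|=|Y_L|=2^4$ and $|\ov S|=2^3$. Case~(ii) is the case $|Y_L:V_L|=2$; there one shows $\ov Q=\ov S$ of order $8$, so $|S|=2^8$ (not $2^7$), and the content of~(ii) is to produce an index-$2$ subgroup $G_0\le G$ in which the configuration satisfies~(i). The paper obtains $G_0$ by proving $J(S)=Y_L$ (so $N_G(Y_L)$ controls fusion in $Y_L$), using \fref{lem:fusioncontrol} to see $V_L\trianglelefteq N_G(Y_L)$, and then applying Thompson transfer to an involution in $Y_L\setminus V_L$. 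Your plan instead expects $Y_L=V_L$ to be ``eliminated'', which is the opposite of what happens, and never addresses the transfer step at all.

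For $Q_L=Y_L$ the paper does not rerun the $U_L'/W_1$-argument of \fref{clm:qlyl}. It observes via \fref{lem:Sym5-modules} that the non-central $L$-chief factors in $Q_L/Y_L$ are orthogonal $\OO_4^-(2)$-modules, picks a minimal $E\le Q_L$ with $E/Y_L$ non-central, shows $E'\le V_L$, and then contradicts either \fref{lem:dualchieffactors} combined with \cite[Lemma~2.2]{Prince} (when $[Q_L,E]=V_L$) or the identity $[W,Q,Q]=Z$ against the orthogonal module structure (when $[E,Q_L]=Y_L>V_L$). Your route might also close, but note that when $Y_L>V_L$ part~(v) of \fref{lem:[WO_p][W^mO_p]} only yields $V_L\le U_L'$, not $U_L'=Y_L$, so the step ``$Y_L=U_L'\le C_D(W/W_1)$'' would need a separate argument.

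Finally, the subgroup $E$ with $N_G(E)/E\cong\Alt(6)$ is not produced by pushing-up or failure of factorization. In the $Y_L=V_L$ case the paper sets $E=C_S([W,Q])$, checks $|E|=2^4$ and $E\trianglelefteq N_G(Q)$, and shows that the index-$5$ subgroup $L_1\le L$ with $O_2(L_1)=S_0$ also normalizes $E$ (because $E$ and $Y_L$ are the only elementary abelian subgroups of order $2^4$ in $S_0$). Transitivity of $\langle L_1,N_G(Q)\rangle$ on $E^\#$ then gives $|N_G(E)|=15|N_G(Q)|$, and a short count inside $\GL_4(2)\cong\Alt(8)$ forces $N_G(E)/E\cong\Alt(6)$. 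The identification $G\cong\Mat(22)$ via Janko's theorem belongs to the proof of \fref{prop:S5-1}, not to this lemma.
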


\begin{proof}   We have $\ov {S} \cong \Dih(8)$ and $\ov Q \not \le \ov S_0$ as $\ov {L^\circ} \cong \GammaSL_2(4)$.  \fref{lem:not base} implies that $\ov W \not \le \ov{S_0}$. By assumption,  we either have $Y_L = V_L$ or $|Y_L:V_L|=2$. In particular, $2^4 \le |Y_L| \le 2^5$.  Since $\ov Q$ is normal in   $\ov S$ and contains $\ov W$ we know

\begin{claim} Either
$\ov{Q}$ is elementary abelian of order $4$ or $\ov Q= \ov S$
\end{claim}

\medskip

 As $V_L$ is a natural $\SL_2(4)$-module and $L \not \le N_G(Q)$,  we have $C_{Y_L}( Q)= C_{Y_L}(S)$ has order $2$ and $[Y_L,Q]=[V_L,Q]$ has order $8$. Furthermore, as $W$ is normal in $S$ and is not contained in $S_0$, we have   $[Y_L,Q,W]= Z$ where $Z= C_{V_L}(S)$ has order $2$.  Thus, arguing exactly as before \fref{clm:qlyl} and in the proof of \fref{clm:W4} we obtain

\begin{claim}\label{clm:S5-W quad} $|\ov W|=4$, $[W,W]= Z$ and $[Q_L,W,W]\le Y_L$.
\end{claim}

\medskip

\begin{claim}\label{clm:S5-YMnotQM}
Assume that $Q_L> Y_L$. Then $[Q_L,O^2(L)]\not\le Y_L$.
\end{claim}

\medskip

Suppose that  $[Q_L,O^2(L)] \le Y_L$. Then $V_L \not\le \Phi(Q_L)$ by Burnside's Lemma \cite[Proposition 11.1]{GoLyS2}, which contradicts \fref{lem:VL sub QL'}(iii). This   proves the claim\qedc

\medskip

\begin{claim}\label{clm:S5-V_LnotY_L} If $V_L < Y_L$, then $\ov Q = \ov S$.
\end{claim}

\medskip

If $\ov Q$ has order $4$, then $\ov Q = \ov W$ by \fref{clm:S5-W quad}, so $\ov Q$ normalizes a Sylow $3$-subgroup $\ov T$ of $\ov L$ and so $Q$ normalizes $C_{Y_L}(T)$ which has order $2$ and complements $V_L$. Hence $C_{Y_L}(T) \leq Z(Q)$, so $T \le N_G(Q)$ and therefore $L=\langle T ,S\rangle \le N_G(Q)$, a contradiction.  Thus $\ov Q= \ov S$ has order $8$.\qedc

\medskip
\begin{claim}\label{clm:Q_L=Y_L} We have $Q_L = Y_L$.
\end{claim}

\medskip
Suppose false.   By \fref{clm:S5-W quad} $W$ acts quadratically  on $Q_L/Y_L$  and $|\ov W|=4$.  Also $\ov W \not\leq \ov{S_0}$, so \fref{lem:Sym5-modules}  implies that the non-central $ L $-chief factors in  $Q_L/Y_L$ are orthogonal modules for $\ov{L} \cong \OO_4^-(2)$. In particular, as $ L$-modules, the non-central $L$-chief factors of $Q_L/Y_L$ are not isomorphic to $V_L$.

  Choose $E \le Q_L$ normal in $L$ and minimal so that $E/Y_L$ contains a non-central $L$-chief factor and let $F$ be the preimage of $C_{E/Y_L}(O^2(L))$.  Then $[F,O^2(L)] \le Y_L$ and \fref{lem: U>YL abelian} applies to yield $F \le Y_L$. In particular, $[E,E]\le Y_L$.

  We claim $E'\le V_L$.  This is obviously the case if $V_L= Y_L$. So suppose that $|Y_L:V_L|=2$. If $E'\not\le V_L$. Then the minimal choice of $E$ and $E'V_L= Y_L$ implies that $E/V_L $ is extraspecial of order $2^5$.  Notice that $[E,W] \le W$ and $W/Z$ is elementary abelian as $[W,W] = Z$ by \fref{clm:S5-W quad}. Hence, as $[E,W]Y_L/V_L$ has order $2^3$, we infer that $E/V_L$ has $+$-type contrary to $\ov {L} \cong \Gamma \SL_2(4)$.   Hence $E/V_L$ is elementary abelian. If $[Q_L,E]= V_L$, then $E/V_L$ has order $2^4$ by \fref{lem:Sym5-modules}  and so $Q_L/C_{Q_L}(E)$ embeds into $$\Hom_L(E/V_L,V_L)\cong (E/V_L)^*\otimes V_L \cong (E/V_L)\otimes V_L$$ by \fref{lem:dualchieffactors}. Since $Q_L/C_{Q_L}(E)$ involves only trivial and orthogonal modules this contradicts \cite[Lemma 2.2]{Prince}.

  Thus $[E,Q_L]= Y_L>V_L$.

   By \fref{clm:S5-V_LnotY_L}
\begin{center} $\ov Q= \ov S$ has order $8$.\end{center}
In summary we now know  $|\ov W| =4$ and $\ov {[W,Q]} =\ov{[W,S]} = Z(\ov S)$.

We calculate using $Z$ is normal in $D$ by \fref{clm:S5-W quad} that $$[W,Q,Q]= \langle[V_L,Q,Q,Q]^{D} \rangle = \langle Z ^{D} \rangle =Z.$$
Therefore $$[E,[W,Q],Q] \le E\cap [[W,Q],Q] \le Z \le Y_L.$$
 As $|[Z(\ov{S}),E/Y_L]| = 4$ and $\ov Q = \ov S$, this implies that $|C_{E/Y_L}(\ov S)| =4$.
As $E/Y_L$ is the orthogonal $\OO_4^-(2)$-module for $L$,   this is impossible.  We have  proved the claim.
\qedc

\begin{claim}\label{clm:structure}
Suppose that $Y_L= V_L$.  Then $L$ is a maximal 2-local subgroup of $G$, $N_G(Q)/Q \cong \SL_2(2)$, there exists a subgroup $E \le S$ of order $2^4$ which is normalized by $N_G(Q)$ such that $N_G(E)/E \cong \Alt(6)$ and $N_L(E)$ has index $5$ in $L$.
\end{claim}

\medskip

By \fref{clm:Q_L=Y_L} we have $|S|= 2^7$, and $|\ov W|= 2^2$. Also $|[W,Y_L]| = 8$ and $Y_L \not\le Q$, so $Q \cap Y_L = [W,Y_L] = W \cap Y_L$, Therefore  $| W|=2^5$. Set $C= C_Q(W)$. Then $C$ centralizes $[Y_L,Q]$ which has order $2^3$ and so $C\le C_{L}([Y_L,Q])= Y_L$. Thus $C\le C_{Y_L}(W)$ which has order $2$.   Then, by \fref{clm:S5-W quad},  $W'= Z=C$ and, as $W$ is generated by involutions, we have $W$ is extraspecial.  Since $[Y_L,Q]\le W$, $W$ has $+$-type.

Observe $W/Z = J(Q/Z)$, so $W$ is normal in $N_G(Q)$ and  $N_G(Q)/Z$ embeds into $\Aut(W)\cong 2^4{:}\OO_4^+(2)$.

 Assume that $Y_L Q/Q$ normalizes a subgroup  $T$ of $O_3(N_G(Q))/Q$ which has fixed points on $W/Z$.  Then $W= [W,T]C_W(T)$ and $[W,T]\cong C_W(T) \cong \Q_8$ and these subgroups are normalized by $Y_L$. But then $$[W,Y_L] = [C_W(T),Y_L][W,T,Y_L].$$ Since $[W,Y_L]$ is elementary abelian and $\Omega_1(P) = Z(P)$ for $P \cong \Q_8$, we conclude that    $$[C_W(T),Y_L]=[W,T,Y_L]= Z$$ and then $[W,Y_L]$ has order $2$ which is nonsense as $Y_L$ is the natural module. Therefore $Y_L$ normalizes no such subgroup.

Let $F= O_{2,3}(N_G(Q))$. Assume that $|F/Q|= 9$.  Then the previous argument implies that $C_{F/Q}(Y_L) \not= 1$. Let $T_1$ be the preimage of this subgroup. Then $[Y_L,Q]$ is normalized by $T_1$. Hence $Y_L= C_{Y_LQ} ([Y_L,Q])$ is normalized by $T_1$. Using the fact that $Q$ is weakly closed in any $2$-group which contains it, for $w \in Y_L^\#$, we let $Q_w$ be the unique conjugate of $Q$ in $O_2(C_G(w))$. Then $T_1$ permutes the elements of $Y_L$ and so $T_1$ normalizes $L^\circ  =\langle   Q_w\mid w \in Y_L^\#\rangle$. Since $L=L^\circ Y_L$, we have that $T_1$ normalizes $L$.
On the other hand,
$WY_L$ is normalized by $T_1$ and, as $T_1$ acts fixed-point freely on $W/Z$, $T_1$ acts transitively on $WY_L/Y_L \cong W/[Y_L,Q]\cong 2^2$ and this is impossible as $W \cap O^2(L)$ is a maximal subgroup of $W$ and is normalized by $T_1$.

Hence $|F/Q|= 3$, $N_G(Q)= FS$ and $N_G(Q)/Q\cong \SL_2(2)$.   In particular, $|Q|= 2^6$, $S= Y_LQ$, and $FY_L/W \cong 2 \times \SL_2(2)$. It follows that
$$[W,Q] \text{ is elementary abelian of order }8.$$

et $E= C_S([W,Q])$. As $W$ is normal in $N_G(Q)$, so is $E$. As $|S|= 2^7$ and $|\GL_3(2)|_2=2^3$, we have $|E| \ge 2^4$. Since $F$ acts fixed-point freely on $W/Z$ (being normalized by $Y_L$), we have $E \le Q$ and then  $E$ is normal in $N_G(Q)$. Since $E \cap W= [W,Q]$, we find $|E| = 2^4$. Let $S\le L_1 \le L$ be such that $L_1/Q_L \cong \Sym(4)$ has index $5$ in $L$. Notice that $O_2(L_1)= S_0$. Then $E \le C_{L}([Y_L,Q,Q]) = Y_LS_0$. Also $Y_L \leq S_0$, so $S_0 = Y_LS_0$. Therefore $E \le S_0$. Now $EY_L/Y_L$  acts as a Sylow $2$-subgroup of $\SL_2(4)$ on the natural module. In particular for any involution  $e \in E \setminus Y_L$ we have that $C_{Y_L}(e) = E \cap Y_L$. This implies that all involutions in $EY_L$ are contained in $Y_L \cup E$ and therefore $E$ and $Y_L$ are the only elementary abelian subgroups of $S_0$  of order $2^4$. In particular,  $L_1$ normalizes $E$. Now $N_G(E) \ge \langle L_1,N_G(Q)\rangle \in \mathcal L_G(S)$. Notice that $L_1$ has orbits of lengths $3$, and $12$ on $E$ and that $N_G(Q)$ does not preserve these orbits. Hence $N_G(E)$ acts transitively on $E^\#$. As $N_G(Q)= C_G(Z)$, we now have that $|N_G(E)|=15|N_G(Q)|= 2^7\cdot 3^2 \cdot 5$.  We have that $X = N_G(E)/E$ is isomorphic to a subgroup of $\GL_4(2) \cong \Alt(8)$ of order $2^3\cdot 3^2 \cdot 5$. We consider the action of $X$  on a set of size $8$. As $\Alt(8)$ has no subgroups of order 45,  $X$ is not transitive. Hence $X$ is isomorphic to a subgroup of $\Alt(7)$, $\Sym(6)$ or $X \cong (\Alt(5) \times  3){:}2$. Suppose that $ X \cong (\Alt(5) \times  3){:}2 $. As $N_G(Q)/Q \cong \Sym(4)$, we see that $EQ/E \leq \Alt(5)$. Since $E$ is the natural  $\SL_2(4)$-module, we get that $|Z(Q)| = 4$. But, by \fref{clm:S5-W quad}, $|Z(Q)| = 2$. Hence we have one of the first two possibilities and then obviously $X = N_G(E)/E \cong \Alt(6)$.

We just have to show that $L$ is a maximal 2-local subgroup. Let $M$ be a 2-local subgroup with $L \leq M$. As $Q \leq M$, we have that $M$ is of characteristic 2. Then $Y_L = Y_M$ and  $C_G(Y_L) = Y_L$.  As $|N_G(Q) : S|=3$ and $Y_L$ is not normal in $N_G(Q)$, we have $N_{M}(Q) = S = N_L(Q)$.  As $L$ acts transitively on $Y_L^\#$, we conclude $M = N_M(Q)L = N_L(Q)L = L$.
\qedc

 \begin{claim} If $Y_L = V_L$, then $G$  has just one conjugacy class of involutions.\end{claim}

\medskip

By \fref{clm:structure} $N_G(E)/E \cong \Alt(6)$.  As $Y_L \not\leq E$, there is an involution $y \in Y_L \setminus E$. Now $y$ inverts an element of order 5 in $N_G(E)$ and so $|[E,y]| = |C_E(y)| = 4$. This shows that all involutions in $Ey$ are conjugate. As all involutions in $S/E$ are conjugate in $\Alt(6)$ and all the involutions in $Y_L$ are $L$-conjugate, this proves the claim.\qedc

We have now  proved that  (i) holds when $Y_L = V_L$.

\begin{claim}\label{clm:56}
Suppose that $Y_L> V_L$.  Then  $G$ has a subgroup of index $2$.
\end{claim}

\medskip

We have that $|S|= 2^8$.  By \fref{clm:S5-V_LnotY_L} and \fref{clm:Q_L=Y_L},  $S=QY_L$.  We are going to show that $J(S) = Y_L$.
For this let $A \leq S$ be elementary abelian of maximal order and assume that $A \ne Y_L$. Then $|AY_L/Y_L|  \leq 4$. As there are no transvections on $V_L$, we get $|AY_L/Y_L| = 4$ and we may assume that $A$ acts quadratically on $Y_L$ by \cite[Theorem 25.2]{GoLyS2}.   As $W \not\leq S_0$ by \fref{lem:not base} and $|\ov W| = 4$ by \fref{clm:S5-W quad},  $W$ does not act quadratically on $Y_L$, $AY_L/Y_L \le S_0/Y_L$ and $S_0 = AY_L$. Now $A \cap Y_L$ has order 8 and so $|C_{Y_L}(S_0)| = 8$. But $(L^\circ)^\prime$ is generated by two conjugates of $S_0$, which gives $C_{Y_L}(L^\circ) \not= 1$ a contradiction to  \fref{lem:VL sub QL'}(i).
Thus $Y_L= J(S)$ is the Thompson subgroup of $S$. In particular, $N_G(Y_L)$ controls $G$-fusion of elements in $Y_L$.
  As $S \in \syl_2(G)$ and $C_S(Y_L) = Q_L$,  $Q_L \in \syl_2(C_G(Y_L))$ and we have $N_G(Y_L) = C_G(Y_L)N_{N_G(Y_L)}(Q_L)$. By \fref{lem:fusioncontrol} $$V_L\text{ is normal in }N_G(Y_L).$$
Suppose that $O^2(L) \ge Y_L$.  Then $O^2(L)/V_L \cong \SL_2(5)$ has quaternion Sylow $2$-subgroups and $|L:O^2(L)|=2$. On the other hand,
there exists  $g \in N_G(Q) \setminus N_G(Y_L)$ with $WY_L \ge (Y_L^g\cap Q)Y_L\ne Y_L$ and $(Y_L^g\cap Q)V_L/V_L$ is elementary abelian, which is a contradiction.  Therefore $O^2(L)/V_L\cong \SL_2(4)$ and,  as $W$ does not act quadratically on $Y_L$, we see that $|W : W \cap O^2(L)| = 2$ and thus  $O^2(L)W/V_L \cong \Gamma\SL_2(4)$.  Hence $L$ has a subgroup $L_0=O^2(L)W$ of index $2$ with $Y_L \cap L_0 = V_L$.

 Let $T \in \syl_2(L_0)$ and  $w \in Y_L \setminus T$.  Suppose that for some $x \in G$,  $w^x \in T$  and $|C_{S}(w^x)| \ge |C_S(w)|$. As $L^\circ$ has orbits of length 6 and 10 on $Y_L \setminus V_L$, we may assume $|C_{S}(w^x)| \geq |S|/2$. But then as $V_L$ is the natural module, it does not admit transvections and so $w^x \in V_L$. As $N_G(Y_L) = N_G(V_L)$ and $N_G(Y_L)$ controls fusion in $Y_L$, this is not possible. Hence the supposed condition cannot hold. Application of \cite[Proposition 15.15]{GoLyS2}, shows that $G$ has a subgroup  of index $2$. This proves \fref{clm:56}.\qedc
\medskip

Let $G_0$ be a subgroup of $G$ of index $2$, and set $Q_0= Q \cap G_0$. We have $V_L \le L^\circ \le G_0$. Hence $W=\langle [V_L,Q]^{D }\rangle \le G_0$.  In particular, $W \le Q_0$ and so  $Z(Q_0) = Z$ and   $Q_0$ is large in $G_0$. Set $L_0= O^2(L)Q_0 = O^2(L)W$. Then $L_0^\circ /V_L \cong \Gamma \SL_2(4)$   and $Y_{L_0}= V_{L_0} = V_L \not \le Q_0$. Thus $(G_0,L_0)$ satisfies the hypotheses of (i). This proves (ii) holds if $V_L \not= Y_L$.
\end{proof}

\begin{proof}[Proof of \fref{prop:S5-1}:] By \fref{lem:S5-2} we just have to examine  the structure in \fref{lem:S5-2}(i), so we may assume that \fref{lem:S5-2}(i) holds.
\\
\\
 By \fref{lem:splitA6}
$$N_G(E)\text{ splits over }E.$$

As $N_G(Q) \leq N_G(E)$,   for a 2-central involution $z$ we have that $C_G(z)$ is a split extension of $E$ by $\Sym(4)$. As $O(C_G(z)) = 1$   coprime action yields $O(G) = \langle C_{O(G)}(e)\mid e \in E^\# \rangle = 1$.  In particular $F(G) = 1$ and $E(G)\ne 1$. Suppose that $J^*$ is a non-trivial subnormal subgroup of $G$ normalized by $\langle L, N_G(E)\rangle$.  Then $S \cap J^* \ne 1$.
Since  $1 \ne J^* \cap N_G(E)$ is   normal   in $N_G(E)$ and  $1\ne J^* \cap L$ is normal in $L$, it follows that $J^* \cap N_G(E)\ge J^* \cap S \ge EY_L$. Hence  $J^* \ge \langle Y_L^{N_G(E)} \rangle = N_G(E) \ge S$ and $J^* \ge \langle S^L\rangle =L$. Therefore there is a unique non-trivial subnormal  subgroup of $G$ of minimal order normalized by $\langle L, N_G(E)\rangle$. It follows that $\langle L, N_G(E)\rangle$ is contained in a component $J$ of $G$. Since $O(G)=1$ and $S \le J$, $J= E(G)$.
  As $J$ has just one conjugacy class of involutions by \fref{lem:S5-2}(i) and, for $z \in E^\# $, $ C_G(z) \le N_G(E)$, it follows that $G=J$ is simple. Using  $G$ has just one conjugacy class of involutions and applying \cite[Theorem]{Janko} yields $G \cong \Mat(22)$. This proves the proposition when \fref{lem:S5-2}(i) holds. If \fref{lem:S5-2}(ii) holds, then $G \cong \Aut(\Mat(22))$.
\end{proof}

\section{$\ov {L^\circ} \cong \SL_2(4)$}\label{sec:L24}

In this section we investigate  the configuration in \fref{prop:unambiguous}(iii). Thus  $\ov {L^\circ} \cong \SL_2(4)$, $|Y_L : V_L| = 2$ and $V_L$ is the natural $\SL_2(4)$-module.

As $Q \leq L^\circ$, $C_{V_L}(S_0) = C_{V_L}(Q) \leq Z(Q)$, so $Q$ is normal in $N_{L^\circ}(C_{V_L}(S_0))$ and hence  $\ov Q= \ov{S_0}$ is a Sylow 2-subgroup of $\ov{L^\circ}$. In particular $Z(Q) \cap Y_L = Z(Q) \cap V_L$ is of order 4.

\begin{lemma}\label{lem:Qabel} The subgroup $Q$ is elementary abelian. In particular, $Q \cap Y_L = Q \cap V_L = C_{Y_L}(Q)=Z$,  $|Y_LQ/Q|=2^3$ and $|V_LQ/Q|=2^2$.
\end{lemma}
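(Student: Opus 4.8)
The plan is to combine the behaviour of a Sylow $2$-subgroup on the natural $\SL_2(4)$-module with the structural lemmas of Section~2 (in particular \fref{lem:dualchieffactors}, \fref{lem:isochieffactors} and \fref{lem: cent W/[W,Q]}), working throughout inside $N_G(Q)$. First I would record the local picture. Since $V_L$ is the natural $\SL_2(4)$-module and $\ov Q=\ov{S_0}$ is a Sylow $2$-subgroup of $\ov{L^\circ}\cong\SL_2(4)$, the usual computation gives $[V_L,Q]=C_{V_L}(Q)=Z$ with $|Z|=4$. As $Z=C_{V_L}(Q)\le C_G(Q)=Z(Q)$ we obtain $Z\le Z(Q)\le Q$, so $[V_L,Q]\le Q$; writing $v^{-1}qv=q[q,v]$ with $[q,v]\in[V_L,Q]\le Q$ this forces $V_L\le N_G(Q)$, hence $D\le N_G(Q)$ and $W=\langle(V_L\cap Q)^{D}\rangle\le Q$. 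Thus $V_L$ and $Q$ normalise one another, and since $[V_L,Q,Q]=[Z,Q]=1$ and $V_L$ is abelian, they act quadratically on each other. Finally, with $K=\langle V_L^{N_G(Q)}\rangle\trianglelefteq N_G(Q)$, a commutator computation gives $[K,Q]\le\langle Z^{N_G(Q)}\rangle\le Z(Q)$, so $K$ acts quadratically on $Q$ with $[K,Q]$ central in $Q$; and $K$ is not a $2$-group, since it contains $\langle V_L^{D}\rangle$, which is not a $2$-group by \fref{lem:w1}(i).

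Next I would show $Q$ is abelian, arguing by contradiction: suppose $Q'\ne1$. Put $X=N_G(Q)$, which has characteristic $2$ with $O_2(X)=Q$. The aim is to produce an $X$-invariant $2$-subgroup $U$ with $W\langle Z^{X}\rangle\le U\le Q$ (for instance a suitable term built from $\langle Z^{X}\rangle$ and $[Q,Q]$) such that $[O_2(X),O^2(K)]=[Q,O^2(K)]\le U$ and $K\le C_X(U/[U,Q])$, and then to apply \fref{lem: cent W/[W,Q]} to conclude $K\le O_2(X)=Q$, whence $V_L\le Q$, contradicting $V_L\not\le Q$. The first inclusion is immediate from $[K,Q]\le Z(Q)$; the delicate point is the condition $K\le C_X(U/[U,Q])$, and this is where the quadratic action is decisive: because $K$ is quadratic on $Q$ and $V_L$ is the natural $\SL_2(4)$-module, \fref{lem:dualchieffactors} together with \fref{lem:isochieffactors}, applied to the chief factors of $Q/[Q,Q]$ (or of an appropriate section between $U$ and $Q$), shows that no non-central $X$-chief factor there is isomorphic to $V_L$, which is exactly the input needed to push $K$ into the centraliser. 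Hence $Q'=1$. The same circle of ideas, now with $\Phi(Q)$ in place of $Q'$ and using $\mho^1(Q)\le C_Q(V_L)$ together with the fact that $\ov Q$ has exponent $2$, gives $\Phi(Q)=1$, so $Q$ is elementary abelian.

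Once $Q$ is elementary abelian, $Q=Z(Q)=C_G(Q)$, so $V_L\cap Q=C_{V_L}(Q)$ and $Y_L\cap Q=C_{Y_L}(Q)=Z(Q)\cap Y_L=Z(Q)\cap V_L=Z$, which has order $4$ by the remark preceding the lemma; hence $|Y_LQ/Q|=|Y_L|/|Z|=2^3$ and $|V_LQ/Q|=|V_L|/|Z|=2^2$, as asserted. I expect the real obstacle to be Step~2: pinning down the precise $X$-invariant subgroup $U$ and verifying the centralising hypothesis of \fref{lem: cent W/[W,Q]}, since that is where the quadratic action of $K$ on $Q$ has to be converted into the statement that $K$ cannot live outside $O_2(N_G(Q))$; the module warm-up of Step~1 and the counting in Step~3 are routine by comparison.
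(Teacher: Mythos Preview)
Your Step~1 and Step~3 are fine, but Step~2 has a genuine gap that you yourself flag: you never produce a concrete $U$ for which \fref{lem: cent W/[W,Q]} applies, and the obvious candidates fail. If $U\le Z(Q)$ (say $U=\langle Z^{X}\rangle$ or $U=[K,Q]$) then $[U,Q]=1$ and the hypothesis becomes $[K,U]=1$, i.e.\ $[K,Q,K]=1$; but for $g,h\in X$ one only gets $[Z^g,V_L^h]\le Z^h$, not $=1$, so there is no reason $K$ centralizes $[K,Q]$. If instead $U=Q$, you need $[K,Q]\le Q'$, which you do not know. Your appeals to \fref{lem:isochieffactors} and \fref{lem:dualchieffactors} are also misaligned: those lemmas compare $X$-chief factors, and $V_L$ is an $L$-module, not an $N_G(Q)$-module, so the phrase ``no non-central $X$-chief factor is isomorphic to $V_L$'' has no content here.

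The paper's argument is far more direct and bypasses all of Section~2. Since $\ov Q$ is elementary abelian, $\Phi(Q)\le Q_L$. If $\Phi(Q)\ne 1$ then $\Phi(Q)\cap\Omega_1(Z(S))\ne 1$, and as $\Omega_1(Z(S))\le V_L$ by \fref{lem:VL sub QL'}(ii) we get $\Phi(Q)\cap V_L\ne 1$. Now the key observation you are missing: $N_L(QQ_L)$ normalizes $Q$ (weak closure of $Q$) and contains an element of order~$3$ acting irreducibly on the fours group $[V_L,Q]$; since it also normalizes $\Phi(Q)\cap V_L\le Q\cap V_L=[V_L,Q]$, this forces $[V_L,Q]\le\Phi(Q)$. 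Hence $V_L$ centralizes $Q/\Phi(Q)$, and as $C_{N_G(Q)}(Q/\Phi(Q))$ is a normal $2$-subgroup of $N_G(Q)$ (Burnside), $V_L\le O_2(N_G(Q))=Q$, a contradiction. Thus $\Phi(Q)=1$ in one stroke, without a separate $Q'=1$ step. Your instinct---that $V_L$ should stabilize a series in $Q$ and hence fall into $Q$---is exactly right; the missing ingredient is the order-$3$ element of the Borel, which hands you the series $Q>\Phi(Q)>1$ for free.
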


\begin{proof} We know that $[Q,V_L] = C_{V_L}(Q)= Q\cap V_L$ and, as $\ov Q$ is elementary abelian, $\Phi(Q) \le Q_L$.  If $\Phi(Q) \not= 1$, then, since $Z(S) \cap \Phi(Q) \not= 1$, we deduce $\Phi(Q) \cap V_L \not= 1$. As $N_L(QQ_L)$ normalizes $Q$ and is irreducible on $[V_L,Q]$, $[V_L,Q] \leq \Phi(Q)$.  But then $V_L$ centralizes $Q/\Phi (Q)$, so $V_L \leq O_p(N_G(Q))= Q$, a contradiction. This shows $Q$ is elementary abelian and then also $Y_L \cap Q = V_L \cap Q= C_{Y_L}(Q)$.
\end{proof}

\begin{proposition}\label{prop:L24} Suppose  $L \in \mathcal{L}_G(S)$ and $L \not \le N_G(Q)$ with  $\ov L$ in the unambiguous wreath product case.  If $Y_L \not \le Q$,  $\ov {L^\circ} \cong \SL_2(4)$ and $|Y_L:V_L|=2$, then $G$ is  $\Aut(\Mat(22))$.
\end{proposition}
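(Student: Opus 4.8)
The plan is to show that this configuration --- $\ov{L^\circ}\cong\SL_2(4)$ with $|Y_L:V_L|=2$, $Q$ elementary abelian, $\ov Q=\ov{S_0}\cong\Dih(8)$ --- forces $G$ to be a very specific group, and that group turns out to be $\Aut(\Mat(22))$. The strategy mirrors the proof of \fref{prop:S5-1}: first pin down the $2$-local structure around $Q$ and around $L$, then identify $G$ via a recognition theorem or by locating a subgroup of index $2$ and reducing to \fref{prop:S5-1}. Since here $\ov{L^\circ}$ is only $\SL_2(4)$ rather than $\Gamma\SL_2(4)$, I expect that $\ov L$ does \emph{not} contain the field automorphism, so $L$ itself cannot already look like the $\Mat(22)$-configuration; instead the field automorphism must be supplied by $N_G(Q)$ (or by a larger $2$-local), and the whole point is to build up $\langle L, N_G(Q)\rangle$ until a $\Gamma\SL_2(4)$-configuration --- i.e. the hypotheses of \fref{lem:S5-2}(i) inside some index-$2$ overgroup, or directly the $\Mat(22)$-fusion pattern for $\Aut(\Mat(22))$ --- emerges.

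First I would use \fref{lem:w1} together with \fref{lem:not base} (which applies since $\ov{L^\circ}\cong\SL_2(4)$ is not the full $\Gamma\SL_2(4)$, but $\ov Q=\ov{S_0}$, so one must check the hypotheses of \fref{lem:not base} carefully --- in fact here $|\mathcal K|=1$ with \emph{no} field automorphism, so \fref{lem:not base} does \emph{not} directly apply and $W$ might lie in $S_0$). This is the crux: I would first determine whether $W\le S_0$. If $W\le S_0$ then, exactly as in the proof of \fref{lem:not base}, $V_L$ stabilizes a short normal series in $D$ and hence $\langle V_L^D\rangle$ is a $2$-group, contradicting \fref{lem:w1}(i); so in fact $W\not\le S_0$ must still hold, and the argument is that $D$ provides a $G$-conjugate $V_{L^g}\cap Q$ of $V_L\cap Q$ lying outside $S_0$ and not normalizing $K=O^2(L^\circ)$. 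Once $W\not\le S_0$, I would run the now-standard computation (as before \fref{clm:qlyl} and in \fref{clm:W4}, \fref{clm:S5-W quad}) to get $|\ov W|=4$, $\ov W\not\le\ov{S_0}$, $[W,W]=Z$, $[Q_L,W,W]\le Y_L$, and then, using \fref{lem:VL sub QL'}(iii), \fref{clm:S5-YMnotQM}-style arguments, \fref{lem:Sym5-modules}, \fref{lem:dualchieffactors} and \fref{lem: U>YL abelian}, force $Q_L=Y_L$. With $Q_L=Y_L$, $|Y_L|=2^5$, and $\ov{S}=\ov Q\cong\Dih(8)$ (since $\ov Q=\ov{S_0}$ is already all of $\ov S$ because $L\not\le N_G(Q)$ plus $\ov L\cong\Sym(5)$ forces $\ov S\cong\Dih(8)$), we get $|S|=2^8$.

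Next I would analyse $N_G(Q)$: here $Q$ is elementary abelian of rank $6$ by \fref{lem:Qabel}, $W$ is an extraspecial (or near-extraspecial) subgroup with $W/Z$ a quadratic-type module, and I would identify $W/Z=J(Q/Z)$, so $W\trianglelefteq N_G(Q)$ and $N_G(Q)/Z$ embeds in $\Aut(W)$. Since $\ov{L^\circ}\cong\SL_2(4)$ lacks the field automorphism but $\Mat(22)$ and $\Aut(\Mat(22))$ need it somewhere, $N_G(Q)/Q$ must be strictly larger than the $\SL_2(2)$ appearing in \fref{lem:S5-2}(i) --- I would expect $N_G(Q)/Q$ to involve $\Gamma\SL_2(4)\cong\Sym(5)$ acting on $Q$, with $V_L Q/Q$ the natural $\SL_2(4)$-module. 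Then $\langle L, N_G(Q)\rangle$ contains the subgroup $E$ (of order $2^4$) with $N_G(E)/E$ a section of $\GL_4(2)\cong\Alt(8)$ of the appropriate order, and I would show $N_G(E)/E\cong\Alt(6)$ exactly as in \fref{clm:structure}. The difference from \fref{prop:S5-1} is that now $L$ is \emph{not} maximal: it sits inside an overgroup which \emph{does} realize the $\Gamma\SL_2(4)$-configuration. Concretely I would produce a $2$-local subgroup $\widehat L\supsetneq L$ with $\widehat L^\circ/V_L\cong\Gamma\SL_2(4)$ and $Y_{\widehat L}=V_{\widehat L}$, so that $(G,\widehat L)$ satisfies the hypotheses of \fref{prop:S5-1}; that proposition then gives $G\cong\Mat(22)$ or $\Aut(\Mat(22))$, and the presence of the $\SL_2(4)$ (not $\Gamma\SL_2(4)$) configuration for $L$ itself --- i.e.\ the existence of an outer element realizing the field automorphism --- forces $G\cong\Aut(\Mat(22))$.

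The main obstacle I anticipate is the first step: establishing $W\not\le S_0$ without the help of \fref{lem:not base} (whose hypotheses are not met here since $s=1$ and there is no field automorphism in $\ov{L^\circ}$). I would handle this by the $D$-conjugacy argument sketched above --- choosing $g\in D$ with $V_{L^g}\cap Q\not\le C_L(Y_L)$ via \fref{lem:w1}, checking that $\ov{V_{L^g}\cap Q}$ cannot lie in $\ov{S_0}$ (otherwise a fusion/counting contradiction of the type in \fref{clm:meet base} or the ``stabilizes a normal series'' contradiction of \fref{lem:not base} applies), and thereby locating the quadratic fours group $\ov W$ outside $\ov{S_0}$. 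A secondary difficulty is organizing the transition to the overgroup $\widehat L$ so that \fref{prop:S5-1} can be invoked cleanly; but since both $\Mat(22)$ and $\Aut(\Mat(22))$ are the only outcomes of \fref{prop:S5-1} and $\Mat(22)$'s relevant $2$-local is of $\Gamma\SL_2(4)$-type (so it would have put $L$ in case (ii) of \fref{prop:unambiguous}, not case (iii)), the $\SL_2(4)$ hypothesis of the present proposition selects $\Aut(\Mat(22))$, and the proof concludes by combining this with \fref{lem:fusioncontrol}- and \fref{lem:splitA6}-style inputs carried over from the previous section.
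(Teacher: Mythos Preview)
Your proposal rests on several structural misreadings and, as a result, the whole plan collapses at the first step. In this section $\ov{L^\circ}\cong\SL_2(4)\cong\Alt(5)$, and the paragraph before \fref{lem:Qabel} shows $\ov Q=\ov{S_0}$ is a Sylow $2$-subgroup of $\ov{L^\circ}$; thus $\ov Q$ is elementary abelian of order $4$, not $\Dih(8)$. In particular $W\le Q$ forces $\ov W\le\ov Q=\ov{S_0}$, so your goal ``$W\not\le S_0$'' is simply false here and the entire $W$-quadratic machinery of Sections 4--5 (\fref{clm:W4}, \fref{clm:S5-W quad}, the $[W,W]=Z$ argument, and the deduction $Q_L=Y_L$ via \fref{lem:[WO_p][W^mO_p]}) is unavailable. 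Likewise $Q$ does not have rank $6$: the paper eventually obtains $|Q|=2^4$, and your expectation that $N_G(Q)/Q$ involves $\Sym(5)$ is off --- the outcome is $N_G(Q)/Q\cong\Sym(6)\cong\Sp_4(2)$.

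The key idea you are missing is that the situation here is governed by \emph{transvections}, not by the $W$-analysis. Since $Q$ is elementary abelian (\fref{lem:Qabel}) and $V_L$ is the natural $\SL_2(4)$-module, \fref{clm:1}(ii) gives the ``solitary'' condition $C_Q(v)=C_Q(V_L)$ for all $v\in V_L\setminus Q$, so \fref{lem:soldual} forces $V_LQ/Q$ into a component $K/Q$ of $N_G(Q)/Q$. The three involutions of $QQ_L/Q_L$ each centralize a hyperplane of $Y_L$, so $Y_LQ/Q$ is generated by three $\GF(2)$-transvections on $Q$, permuted transitively by the Borel $B$. McLaughlin's theorem \cite{McL} then classifies $KY_L/Q$, and elimination of the linear, orthogonal and symmetric cases leaves $KY_L/Q\cong\Sp_{2m}(2)$ with $[Q,K]$ the natural module. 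From this one reads off $S/Q_L\cong\Dih(8)$ (so the ``extra'' involution you wanted comes from $N_G(Q)$, not from $\ov L$), shows $Q_L=Y_L$ by a module argument using that $\beta\in K$ of order $3$ centralizes $(V\cap Q_L)Y_L/Y_L$, and concludes $|Q|=2^4$, $N_G(Q)/Q\cong\Sym(6)$. The endgame does not reduce to \fref{prop:S5-1}: one produces an index-$2$ subgroup $G_1$ via Thompson transfer \cite[Proposition 15.15]{GoLyS2}, uses \fref{lem:splitA6} to split $N_{G_1}(Q)\cong 2^4{:}\Alt(6)$, and applies Janko's characterization \cite{Janko} directly to $G_1$ to get $F^*(G_1)\cong\Mat(22)$, hence $G\cong\Aut(\Mat(22))$.
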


\begin{proof} We start by observing that the action of $L$ on $Y_L$ gives

\begin{claim} \label{clm:1}\begin{enumerate} \item $|V_LQ/Q|=|Q:C_Q(V_L)|=2^2$;
 \item for all $v \in V_L \setminus Q$, $C_Q(v) = C_Q(V_L)$; and
 \item  for all $w \in Q \setminus Q_L$,  $[w,V_L] = [Q,V_L]$. \end{enumerate}
 \end{claim}
\medskip

Let $B= N_L(QQ_L)$. Then $B $ contains an element $\beta$ of order $3$ which acts fixed-point freely on $V_L$ and irreducibly on $[V_L,Q]=C_{Y_L}(Q)$.

 Using \fref{clm:1} (ii) and \fref{lem:soldual} yields $[V_L,F(N_G(Q)/Q)] = 1$.
  Let $K\ge Q$ be the preimage of $$ [E(N_G(Q)/Q),V_LQ/Q].$$  Then $K$ is non-trivial,  normalized by $B$ and \fref{lem:soldual} implies $V_LQ/Q$ acts faithfully on $K/Q$.

 The three involutions of $QQ_L/Q_L$ each centralize a subgroup of $Y_L$ of order $2^3$ and by \fref{lem:VL sub QL'}(i), there are three elements of $Y_LQ/Q$ which act on $Q$ as $\GF(2)$-transvections, they generate $Y_LQ/Q$ and are permuted transitively by $B/Q$. As $B$ normalizes $K$ and as $V_LQ/Q$ acts faithfully on $K/Q$, at least one and hence all of the transvections in $Y_L Q/Q$ act faithfully  on $K/Q$.

 If  $C_Q(K) \ne 1$, then $C_{C_{Q}(K)}(S) \ne 1$.  As $\Omega_1(Z(S))= C_{V_L}(S)$ by \fref{lem:VL sub QL'} (ii), and $C_Q(K)$ is normalized by $B$, we have $[Q, V_L] \le C_Q(K)$. But then $K=\langle V_L^K\rangle Q$ centralizes $Q/C_Q(K)$ contrary to $C_K(Q)=Q$. Hence $C_Q(K)=1$.

 Let $V$ be a non-trivial minimal $KY_L$-invariant subgroup of $Q$.
 Then $KY_L$ acts irreducibly on $V$. Moreover,  as $Y_L$ does not centralize $V$, $V \not \le Q_L$ and, as $V_L$ is the natural $\ov{L^\circ}$-module we have  $[Y_L,V]= [Y_L,Q] = Y_L \cap Q \le V$.  It follows that $K$ centralizes $Q/V$ and so $K/Q$ acts faithfully on $V=[Q,K]$ which is normalized by $B$. Hence $C_{Y_L}(V)= Y_L \cap V= Y_L \cap Q$ and  $Y_LQ/Q$ acts faithfully on $V$. Recall that  $ Y_LQ/Q$ is generated by elements which operate as transvections on $Q$ and hence on $V$. Therefore  \cite[Theorem]{McL} applies to give $KY_L/Q \cong \SL_m(2)$ with $m \ge 3 $, $\Sp_{2m}(2)$ with $m \ge 2$, $\OO_{2m}^\pm(2)$   with $m \ge 2$, or $\Sym(m)$ with $m \ge 7$. Furthermore, $V=[Q,K]$ is the natural module in each case.

 Since $C_{Y_LQ/Q}(S/Q)$ contains a transvection and has order $2^2$, $KY_L/Q \not \cong \SL_m(2)$ with $m \ge 3$ or $\OO_{2m}^\pm(2)$ with $m \ge 2$.  Suppose that $KY_L/Q \cong \Sym(m)$ with $m \ge 7$.  Then, as $Y_LQ/Q$ is generated by three transvections, we see that $Y_LQ/Q$ is generated by three commuting transpositions  in $KY_L/Q$.  Let $t$ be the product of these transpositions.  Then, as $m \ge 7$, $|[V,t]| = 2^3$.  However, $|[V,Y_L]|=2^2$, and so we have a contradiction.  We have demonstrated

\begin{claim}\label{clm:2} $KY_L/Q \cong \Sp_{2m}(2)$, $m \geq 2$ and  $[Q,K]=[Q,K Y_L]$ is the natural module.\end{claim}
\medskip

Since $[Q,K]$ is the natural $KY_L /Q$-module and $[V_L,Q] \le [Q,K]$ has order $2^2$, we have $[[V_L,Q],S]\ne 1$.  In particular, $QQ_L/Q_L < S/Q_L \cong \Dih(8)$ and $SQ/Q\cap K/Q$ acts non-trivially on $[Q,V_L]$.

 Consider $Q^*=O_2(KS)$.  Since $Q^*$ centralizes $[Q,K]$, $Q^*$ centralizes $[V_L,Q]$ and so $Q^*Q_L= QQ_L$.  Hence $\Phi(Q^*)\le Q_L$. If $\Phi(Q^*) \ne 1$, then $$[Q,K]=\langle \Omega_1(Z(S))^{K}\rangle \le \Phi(Q^*)$$ and so also $[Q^*,K]=[Q^*,K,K] \le [Q,K] \le \Phi(Q^*)$ which is impossible.  Hence $Q^* $ is elementary abelian and it follows that $Q\le Q^*= C_{Q^*}(Q) \le Q$. Since $KS$ acts on $[Q,K]$ and $KY_L/Q \cong \Sp_{2m}(2)$, we now deduce $S \le KY_L$ from the structure of $\Out(K/Q)$.  Hence $ B= \langle S^B\rangle \le KY_L$ as $B$ normalizes $KV_L$. It follows that $B/Q$ is  the minimal parabolic subgroup $P$ of $K/Q$ irreducible on $[Y_L,V]$ and with $O^2(P)$ centralizing $[Y_L,V]^\perp/[Y_L,V] = C_{Y_L}(V)/[Y_L,V]$. Therefore there is $\beta \in K$ of order three such that $\langle \beta\rangle$ is transitive on the transvections in $Y_LQ/Q$ and normalizes $Q_LQ/Q$ which has index $2$ in $S/Q$. In particular, from the structure of the natural $\Sp_{2m}(2)$-module $\beta$  centralizes $$C_V(Y_L)/[V,Y_L]=(V \cap Q_L)/(V\cap Y_L)= (V\cap Q_L)Y_L/Y_L \leq  [Q_L,V]Y_L/Y_L.$$

   As $V$ is abelian, $V$  acts quadratically on $Q_L/V_L$. By \fref{lem:Sym5-modules}, $Q_L/V_L$ involves only  natural $\SL_2(4)$-modules and trivial modules as $L$-chief factors. We know  $\beta$ acts fixed-point freely on the natural module and so, as $\beta$  centralizes $[Q_L,V]Y_L/Y_L$,   all the $L$-chief factors of $Q_L/V_L$ are centralized by $L$.  In particular,  $V_L$ is the unique non-central $L$-chief factor in $Q$ and so $Y_L \cap \Phi(Q_L) = 1$. As $\Omega_1(Z(S)) \le V_L$ by \fref{lem:VL sub QL'} (ii),  $\Phi(Q_L) = 1$, so $Q_L = \Omega_1(Z(Q_L)) = Y_L$, which together with  $S/Q_L \cong \Dih(8)$ implies

\begin{claim}\label{clm:3} $Y_L = Q_L$ has order $2^5$ and $|S|=2^8$.\end{claim}
\medskip

Together \ref{clm:2} and \ref{clm:3} give

\begin{claim}\label{clm:4}$|Q| = 2^4$ and $N_G(Q)/Q \cong \Sym(6)$.\end{claim}
\medskip

We next show that $G$ has a subgroup of index two.  In $N_G(Q)$ we have a subgroup $U$ of index 2 of shape $2^4.\Alt(6)$. Furthermore $Y_L \not\leq U$ and $V_L \le U$. Since $[v,Q]= C_Q(v)$ for $v \in V_L\setminus Q$ and $U/Q$ has one conjugacy class of involutions, all the involutions in $U\setminus Q$ are $U$-conjugate. Since $L$ acts transitively on $V_L$  and $U$ is transitive on  $Q^\#$, we have that all the involutions in $U$ are $G$-conjugate. As $Q$ is large, we have $C_G(z) \le N_G(Q)$ for $z \in Q^\#$.  Hence all the  involutions in $U$ have centralizer which is a $\{2,3\}$-group. There is an involution $t$  in $Y_L \setminus V_L$, which is not in $U$ and centralized by an element of order 5 in $L$. Hence $t$ is not conjugate to any involution of $U$. Application of \cite[Proposition 15.15]{GoLyS2} gives a subgroup  $G_1$ of index two in $G$. We have $N_{G_1}(Q)/Q \cong \Alt(6)$. By \fref{lem:splitA6} this extension splits and we have that the centralizer of a $2$-central  involution $z \in G_1$ is a split extension of an elementary abelian group of order 16 by $\Sym(4)$. In particular $O(C_G(z)) = 1$ and so by coprime action $O(G) = \langle C_{O(G)}(e)\mid  e \in Q^\# \rangle = 1$. As $Y_L \not\le Q$, there is an involution $y \in  N_{G_1}(Q) \setminus Q$. Since all involutions in $Qy$ and in $N_{G_1}(Q)/Q$ are conjugate,   $G_1$ has just one conjugacy
class of involutions. In particular $F^\ast(G_1)$ is simple. Application of \cite[Theorem]{Janko} gives that $F^\ast(G_1) \cong \Mat(22)$ and so $G \cong \Aut(\Mat(22))$.
\end{proof}

\section*{Acknowledgment}
We thank the referee for numerous comments which have improved the readability and clarity of our work.
The second author was partially supported by the DFG.


\begin{thebibliography}{ABCD}
\bibitem[GH]{GoHa} D. Gorenstein, K. Harada, On finite groups with Sylow $2$-subgroups of type
$A_n$, $n = 8$, $9$, $10$, $11$, Math. Z., 117 (1970), 207--238.
\bibitem[GLS2]{GoLyS2} D. Gorenstein, R. Lyons, R. Solomon, The classification
of the finite simple groups, Amer. Math. Soc. Surveys and Monographs 40(2), (1996).
\bibitem[J]{Janko} Z. Janko, A characterization of the Mathieu simple groups, I, J. Algebra 9, 1968, 1--19.
\bibitem[Mas]{Mas} D. Mason, Finite simple groups with Sylow $2$-subgroup dihedral wreath $Z_2$,
J. Algebra 26, (1973), 10--68.

\bibitem[MSS1]{ov} U. Meierfrankenfeld, B. Stellmacher, G. Stroth, Finite groups of local characteristic p:
an overview in {\em  Groups, combinatorics and geometry}, Durham 2001 (eds. A. Ivanov, M. Liebeck, J. Saxl),
Cambridge Univ. Press, 155--191.
\bibitem[MSS2]{stru} U. Meierfrankenfeld, B. Stellmacher
and G. Stroth,  {\em The local structure theorem for finite groups with a large $p$-subgroup}, Mem.  Amer. Math. Soc. 242, Nr. 1147 (2016).
\bibitem[McL]{McL}J. McLaughlin, Some subgroups of $\SL_n(F_2)$, Illinois J. Math 13, 1969, 105-115.
    \bibitem[PPS]{PPS} Chr. Parker,  G. Parmeggiani, B. Stellmacher,  The $P$!-Theorem, Journal of Algebra 263 (2003), 17--58.
\bibitem[Pr]{Prince} A. R. Prince, On 2-groups admitting $A_5$ or $A_6$ with an element of
 order 5 acting fixed   point freely.
 J. Algebra  49  (1977),  no. 2, 374--386.
\end{thebibliography}
\end{document}